\setlist{nolistsep}
\newcommand{\grad}{\nabla}
\newcommand*{\IE}{\mathbb{E}}
\newcommand*{\IR}{\mathbb{R}}
\newcommand*{\IRi}{\IR\cup\{+\infty\}}
\newcommand*{\IN}{\mathbb{N}}
\newcommand*{\states}{\mathcal{E}}
\renewcommand*{\marks}{\mathcal{S}}
\newcommand*{\confs}{\mathcal{M}}
\newcommand*{\x}{\mathsfbf{x}}
\newcommand*{\y}{\mathsfbf{y}}
\newcommand{\abs}[1]{\lvert#1\rvert}
\newcommand{\norm}[1]{\lVert #1\rVert}
\newcommand{\refmark}{\mathbf{R}}
\newcommand{\tg}{\textswab}
\newcommand{\msb}{\mathsfbf}
\newcommand*{\Energy}{H}
\newcommand*{\PairEnergy}{E_\Phi}
\newcommand{\tempered}{\text{\normalfont temp}}
\newcommand*{\Gibbs}{\mathcal{G}_{z,\beta}(\Energy)}
\newcommand*{\GibbsT}{\mathcal{G}^{\tempered}_{z,\beta}(\Energy)}
\newcommand{\CondEn}[2]{E_\Phi\left(#1\,\vert\, #2\right)}
\newcommand*{\Banachz}{\mathbb{X}_{\constanz}}
\newcommand*{\Banach}{\mathbb{X}_{\constan}}
\newcommand*{\K}{\msb{K}}
\DeclareSymbolFont{CMletters}{OML}{cmm}{m}{it}
\DeclareMathSymbol{\xi}{\mathord}{CMletters}{"18}
\renewcommand*{\Xi}{\varXi}
\renewcommand*{\epsilon}{\varepsilon}
\renewcommand*{\theta}{\vartheta}
\renewcommand*{\Theta}{\varTheta}
\renewcommand*{\Lambda}{\varLambda}
\renewcommand*{\Delta}{\varDelta}
\newcommand*{\1}{\mathds{1}}
\newcommand{\hyp}[1]{$(\mathcal{H}_{\text{#1}})$}
\newcommand{\corr}[2]{\rho_{#1}^{(#2)}}
\newcommand*{\RuelleC}{\mathscr{C}(\beta)}
\newcommand*{\constanz}{\msb{c}_z}
\newcommand*{\constan}{\msb{c}}
\newcommand*{\zRuelle}{\tg{z}_{Ru}(\beta)}
\newcommand*{\zCrit}{\tg{z}_{\text{crit}}(\beta)}
\newcommand*{\interval}{1}
\newcommand*{\stabconst}{B}
\newcommand{\constPsi}{A_\Psi}
\newcommand*{\sigmab}{\sigma}
\newcommand{\defeq}{%
	\mathrel{ \vcenter{\baselineskip0.5ex \lineskiplimit0pt \hbox{\scriptsize.} \hbox{\scriptsize.}} } =}
\renewcommand{\eqdef}{%
	\mathrel{= \vcenter{\baselineskip0.5ex \lineskiplimit0pt \hbox{\scriptsize.} \hbox{\scriptsize.}} } }
\theoremstyle{plain}
\theoremstyle{definition}
\newtheorem{Definition}{\sffamily Definition}[section]
\newtheorem{Example}{\sffamily Example}
\newtheorem{Excont}{\sffamily Example}
\newtheorem{Lemma}[Definition]{\sffamily Lemma}
\newtheorem{Corollary}[Definition]{\sffamily Corollary}
\newtheorem{Proposition}[Definition]{\sffamily Proposition}
\newtheorem{Assumption}{\sffamily Assumption}
\newtheorem{Theorem}{\sffamily Theorem}%[Definition]
\theoremstyle{remark}
\newtheorem{Remark}[Definition]{\normalfont\sffamily Remark}
\newtheorem{Remarks}[Definition]{\normalfont\sffamily Remarks}
\newtheorem*{Example*}{\normalfont\sffamily Example}
\providecommand{\customgenericname}{}
\newcommand{\newcustomtheorem}[2]{%
  \newenvironment{#1}[1]
  {% 
   \renewcommand\customgenericname{\normalfont\bfseries\sffamily #2}%
   \ifx&#1&%
   \renewcommand\theinnercustomgeneric{}
   \else \renewcommand\theinnercustomgeneric{\theAssumption ##1}
   \fi
   \innercustomgeneric
  }
  {\endinnercustomgeneric}
}
\newcommand*\myrule[1][.25\textwidth]{%
	\tikz {\path [fill, draw] (0,0) [out=0, in=180] to +(.5*#1,1pt) [out=0, in=180] to +(.5*#1,-1pt) [out=180, in=0] to +(-.5*#1,-1pt) [out=180, in=0] to cycle;}}
\def\@maketitle{%
  \newpage
  \null
  \vskip 2em%
  \begin{center}%
  \let \footnote \thanks
    \begin{minipage}{.7\textwidth}
    	\begin{center}
    		\sffamily\Large\textbf{{ \@title }}\par
    	\end{center}
    \end{minipage}
    \vskip 1.5em%
    {\large
      \lineskip .5em%
      \vskip -.8em
      \vspace{-1em} %because of space for affiliation
      \begin{tabular}[t]{c}%
        {\@author}
      \end{tabular}\par}%
    \vskip .5em%
    %{\small \@date}%
    \par
  \vskip .7em
  \vspace{-1.5em} %because of space for affiliation
  \myrule[.15\textwidth]
  \vskip 1.5em
  \end{center}%
  }
\renewenvironment{abstract}{%
\vskip -2em
\hfill\begin{center}\small
\begin{minipage}{0.75\textwidth}}
{\par\noindent
\end{minipage}
\end{center}
\vskip 2.5em}
\newcommand{\comment}[1]{#1}
\title{Gibbs point processes on path space: existence, cluster expansion and uniqueness}
\author{Alexander Zass}
\affil{Universit\"at Potsdam, WIAS Berlin
\\
{\small \texttt{zass@wias-berlin.de}}
}
\begin{document}

\maketitle

\begin{abstract}
	We present general existence and uniqueness results for marked models with pair interactions, exemplified through Gibbs point processes on path space. More precisely, we study a class of infinite-dimensional diffusions under Gibbsian interactions, in the context of marked point configurations: the starting points belong to $\IR^d$, and the marks are the paths of Langevin diffusions. We use the entropy method to prove existence of an infinite-volume Gibbs point process and use cluster expansion tools to provide an explicit activity domain in which uniqueness holds.
	\bigskip
	
	\noindent {\sf Keywords:} marked Gibbs point processes, DLR equations, uniqueness, cluster expansion, infinite-dimensional diffusions
	\smallskip
	
	\noindent {\sf MSC 2020:} 60K35, 60G55, 60G60, 82B21, 82C22
\end{abstract}

%\tableofcontents

\section*{Introduction}

\comment{The motivating example for this work is a system of infinitely many Langevin diffusions in interaction: through the lens of Gibbs point process theory, we first see each diffusion -- starting in $x\in\IR^d$ and with displacement $\big(m(s),\ s\in[0,\interval]\big)$ -- as a marked point $\x = (x,m)\in\mathcal{E}\defeq\IR^d\times C_0$, where $C_0$ is the normed space of continuous paths starting at $0$. We then consider an infinite collection of them, in terms of a marked point process on $\IR^d\times C_0$. Finally,
%On this state space we then consider a pair potential $\Phi$ that acts on both the starting points and the trajectories of the marked points.
we add, on the space of marked point configurations, a Gibbsian energy functional $H$, with (finite but) not uniformly bounded interaction range. In such a framework, the questions of existence and uniqueness of Gibbs point processes are far from trivial; in particular, we note how the norm of the random marks is a priori unbounded.}
%Decomposed in a random numbers of Brownian bridges
%We are interested here in not-decomposed. open trajectories, the best framework for which is that of marked points, with starting location and path.

\comment{The interest in point processes on path space can be traced back to the works of R. Feynman and M. Ka\v{c} for quantum particle systems (\cite{kac,feynman}). In this context, by applying a functional integration to the operator $e^{-\beta H}$, D. Ruelle obtained in \cite{ruelle_1969} the integration over a random number of Brownian bridges of length $\beta$. This representation was interpreted by J. Ginibre (\cite{ginibre}, and more recently, e.g. \cite{rafler_thesis,ueltschi_06}) as a point processes on the space of composite loops.
The first description of a quantum system in terms of a marked point processes is due to K.-H. Fichtner (\cite{fichtner_1991}), in the non-interacting case: the marks, elements of $\IR^d$, are given by the sequence of positions (at discrete time points) of the gas particles. In the interacting Gibbsian framework, in \cite{ACK11}, the authors use marked point processes to provide a variational formula for the free energy of a system of repulsive bosons.}
\comment{Using the Gibbsian framework to describe the interaction between open paths (and not only loops) is then highly sought-after, as Gibbs point processes could be seen to be the solutions of a variational problem. This is connected to the open question of Bose--Einstein condensation (see the discussion in \cite{ACK11}). A complete analysis of this phenomenon is missing at the moment, and this work hopes to contribute towards an understanding of the Gibbsian nature of processes on path space.}

\comment{We are interested here in open trajectories, i.e. when the final position is not a priori fixed. The best setting for this is that of marked points, where the starting location of the path is a point $x\in\IR^d$ and its mark is given by a (centred) path $m\in C_0$.}
%point processes with marks given by Brownian loops appear when rewriting the partition function of the system in terms of path integrals (see, e.g. the survey \cite{ginibre} by J. Ginibre), and one is usually interested in studying a variational principle for such measures (\cite{ACK11}).

In this work we present general existence and uniqueness results (Theorem \ref{thm:diff:existence} and Theorem \ref{thm:diff:uniqueness}) for marked models with pair interactions, which we exemplify through Gibbs point processes on path space (Section \ref{sec:diff:path}). 
In this setting, we consider interactions between two trajectories given as time integrals of (instantaneous) pair potentials which are usual for classical systems in $\IR^d$, like the Lennard--Jones pair potential. \comment{Note that we allow for potentials that are not (fully) repulsive, which induces technical complexities. Nevertheless, we require -- in order to satisfy the stability conditions that are needed in the method (Assumption \ref{hyp:diff:1}) -- a hard-core component near the origin (see Example \ref{ex:diff:0}).}

After presenting the general setting in Section \ref{sec:diff:setting}, we tackle the existence question in Section \ref{sec:diff:existence}, via the Dobrushin--Lanford--Ruelle description of Gibbs point processes. Under some stability assumptions for the energy functional $\Energy$, we are able to prove (in Theorem \ref{thm:diff:existence}) the existence, for any activity $z$ and inverse temperature $\beta$, of at least one infinite-volume marked Gibbs point process $P^{z,\beta}$ with energy functional $\Energy$, by applying the entropy method presented for the general marked setting in \cite{roelly_zass_2020}.

Moreover, we also show that, for any $N\geq 1$, the $N$-point correlation function $\rho_N$ of these Gibbs point processes satisfy a (point-dependent) \emph{Ruelle bound} of the following form: there exists a function $\constan\colon\mathcal{E}\to\IR_+$ such that, for almost any finite path configuration $(\x_1,\dots,\x_N)\in\mathcal{E}^N$,
\begin{equation*}
	\rho_N(\x_1,\dots,\x_N)\leq \prod_{i=1}^N \constan(\x_i).
\end{equation*}

In Sections \ref{sec:diff:RBcorrelation} and \ref{sec:diff:KS} we present, as a novel result, an explicit activity domain where uniqueness of the Gibbs point process holds. This is obtained with the approach of cluster expansion and the Kirkwood--Salsburg equations -- a method which was first developed for lattice systems in the 1980s (see e.g. \cite{malyshev_1980}) and then extended to the continuous case (see e.g. \cite{malyshev_minlos_1991,nehring_poghosyan_zessin_2012}). 
%We are hopeful these techniques and assumptions -- presented here making use of the specificity of the path space properties -- could be adapted to different marked settings.

In the case of unmarked continuous point processes, the technique relies on considering a series expansion of the correlation functions. As presented by D. Ruelle in \cite{ruelle_1969}, one first shows that the correlation functions of a Gibbs point process can be expressed as an absolutely converging series of cluster terms, and then proves uniqueness by considering a system of integral equations -- the so-called Kirkwood--Salsburg equations -- that the correlation functions satisfy. In fact, these equations can be reformulated as a fixed-point problem for an operator $\K_z$ in an appropriately chosen Banach space, having therefore a unique solution.

The cluster expansion approach is actually well adapted to the marked setting. Indeed, S. Poghosyan and D. Ueltschi develop, in \cite{poghosyan_ueltschi_2009}, abstract \comment{finite-volume} techniques that can be used both in the classical and in the marked setting, under assumptions of so-called \emph{modified regularity} of the interaction. These assumptions and techniques are further developed in \cite{poghosyan_zessin_2020} by S. Poghosyan and H. Zessin, proving uniqueness of infinite-volume Gibbs point processes for potentials satisfying a certain stability condition (which they refer to as \emph{Penrose stability}). Some similar result is presented by S. Jansen in \cite{jansen_2019}, but making strong use of the repulsive nature of the interaction she considers.

A key tool in the Kirkwood--Salsburg proof of uniqueness is a Ruelle bound for the correlation functions of any Gibbs point process, as this shows that they belong to a certain Banach space $\Banach$ (see Subsection \ref{sec:diff:RBnonunif}).
The techniques used in \cite{poghosyan_zessin_2020} are too restrictive if applied in our setting of unbounded marks,
% (see Example \ref{ex:diff:reg})
so we use here a different approach, inspired by the work \cite{kuna_1999} of T. Kuna: in Section \ref{sec:diff:RBcorrelation}, under a different set of assumptions than that of Section \ref{sec:diff:existence}, we rely on some tree-graph estimates, to prove a Ruelle bound for the correlation functions of infinite-volume Gibbs point processes.
In particular, under an additional regularity assumption for the interaction potential $\Phi$, we show that there exists an activity threshold $\zRuelle>0$ such that, for any $z\in(0,\zRuelle)$, the correlation functions $\corr{N}{P}$ of any Gibbs point process $P$ with activity $z$ and inverse temperature $\beta$ satisfy a Ruelle bound as above, but with a constant $\constanz$: for almost any $(\x_1,\dots,\x_N)\in\mathcal{E}^N$, $\corr{N}{P}(\x_1,\dots,\x_N)\leq \constanz^N$.
%\comment{An important note is that, by using the entropy method, and not Kirkwood--Salsburg, as an existence method, we are able to obtain the uniqueness of the infinite-volume Gibbs point process under weaker regularity conditions. This is true when the interaction satisfies the tempered stability condition \hyp{t.st.} of Assumption \ref{hyp:diff:0} (see Example \ref{ex:diff:end}).}

In Section \ref{sec:diff:KS}, after showing that there exists an activity threshold $\zCrit>0$ such that, for any $z\in(0,\zCrit)$, the norm of the Kirkwood--Salsburg operator $\K_z$ in $\Banach$ is strictly smaller than $1$, we show that the associated equations have a unique solution and obtain the following uniqueness domain (in Theorem \ref{thm:diff:uniqueness}): for any $\beta>0$ and $z\in(0,\zCrit)$, there exists a unique infinite-volume \comment{marked} Gibbs point process $P$ with activity $z$ and inverse temperature $\beta$, associated to the energy functional $\Energy$.

An important note is that, by using the entropy method, and not Kirkwood--Salsburg, as an existence method, we are able to obtain the existence for any activity $z>0$, and have therefore a restriction on the activity regime that is only due to the requirements for uniqueness.

\section{The Gibbsian setting}

Before presenting the general setting of this work, we present its main motivation, that is a marked point process representation for interacting diffusions.

\subsection{A point process on path space}\label{sec:diff:path}

We consider infinitely-many independent gradient diffusions and add a dependence between them by introducing an \emph{interaction energy} in the context of marked Gibbs point processes. %In this setting, we adopt the DLR description and set up the existence and uniqueness questions that are explored in the later sections.

%\subsubsection{Infinite-dimensional free system of Langevin diffusions}\label{sec:diff:freesystem}
The basic mathematical object of this work is the following Langevin dynamics on $\IR^d$:
\begin{equation}\label{eq:diff:langevin}
	dX(s) = dB(s) - \frac{1}{2}\nabla V\big(X(s)\big)ds,\quad s\in[0,\interval],
\end{equation}
where $B$ is a standard $\IR^d$-valued Brownian motion, and $V\colon\IR^d\rightarrow\IR$ is a smooth potential satisfying, outside of some compact subset of $\IR^d$,
\begin{equation}\label{eq:diff:VgradV}
  \exists \delta',b_1,b_2>0, \quad  V(x) \geq b_1 \lvert x\rvert^{d+\delta'} \text{ and } \Delta V(x) -\frac{1}{2}\lvert \grad V (x)\rvert^2 \leq -b_2\vert x\rvert^{2+2\delta'}.
\end{equation}
It is a known result (see e.g. \cite{royer_2007}) that, under these conditions, there exists a unique solution to the SDE \eqref{eq:diff:langevin}, which generates an \emph{ultracontractive} semigroup (see \cite{kavian_1993, davies_1989}). Moreover, for any $\delta<\delta'/2$,
\begin{equation}\label{eq:diff:deltaDiff}
	\IE\left[e^{\sup_{s\in[0,\interval]}\abs{X(s)-X(0)}^{d+2\delta}}\right]<+\infty.
\end{equation}

For the rest of this work, let $\delta>0$ be fixed.

%\subsubsection{The system as a marked point process}\label{sec:diff:ppsetting}

Consider now that any (continuous) path $\x$ on $[0,1]$ can be decomposed into its initial location $x$ and a (shifted) path $m$ starting from $0$. In other words, we identify $\x$ with a marked point given by the pair $(x,m)\in \mathcal{E}\defeq \IR^d\times C_0$, where $\marks$ is the space of continuous paths on $[0,\interval]$ starting at $0$. The space $C_0$, endowed with the norm $\norm{m}$ given by the maximum displacement of the trajectory $m$, that is $\norm{m}\defeq \sup_{s\in[0,\interval]}\abs{m(s)}$, is a normed space.

On $C_0$, we consider the measure $\refmark$, given by the law of the process $X$ solution of \eqref{eq:diff:langevin} starting at $X(0)=0$. Notice that, thanks to \eqref{eq:diff:deltaDiff}, for any $\delta<\delta'/2$,
\begin{equation}\label{eq:diff:delta}
	\int_{C_0} e^{\norm{m}^{d+2\delta}}\refmark(dm)<+\infty.
\end{equation}

\begin{figure}[ht]
\begin{center}
\begin{minipage}{.9\textwidth}
\begin{minipage}{.5\textwidth}
	\includegraphics[width=\textwidth]{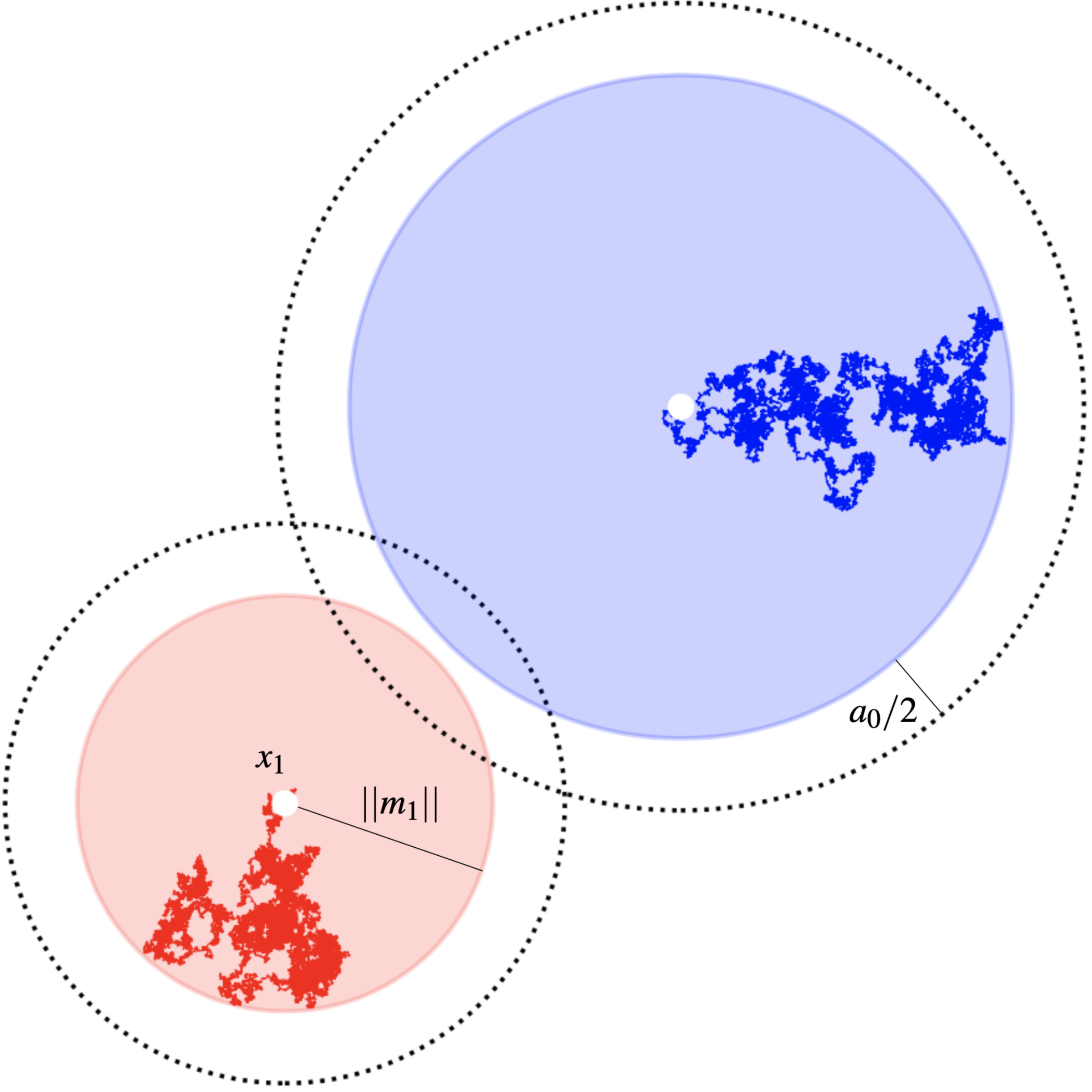}
\end{minipage}\hfill
\begin{minipage}{.4\textwidth}
	\caption{Two interacting paths of a Langevin diffusion in $\IR^2$. Each circle is centred in the starting point, while the radii of the coloured circles correspond to their maximum displacement in the time interval $[0, \interval]$; the dotted circles represent the security distance $a_0/2$ introduced in \eqref{eq:diff:range}.}
	\label{fig:diffusions}
\end{minipage}
\end{minipage}
\end{center}
\end{figure}

We can now add an interaction between two paths $\x_1=(x_1,m_1)$ and $\x_2=(x_2,m_2)$, by considering the following class of interactions, described by a path pair potential of the form
	\begin{equation*}
			\Phi(\x_1,\x_2) = \Big(\int_0^{\interval} \phi(\abs{x_1 -x_2 + m_1(s) - m_2(s)})ds\Big) \1_{[0, a_0 + \norm{m_1} + \norm{m_2}]}(\abs{x_1-x_2}).
	\end{equation*}
We provide precise definitions for the objects above in Example \ref{ex:diff:0}, after introducing the general setting in the next subsection.
	
\subsection{The general setting}\label{sec:diff:setting}
We consider here point measures on the product state space $\mathcal{E}\defeq \IR^d\times\mathcal{S}$, where $(\mathcal{S},\norm{\cdot})$ is a general normed space. On $\states$,  we take the following product measure
\begin{equation*}
	\lambda(dx,dm) = dx\otimes \refmark(dm),
\end{equation*}
where $\refmark$ is a probability measure on $\mathcal{S}$ such that \eqref{eq:diff:delta} holds.

We denote by $\confs$ the space of simple point measures (\emph{configurations}) on $\states$, i.e. of all $\sigma$-finite measures of the form 
\begin{equation*}
	\gamma=\sum_i \delta_{\x_i},\ \x_i = (x_i,m_i)\in\states,\text{ with } \x_i\neq \x_j\text{ if } i\neq j.
\end{equation*}
Since the configurations are simple, we identify them with the subset of their atoms:
\begin{equation*}
  \gamma \equiv  \big\{\x_1,\dots,\x_n,\dots\big\} \subset\mathcal{E}.
\end{equation*}
Moreover, for two disjoint configurations $\gamma,\xi\in\confs$, we denote by $\gamma\xi$ their concatenation: $\gamma\xi\defeq \gamma\cup\xi$. For $\gamma\in\confs$, $\abs{\gamma}$ denotes the number of its points, and $\confs_f\subset\confs$ is the subset of \emph{finite} configurations, i.e. with $\abs{\gamma}<+\infty$. We denote by $\underline{o}$ the configuration supported on the empty set.

For any $\Lambda\subset\IR^d$, $\confs_\Lambda\subset\confs$ denotes the subset of point measures with support in $\Lambda\times \marks$, and $\gamma_\Lambda\defeq \gamma\cap(\Lambda\times\marks)$. Let $\mathcal{B}(\IR^d)$ denote the Borel $\sigma$-algebra on $\IR^d$, and $\mathcal{B}_b(\IR^d)$ the set of bounded Borel subsets of $\IR^d$, which we often call \emph{finite volumes}. For $\Lambda\in\mathcal{B}_b(\IR^d)$, $\abs{\Lambda}$ denotes its volume.

We denote by $\mathcal{P}(\confs)$ (resp. $\mathcal{P}(\confs_\Lambda)$) the set of probability measures (or \emph{point processes}) on $\confs$ (resp. $\confs_\Lambda$). Finally, let $\IN^*\defeq\IN\setminus\{0\}$.

We consider the following measure (of infinite mass):
\begin{Definition}
	Fix $z>0$, and define the measure $\tilde{\pi}^z = 1+ \sum_{N=1}^{+\infty}\dfrac{z^N}{N!}\lambda^{\otimes N}$ on $\confs_f$. For any finite volume $\Lambda$, we consider as reference probability measure the \emph{marked Poisson point process} $\pi^z_\Lambda\in\mathcal{P}(\confs_\Lambda)$ with intensity parameter $z$, defined by normalising the restriction $\tilde\pi^z_\Lambda$ of $\tilde{\pi}^z$ to $\confs_\Lambda$ as follows:
	\begin{equation*}
		\pi^z_\Lambda(d\gamma) = e^{-z\abs{\Lambda}}\tilde\pi^z_\Lambda(d\gamma).	
	\end{equation*}
\end{Definition}
As a modification of the Poisson point process, we introduce an interaction between the marked points by considering the finite-volume Gibbs point process associated to an energy functional $\Energy$. More precisely:
\begin{Definition}
	An \emph{energy functional} $\Energy\colon\confs_f\rightarrow\IRi$
	%at \emph{inverse temperature} $\beta>0$, 
	is a measurable functional on the set of finite configurations, with $\Energy(\underline{o})=0$ by convention. In this work, we consider the energy of a finite number $N\geq 1$ of marked points to be defined, for any $\gamma = \{\x_1,\dots,\x_N\}\in\confs_f$, by the sum of a self-interaction \comment{(\emph{external field})} term and a pair-potential term:
	\begin{equation}\label{eq:diff:energy}
		\comment{\Energy}(\gamma) \defeq \sum_{i=1}^N \Psi(\x_i) + \sum_{1\leq i<j\leq N} \Phi(\x_i,\x_j) \in\IRi.
	\end{equation}
\end{Definition}

We specify later a growth condition on the self potential $\Psi$, and consider different sets of assumptions on the pair potential $\Phi\colon\states\times\states\to\IRi$. In particular, in Section \ref{sec:diff:existence} we use the notion of tempered stability (Assumption \ref{hyp:diff:0}), in Section \ref{sec:diff:RBcorrelation} we use the stronger notion of classical stability (Assumption \ref{hyp:diff:0c}), and in Section \ref{sec:diff:KS} a weak stability condition (Assumption \ref{hyp:diff:0b}). We comment on the relation between them in Remark \ref{rmk:diff:stab}.

We denote the \emph{pair-interaction} component of the energy as
\begin{equation*}
	\PairEnergy(\gamma) \defeq 	\sum_{1\leq i<j\leq N} \Phi(\x_i,\x_j),
\end{equation*}
and the \emph{conditional energy} of any marked point $\x\in\states$ given any $\xi\in\confs$ as
\begin{equation}\label{eq:diff:condEnSum}
	\CondEn{\x}{\xi} \defeq \sum_{\y\in\xi}\Phi(\x,\y).	
\end{equation}

Finally, for any $\gamma\in\confs$, let
\begin{equation*}
	\CondEn{\gamma}{\xi} \defeq \sum_{\x\in\gamma} \CondEn{\x}{\xi}.
\end{equation*}
be the conditional energy of the configuration $\gamma$ given the configuration $\xi$.

\begin{Remark}
	As the infinite sum in \eqref{eq:diff:condEnSum} is not always well defined (see Remark \ref{rmk:hyp2}), in order for the Definition \ref{def:diff:DLR} of an infinite-volume Gibbs point process $P$ to be well posed, one needs to require the conditional energy be well defined for $P$-almost all $\xi\in\confs$. We note however that this is not a restriction for Gibbs measures supported on the space of tempered configurations (see Remark \ref{rmk:hyp2}).
\end{Remark}

\begin{Definition}
	Let $\Energy$ be an energy functional as in \eqref{eq:diff:energy}. For any $\Lambda\in\mathcal{B}_b(\IR^d)$, the free-boundary-condition \emph{finite-volume Gibbs point process} on $\Lambda$ with energy functional $\Energy$, activity $z>0$ and inverse temperature $\beta > 0$ is the probability measure $P^{z,\beta}_\Lambda$ on $\confs_\Lambda$ defined by
	\begin{equation}\label{eq:diff:finiteGibbs}
		P^{z,\beta}_\Lambda(d\gamma) \defeq \dfrac{1}{Z^{z,\beta}_\Lambda}e^{-\beta \Energy(\gamma_{\Lambda})}\, \pi^z_{\Lambda}(d\gamma),
	\end{equation}
	where the \emph{partition function} $Z_\Lambda^{z,\beta}$ is the normalisation constant.
\end{Definition}

In this work we investigate the existence and uniqueness of an infinite-volume Gibbs point process, in the following sense:
\begin{Definition}\label{def:diff:DLR}
	Let $\Energy$ be an energy functional as in \eqref{eq:diff:energy}. A probability measure $P$ on $\confs$ is said to be an \emph{infinite-volume Gibbs point process} with energy functional $\Energy$, activity $z>0$ and inverse temperature $\beta > 0$, denoted $P\in\Gibbs$, if:
	
	For any $\gamma\in\confs_f$ and $P$-almost all $\xi\in\confs$, the conditional energy $\PairEnergy(\gamma_\Lambda\vert\xi_{\Lambda^c})$ of $\gamma$ given $\xi$ is well defined	, and

	$P$ satisfies, for any $\Lambda\in\mathcal{B}_b(\IR^d)$ and any positive, bounded, and measurable functional \mbox{$F:\confs\rightarrow\IR$}, the following \emph{DLR equation} (for Dobrushin--Lanford--Ruelle)
	\begin{equation}\tag{DLR}\label{eq:diff:DLR}
		\int_{\confs} F(\gamma)\, P(d\gamma) =
\int_{\confs} \dfrac{1}{Z^{z,\beta}_\Lambda(\xi)} \int_{\confs_\Lambda}F(\gamma_\Lambda\xi_{\Lambda^c}) e^{-\beta\big(\Energy(\gamma_\Lambda) +  \PairEnergy(\gamma_\Lambda\vert\xi_{\Lambda^c})\big)}\pi^z_\Lambda(d\gamma)\
P(d\xi),
	\end{equation}
where the partition function $Z_\Lambda^{z,\beta}(\xi)$ depends on the boundary condition $\xi$, the inverse temperature $\beta$, and the activity $z$.
\end{Definition}
A concept that will help in showing that such an infinite-volume measure exists is that of tempered configuration. For such a configuration $\gamma$, the number $\abs{\gamma_\Lambda}$ of its points in any finite volume $\Lambda$, should grow sublinearly w.r.t. the volume, while the norm $\norm{m}$ of its marks should grow as a fractional power of it. More precisely,
\begin{Definition}
	The set of \emph{tempered configurations} is given by the increasing union $\confs^{\tempered}\defeq \bigcup_{\tg{t}\in\IN^*}\confs^\tg{t}$, where
	\begin{equation}\label{eq:diff:temp}
		\confs^\tg{t}\defeq \big\{\gamma\in\confs: \forall l\in\IN^*,\sum_{\substack{(x,m)\in\gamma\\ \abs{x}\leq l}}(1+\norm{m}^{d+2\delta})\leq \tg{t}l^d \big\}.
	\end{equation}
	We denote by $\GibbsT\defeq \Gibbs\cap\mathcal{P}(\confs^\tempered)$ the set of \emph{tempered marked Gibbs point processes}, i.e. those whose support is included in the tempered configurations.
\end{Definition}

\section{Existence via the entropy method}\label{sec:diff:existence}

The proof of the existence of an infinite-volume Gibbs point process that we describe here makes use of the specific entropy functional as a tightness tool, as in the general approach presented in \cite{roelly_zass_2020}. In what follows we show that a tempered marked Gibbs point process
%associated to infinitely-many interacting Langevin dynamics 
exists as soon as the interaction energy satisfies some quite natural assumptions.

In order for our model (and the example of interacting Langevin diffusions) to fit the setting of the aforementioned paper, in this section we consider energy functionals $\Energy$ that satisfy Assumptions \ref{hyp:diff:0}+\ref{hyp:diff:1}.

\setcounter{Assumption}{1}
\begin{AssumptionOpt}{}[Self-interaction growth and stability]\label{hyp:diff:0}
	\begin{description}[font = \normalfont,align=left]
		\item[]
		\item[\hyp{self}] The self-interaction potential $\Psi:\states\rightarrow\IRi$ acting on each \comment{marked point $\x=(x,m)$} is bounded from below by the opposite of a power of the norm of its mark, i.e.
	\begin{equation}\label{eq:diff:Psi}
		\exists \constPsi>0: \inf_{x\in\IR^d}\Psi(x,m)\geq -\constPsi\norm{m}^{d+\delta}.
	\end{equation}
		\item[\hyp{t.st.}] The pair potential $\Phi:\states\times\states\rightarrow\IRi$ between two \comment{marked points} is a symmetric functional that satisfies the following (mark-dependent) \emph{tempered stability} condition: there exists a constant $\stabconst_\Phi\geq 0$ such that for any finite configuration $\gamma = \{\x_1,\dots,\x_N\}\in\confs_f$,
		\begin{equation}\label{eq:diff:stab}
			\PairEnergy(\gamma) = \sum_{1\leq i<j\leq N} \Phi(\x_i,\x_j) \geq - \stabconst_\Phi \sum_{i=1}^{N}(1+\norm{m_i}^{d+\delta}).
 		\end{equation}
	\end{description}
\end{AssumptionOpt}

\begin{Assumption}[Range and conditional stability]\label{hyp:diff:1} The pair potential $\Phi:\states\times\states\rightarrow\IRi$ is such that
	\begin{description}[font = \normalfont,align=left]
 		\item[\hyp{r}] Two diffusions $\x_i,\x_j$ do not interact whenever they start \emph{too far away}: there exists a constant $a_0\geq 0$ such that 
 		\begin{equation}\label{eq:diff:range}
 			\Phi(\x_i,\x_j) = 0 \text{ whenever } \lvert x_i - x_j\rvert > a_0 + \norm{m_i} + \norm{m_j}.	
 		\end{equation}
 		\item[\hyp{t.cond.st.}] There exists a constant $\bar \stabconst_\Phi\geq 0$ such that, for any $\x=(x,m)\in\states$, for any configuration $\xi\in\confs^{\tempered}$,
 		\begin{equation}\label{eq:diff:condStab}
 			\CondEn{\x}{\xi} = \sum_{\y\in\xi}\Phi(\x,\y)\geq 	-\bar \stabconst_\Phi(1+\norm{m}^{d+\delta}).
		\end{equation}
	\end{description}
\end{Assumption}
\begin{Remark}\label{rmk:hyp2}
We briefly comment on these assumptions.
\begin{itemize}
\item The summation in \eqref{eq:diff:condStab} for the conditional energy is well defined since, as we will see in the proof of Theorem \ref{thm:diff:existence}, the range assumption \hyp{r} implies that the infinite sum of the conditional energy of $\x$ given $\xi$ is actually given by a random but finite number of terms.
\item 
\comment{The tempered stability \hyp{t.st.} is needed in order to obtain the entropy bounds presented in Subsection 3.2 of \cite{roelly_zass_2020}, and therefore the existence of an infinite-volume measure. On the other hand, the \emph{conditional} tempered stability\\ \hyp{t.cond.st.} -- which is only on \emph{tempered} boundary conditions -- is enough to recover the Gibbsianity of this object (see Subsection 3.4 of \cite{roelly_zass_2020}).
%\item We note that \hyp{t.st.} can be weakened to be a non-uniform bound of the form $\PairEnergy(\gamma)\geq -\stabconst_\Phi\sum_{(x,m)\in\gamma}(1+\norm{m}^{d+\delta})$. We choose here to present the result for the uniform version only for convenience of presentation (as it is needed in Subsection \ref{sec:diff:TreeEstimates}).
}
\end{itemize}

\end{Remark}

It is easy to show that the following Lemma holds for the support of any Gibbs point process:
\begin{Lemma}\label{lem:diff:admissible}
	For any activity $z>0$ and inverse temperature $\beta > 0$, any infinite-volume Gibbs point process $P\in\GibbsT$ is supported on configurations with locally finite energy, that is configurations $\gamma\in\confs^{\tempered}$ such that, for any $\Delta\in\mathcal{B}_b(\IR^d)$, $\PairEnergy(\gamma_\Delta)<+\infty$. Note that this is true also whenever $\Phi$ takes infinite values.
\end{Lemma}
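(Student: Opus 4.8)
The plan is to deduce the result from the DLR equations \eqref{eq:diff:DLR}, using in an essential way that $P$ is tempered. First I would replace the uncountable family of conditions indexed by $\Delta$ by a countable one. Since in \eqref{eq:diff:temp} every summand is at least $1$, each $\gamma\in\confs^{\tempered}$ has only finitely many points in any ball, so $\gamma_\Lambda$ is a finite configuration for every $\Lambda\in\mathcal{B}_b(\IR^d)$; consequently $\PairEnergy(\gamma_\Lambda)$ is a finite sum of terms in $\IRi$ and can equal $+\infty$ only if some pair $\x_i,\x_j\in\gamma_\Lambda$ satisfies $\Phi(\x_i,\x_j)=+\infty$. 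Any such pair is contained in the ball $B_k=\{\abs{x}\le k\}$ for all $k$ large enough, whence
\begin{equation*}
	\big\{\gamma\in\confs^{\tempered}:\ \exists\,\Delta\in\mathcal{B}_b(\IR^d),\ \PairEnergy(\gamma_\Delta)=+\infty\big\}=\bigcup_{k\ge 1}\big\{\gamma\in\confs^{\tempered}:\ \PairEnergy(\gamma_{B_k})=+\infty\big\}.
\end{equation*}
It therefore suffices to prove $P\big(\PairEnergy(\gamma_{B_k})=+\infty\big)=0$ for each fixed $k$.

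Fixing $k$ and writing $\Lambda=B_k$, I would apply the DLR equation to the bounded measurable functional $F(\gamma)=\1\{\PairEnergy(\gamma_\Lambda)=+\infty\}$. Because $(\gamma_\Lambda\xi_{\Lambda^c})_\Lambda=\gamma_\Lambda$, the DLR equation reads
\begin{equation*}
	P\big(\PairEnergy(\gamma_\Lambda)=+\infty\big)=\int_{\confs}\frac{1}{Z^z_\Lambda(\xi)}\int_{\confs_\Lambda}\1\{\PairEnergy(\gamma_\Lambda)=+\infty\}\,e^{-\beta\left(\Energy(\gamma_\Lambda)+\CondEn{\gamma_\Lambda}{\xi_{\Lambda^c}}\right)}\,\pi^z_\Lambda(d\gamma)\,P(d\xi),
\end{equation*}
so the whole claim reduces to showing that the integrand vanishes identically, i.e.\ that on the event $\{\PairEnergy(\gamma_\Lambda)=+\infty\}$ the Boltzmann exponent $\Energy(\gamma_\Lambda)+\CondEn{\gamma_\Lambda}{\xi_{\Lambda^c}}$ is genuinely $+\infty$ (off the event the indicator already kills the integrand).

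The hard part — really the only subtlety — is to rule out an $\infty-\infty$ indeterminacy in that exponent, and this is exactly where the lower-bound assumptions enter. For the part internal to $\Lambda$ I would write $\Energy(\gamma_\Lambda)=\sum_{\x\in\gamma_\Lambda}\Psi(\x)+\beta\,\PairEnergy(\gamma_\Lambda)$: the self-energy is a finite sum of terms in $\IRi$, hence $>-\infty$ (and is in any case bounded below through \hyp{self}), so on the event $\Energy(\gamma_\Lambda)=+\infty$. For the boundary interaction I would use that $\xi$ is $P$-a.e.\ tempered, since $P\in\GibbsT$; then $\xi_{\Lambda^c}$ is tempered as well, the local-stability bound \hyp{loc.st} applies to each of the finitely many $\x=(x,m)\in\gamma_\Lambda$, and summing gives
\begin{equation*}
	\CondEn{\gamma_\Lambda}{\xi_{\Lambda^c}}=\sum_{\x\in\gamma_\Lambda}\CondEn{\x}{\xi_{\Lambda^c}}\ge-\bar\stabconst_\Phi\sum_{(x,m)\in\gamma_\Lambda}\big(1+\norm{m}^{d+\delta}\big)>-\infty.
\end{equation*}
Hence the exponent equals $(+\infty)+(\text{a finite, lower-bounded quantity})=+\infty$, the Boltzmann factor is $0$, the integrand vanishes, and the right-hand side is $0$. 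This conclusion holds even when $\Phi$ attains the value $+\infty$, since the argument requires only that $Z^z_\Lambda(\xi)\in(0,+\infty)$ for $P$-a.e.\ $\xi$, which is part of $P\in\Gibbs$. Summing over $k$ yields the claim.
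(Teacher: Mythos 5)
The paper states this lemma without proof (it is introduced only with ``It is easy to show\dots''), so there is no argument of the author's to compare against; your proof is correct and is the natural route. The two points that actually require care --- that temperedness makes $\gamma_\Delta$ finite, so that $\PairEnergy(\gamma_\Delta)=+\infty$ forces a single pair with $\Phi=+\infty$ and the condition reduces to the countable family of balls $B_k$, and that \hyp{self} together with \hyp{loc.st} applied to the (still tempered) restriction $\xi_{\Lambda^c}$ exclude an $\infty-\infty$ cancellation in the Boltzmann exponent --- are both handled correctly, after which applying the DLR equation to $F=\1\{\PairEnergy(\gamma_{B_k})=+\infty\}$ indeed yields probability zero.
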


\begin{Example}\label{ex:diff:0}
	On $\states = \IR^d\times C_0$, consider the following class of interactions, described by a path pair potential of the form
	\begin{equation}\label{eq:diff:2body}
			\Phi(\x_i,\x_j) = \Big(\int_0^{\interval} \phi(\abs{x_i -x_j + m_i(s) - m_j(s)})ds\Big) \1_{[0, a_0 + \norm{m_i} + \norm{m_j}]}(\abs{x_i-x_j}),
	\end{equation}
	with $\phi$ given by the sum of two potentials on $\IR_+$: $\phi = \phi_{hc} + \phi_{l}$, where
	\begin{itemize}
			\item The potential $\phi_{hc}$ is pure \emph{hard core} at some diameter $R>0$, that is
			\begin{equation*}
				\phi_{hc}(u)=(+\infty)\1_{[0,R)}(u).	
			\end{equation*}
			\item The potential $\phi_l$ satisfies a stability property, i.e. there exists a constant $\stabconst_\phi\geq 0$ such that, for any admissible configuration $\{y_1,\dots,y_N\}$, $N\geq 1$, the following holds (see \cite{ruelle_1969}, paragraph 3.2.5):
		\begin{equation}\label{eq:diff:hcstab}
			\sum_{i=1}^N \phi_l(\abs{y_i})\geq -2\stabconst_\phi,
		\end{equation}
		where a finite configuration $\{y_1,\dots,y_N\}\subset\IR^d$, $N\geq 1$, is called \emph{admissible} if, for any pair $y_i\neq y_j$, $\phi(\abs{y_i-y_j})<+\infty$. 
		\end{itemize}
		Note that \eqref{eq:diff:hcstab} implies that the potential $\phi$ -- defined on the location space $\IR^d$ -- is \emph{stable} in the sense of Ruelle (see \cite{ruelle_1970}), with stability constant $\stabconst_\phi$, i.e.
		\begin{equation*}
			\forall N\geq 1,\, \forall\{y_1,\dots,y_N\}\subset\IR^d,\quad \sum_{1\leq i<j\leq N} \phi(\abs{y_i-y_j})\geq - \stabconst_\phi N.
		\end{equation*}
		Note also how the coefficient $a_0$ here plays the role of a \emph{sensitivity parameter} (see Figure \ref{fig:diffusions}): if the pair potential $\phi$ is repulsive (i.e. positive), then $a_0$ can take any finite positive value. If instead on some region $\phi$ is attractive (i.e. negative), \comment{in order for $\Phi$ to inherit the stability property from $\phi$,} $a_0$ should be chosen in such a way that $\phi$ remains attractive on $[a_0,+\infty)$: $\phi(u)\leq 0$ if $u\geq a_0$ (see Figure \ref{fig:potential}, and \comment{cf. the notion of \emph{tempered potential} of \cite{ruelle_1969}}). Indeed, it is easy to see that, in this case, \eqref{eq:diff:stab} holds with $\stabconst_\Phi = \stabconst_\phi$.
		
		Finally, note that, if $\phi$ is a \emph{finite-range} pair potential, i.e. supported on $[0,r]$, for $r>0$, then 
		\begin{equation*}
			\int_0^{\interval} \phi(\abs{x_i -x_j + m_i(s) - m_j(s)})ds = \Big(\int_0^{\interval} \phi(\abs{x_i -x_j + m_i(s) - m_j(s)})ds\Big) \1_{[0, r + \norm{m_i} + \norm{m_j}]}(\abs{x_i-x_j}).
		\end{equation*}
		
		We now show that this class of potentials satisfies Assumption \ref{hyp:diff:1}.
		\begin{proof}
		Firstly, thank to the previous Lemma, we can actually restrict our study to the admissible configurations. Setting $\tg{l}(\tg{t})\defeq 2^{\frac{d+\delta}{\delta} - 1}\tg{t}^{\frac{1}{\delta}}$, one can see that the range of the interaction is bounded by
		\begin{equation*}
			\tg{r}(\gamma,\Lambda) = 3\tg{d}(\Lambda) + 2\tg{l}(\tg{t}) + 2\sup_{(x,m)\in\gamma_\Lambda}\norm{m} + 1 + a_0,
		\end{equation*}
		where $\tg{d}(\Lambda)$ is the smallest $R>0$ such that $\Lambda\subset B(0,R)$, i.e. for any $\x=(x,m)\in\states$ and $\xi\in\confs^{\tg{t}}$, $\tg{t}\geq 1$, setting $\Delta \defeq B(x,\tg{r}(\gamma,\Lambda))$, the conditional energy $\CondEn{\x}{\xi\setminus\{x\}}$ of $\x$ given $\xi$ is actually given by $\CondEn{\x}{\xi_{\Delta\setminus\{x\}}}$.
		%: it is a finite sum, and is bounded from below by $-2\stabconst_\phi$.
%	\begin{equation*}
%	\begin{split}
%		\CondEn{\x}{\xi_{\Delta\setminus\{x\}}} &= \bigg(\int_0^{\interval} \sum_{\x_i\in\xi_{\Delta\setminus\{x\}}} \phi(\abs{x-x_i+m(s)-m_i(s)})ds\bigg) \1_{\{\lvert x - x_i\rvert\leq a_0 + \norm{m} + \norm{m_i}\}}\\
%		&\geq -2\stabconst_\phi. 
%	\end{split}
%	\end{equation*}
	In order to obtain the lower bound for the conditional energy in this context, it is unfortunately not true -- as used instead in Section 4 of \cite{roelly_zass_2020} -- that we can control the number of points of $\xi_\Delta$, \emph{uniformly in $\x$}. On the other hand, thanks to Lemma \ref{lem:diff:admissible}, we can assume that $\xi_\Delta$ is of finite energy, and therefore use \eqref{eq:diff:hcstab} to estimate
	\begin{equation*}
	\begin{split}
		\sum_{\x_j\in\xi_{\Delta\setminus\Lambda}} \Phi(\x_i,\x_j) &= \int_0^{\interval} \sum_{\x_j\in\xi_{\Delta\setminus\Lambda}} \phi(\abs{x_i-x_j+m_i(s)-m_j(s)})ds\, \1_{\{\lvert x_i - x_j\rvert\leq a_0 + \norm{m_i} + \norm{m_j}\}}\\
		&\overset{\mathclap{\eqref{eq:diff:hcstab}}}{\geq}\  \int_0^{\interval}  -2\stabconst_\phi \geq -2\stabconst_\phi. 
	\end{split}
	\end{equation*}
	Together with the stability of $\gamma_\Lambda\mapsto \Energy(\gamma_\Lambda)$, this yields the following lower bound for the conditional energy:
	\begin{equation*}
		H_\Lambda(\gamma_\Lambda\xi_{\Lambda^c})\geq -\beta(A_\Psi\vee 2\stabconst_\phi)\sum_{\x\in\gamma_\Lambda}(1+\norm{m}^{d+\delta}).\qedhere
	\end{equation*}
	\end{proof}
	Notice how, under these conditions, the trajectories of two interacting paths $\x_1 = (x_1,m_1)$ and $\x_2 = (x_2,m_2)$ are allowed to intersect, but at each time $s$ the paths keep at a distance of at least $R$; the hard-core component, indeed, imposes $\abs{x_1+m_1(s) - x_2+m_2(s)}\geq R$ for any $s\in[0,1]$.
	
	\medskip
	\noindent\emph{A particular case:} Let $\phi$ be given by the sum of a hard-core component and a shifted Lennard--Jones potential, i.e.
		\begin{equation*}
			\phi(u) = \phi_{hc}(u) + \phi_{LJ}(u-R)\1_{[R,+\infty)}(u),\ u\in\IR_+,
		\end{equation*}
		where $\phi_{LJ}(u) = \frac{a}{u^{12}} - \frac{b}{u^6}$, $a,b>0$. Pictured in Figure \ref{fig:potential} is an example with $R = 1$. We remark that this potential has a non-integrable growth in a neighbourhood of its hard-core component; in particular, it does not satisfy Assumption \ref{hyp:diff:2} below, which is used for the uniqueness proof.
	\begin{figure}[t]
	\begin{center}
	\begin{minipage}{.9\textwidth}
	\begin{minipage}[b]{.5\textwidth}
		\includegraphics[width=1.2\textwidth]{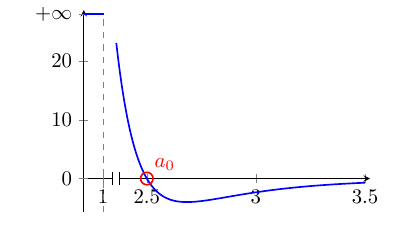}
	\end{minipage}\hfill
	\begin{minipage}[b]{.4\textwidth}
		\caption{A shifted Lennard--Jones potential $\phi_{LJ}(u-1) = 16\big(\big(\frac{3/2}{u-1}\big)^{12} - \big(\frac{3/2}{u-1}\big)^{6}\big)$ with hard-core diameter $R = 1$; it is always negative after $a_0=2.5$, and explodes as $x\to 1^+$.}
		\label{fig:potential}
	\end{minipage}
	\end{minipage}
	\end{center}
	\end{figure}	
\end{Example}

\begin{Example}
	One can consider a class of \emph{translation-invariant} pair potentials. More precisely, let $\Phi$ be invariant by translation: $\Phi(\x_i,\x_j) = \Phi(\x_i-\x_j)$, with
		\begin{equation*}
			\Phi(\x) = \Big(\int_0^{\interval} \phi(\abs{x + m(s)})ds\Big) \1_{\{\lvert x\rvert\leq a_0 + \norm{m}\}},
		\end{equation*}
		where $\phi$ is given by the above sum of a hard-core component and a shifted Lennard--Jones potential.
\end{Example} 

We can now state our existence result.
\begin{Theorem}\label{thm:diff:existence}
	Let $\Energy$ be an energy functional as in \eqref{eq:diff:energy}, satisfying Assumptions \ref{hyp:diff:0}+\ref{hyp:diff:1}. For any $z>0$ and $\beta > 0$, there exists at least one infinite-volume tempered Gibbs point process $P^{z,\beta}\in\GibbsT$. 
	%Moreover, for any $N\geq 1$, the $N$-point correlation function of $P^{z,\beta}$ exists and can be written as in \eqref{eq:diff:corrGibbs}.
\end{Theorem}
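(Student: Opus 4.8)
The plan is to follow the entropy method of \cite{roelly_zass_2020}, whose abstract hypotheses our Assumptions \ref{hyp:diff:0} and \ref{hyp:diff:1} are tailored to meet; the only genuinely new ingredient is the control of the \emph{unbounded} path-marks, supplied by the exponential moment bound \eqref{eq:diff:delta}. Concretely, I would fix an increasing sequence of boxes $\Lambda_n\uparrow\IR^d$, consider the free-boundary finite-volume Gibbs measures $P^{z,\beta}_{\Lambda_n}$ of \eqref{eq:diff:finiteGibbs} (well defined with positive partition function thanks to stability), and form their spatial averages $\bar P_n$, the associated stationarized empirical fields. The strategy is then: (i) bound the specific entropy of the $\bar P_n$ uniformly; (ii) deduce tightness and extract an accumulation point $P^{z,\beta}$; (iii) verify that $P^{z,\beta}$ is tempered and satisfies \eqref{eq:diff:DLR}; (iv) read off the correlation functions from Proposition \ref{prop:Ncorr}.

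The heart of step (i) is a uniform bound on the specific entropy $\mathcal I_z(\bar P_n)$ of $\bar P_n$ relative to the stationary marked Poisson process of intensity $z$. Writing the finite-volume relative entropy as the difference between the $P_{\Lambda_n}$-expected energy and $\log Z^z_{\Lambda_n}$, the stability estimate \hyp{st.} bounds $\log Z^z_{\Lambda_n}$ from below by a multiple of $\abs{\Lambda_n}$, while the self-interaction lower bound \hyp{self}, $\Psi(x,m)\geq-\constPsi\norm{m}^{d+\delta}$, keeps the entropic cost of the large marks finite: since $d+\delta<d+2\delta$, the weight $e^{-\Psi}\leq e^{\constPsi\norm{m}^{d+\delta}}$ is $\refmark$-integrable by \eqref{eq:diff:delta}. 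This yields $\sup_n\mathcal I_z(\bar P_n)<+\infty$. Step (ii) is then standard: the sublevel sets of the specific-entropy functional are sequentially compact for the topology of local convergence, so that along a subsequence $\bar P_{n_k}\to P^{z,\beta}$.

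For step (iii), the delicate point — and the main obstacle — is that the interaction range \hyp{r} is finite but \emph{not uniformly bounded}, depending on the mark norms $\norm{m_i}+\norm{m_j}$; a fixed box $\Lambda$ may a priori receive contributions from arbitrarily distant points carrying very large marks, so that the usual finite-range machinery does not apply directly. This is exactly where temperedness enters. I would first establish, using \eqref{eq:diff:delta} together with the entropy and stability bounds, a uniform estimate of the form $\IE_{\bar P_n}\big[\sum_{(x,m)\in\gamma,\ \abs{x}\leq l}(1+\norm{m}^{d+2\delta})\big]\leq C\,l^d$, and then transfer it to the limit so that $P^{z,\beta}$ is supported on $\confs^{\tempered}$. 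On tempered configurations the range condition \hyp{r} forces $\CondEn{\x}{\xi}$ to reduce to a finite (random) sum, and local stability \hyp{loc.st} makes it bounded below. With the energy thus tamed, one passes to the limit in the DLR kernel, using lower semicontinuity of $\Energy$ and the equi-integrability provided by the uniform entropy bound, to conclude $P^{z,\beta}\in\GibbsT$.

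Finally, step (iv) is immediate from Proposition \ref{prop:Ncorr}: the representation \eqref{eq:diff:corrGibbs} holds provided the integral $\int_\confs e^{-\beta\CondEn{\x_1,\dots,\x_N}{\xi}}P^{z,\beta}(d\xi)$ is well defined, and local stability \hyp{loc.st} gives, for every $\xi\in\confs^{\tempered}$, the $\xi$-uniform bound $e^{-\beta\CondEn{\x_1,\dots,\x_N}{\xi}}\leq\exp\big(\beta\bar\stabconst_\Phi\sum_{i=1}^N(1+\norm{m_i}^{d+\delta})\big)$, which is finite for fixed $\x_1,\dots,\x_N$ and independent of $\xi$. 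Since $P^{z,\beta}$ is supported on tempered configurations, the integral therefore converges and \eqref{eq:diff:corrGibbs} is valid.
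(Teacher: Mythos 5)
Your proposal is correct and follows essentially the same route as the paper: both reduce the theorem to the entropy method of \cite{roelly_zass_2020} by verifying its hypotheses \hyp{st.}, \hyp{r} and \hyp{loc.st} (stability via \eqref{eq:diff:Psi} and \eqref{eq:diff:stab} with integrability of $e^{\constPsi\norm{m}^{d+\delta}}$ from \eqref{eq:diff:delta}, a finite random interaction range on tempered configurations, and local stability), and both obtain the representation \eqref{eq:diff:corrGibbs} from the $\xi$-uniform lower bound \eqref{eq:diff:condStab}. The only difference is presentational: you re-sketch the internal machinery of the cited theorem (empirical fields, specific entropy, tightness), whereas the paper treats it as a black box and only checks its three conditions.
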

\begin{proof}
	Let $z>0$, $\beta > 0$. In order to apply the existence result of \cite{roelly_zass_2020} to this 
	%path space 
	context, we show that a stability condition holds both for the energy of a finite configuration and for the conditional energy, and that the random interaction range is finite (possibly unbounded). These conditions are called in \cite{roelly_zass_2020}, \hyp{st.}, \hyp{r}, and \hyp{loc.st.}.
	\begin{description}[leftmargin=0pt,font=\sffamily]
		\item[Step 1.] The conditions \eqref{eq:diff:Psi} and \eqref{eq:diff:stab} -- on the self interaction and pair potential, respectively, yield the following \emph{stability} for the energy of a finite number of paths:
		\begin{equation*}
			\forall \gamma\in\confs_f,\quad \Energy(\gamma)\geq -(\constPsi\vee\stabconst_\Phi)\big( \sum_{(x,m)\in\gamma}(1+\norm{m}^{d+\delta}\big).
		\end{equation*}
		\item[Step 2.] We now focus on analysing the \emph{range of the interaction}: we show that for any tempered configuration $\gamma\in\confs^{\tg{t}}$, $\tg{t}\geq 1$, and for any finite volume $\Lambda$, there exists a positive number $\tg{r}= \tg{r}(\gamma,\Lambda)$ such that
		\begin{equation}\label{eq:diff:condEn1}
			\CondEn{\x}{\xi} = \sum_{\substack{\y\in\xi\\ 0<\abs{y-x}\leq \tg{r}}}\Phi(\x,\y).
		\end{equation}
	
		Set $\tg{l}(\tg{t})\defeq 2^{\frac{d+\delta}{\delta} - 1}\tg{t}^{\frac{1}{\delta}}$. Using the definition of tempered configurations, one has that, for all $l\geq \tg{l}(\tg{t})$ and for any $\x\in\gamma\in\confs^{\tg{t}}$ such that $\abs{x}>2l+1+a_0$,
		\begin{equation*}
			\abs{x}-\norm{m}\overset{\eqref{eq:diff:temp}}{\geq} \abs{x} - \frac{1}{2}\lceil\abs{x}\rceil	\geq l+a_0.
		\end{equation*}
		Thanks to condition \hyp{r}, this means that the range of the interaction is bounded by
		\begin{equation*}
			\tg{r}(\gamma,\Lambda) = 3\tg{d}(\Lambda) + 2\tg{l}(\tg{t}) + 2\sup_{(x,m)\in\gamma_\Lambda}\norm{m} + 1 + a_0,
		\end{equation*}
		where $\tg{d}(\Lambda)$ is the smallest $R>0$ such that $\Lambda\subset B(0,R)$.
		\item[Step 3.] Fix $\Lambda\in\mathcal{B}_b(\IR^d)$, and consider, for $\gamma\in\confs$ and $\xi\in\confs^{\tempered}$, the conditional energy of $\gamma_\Lambda$ given $\xi_{\Lambda^c}$, that is:
		\begin{equation*}
			H_\Lambda(\gamma_\Lambda\xi_{\Lambda^c}) \defeq \Energy(\gamma_\Lambda) + \CondEn{\gamma_\Lambda}{\xi_{\Lambda^c}}.
    	\end{equation*}
    	Thanks to \eqref{eq:diff:condEn1}, denoting $\Delta\defeq \Lambda\oplus B(0,\tg{r}(\gamma,\Lambda))$, we have
	\begin{equation}\label{eq:diff:condEn2}
		\CondEn{\gamma_\Lambda}{\xi_{\Lambda^c}} = \CondEn{\gamma_\Lambda}{\xi_{\Delta\setminus\Lambda}} = \sum_{\x_i\in\gamma_\Lambda}\sum_{\x_j\in\xi_{\Delta\setminus\Lambda}} \Phi(\x_i,\x_j).
	\end{equation}
	From \hyp{t.cond.st.}, we have that $\CondEn{\gamma_\Lambda}{\xi_{\Lambda^c}}\geq - \sum_{\x_i\in\gamma_\Lambda}\bar \stabconst_\Phi(1+\norm{m}^{d+\delta})$. Together with the stability of $\gamma_\Lambda\mapsto \Energy(\gamma_\Lambda)$, this yields the following lower bound for the conditional energy:
	\begin{equation*}
		H_\Lambda(\gamma_\Lambda\xi_{\Lambda^c})\geq -(A_\Psi\vee\stabconst_\Phi\vee\bar \stabconst_\Phi)\sum_{\x\in\gamma_\Lambda}(1+\norm{m}^{d+\delta}),
	\end{equation*}
	
%	 It is unfortunately not true -- as used instead in Section 4 of \cite{roelly_zass_2020} -- that we can control the cardinality of the second sum, i.e. the number of points of $\xi_\Delta$, \emph{uniformly in $\gamma$}. On the other hand, thanks to Lemma \ref{lem:diff:admissible}, we can assume that $\xi_\Delta$ is of finite energy, and therefore use \eqref{eq:diff:hcstab} to estimate
%	\begin{equation*}
%	\begin{split}
%		&\sum_{\x_i\in\gamma_\Lambda}\sum_{\x_j\in\xi_{\Delta\setminus\Lambda}} \Phi(\x_i,\x_j) \\
%		&\phantom{\sum_{\x_i\in\gamma_\Lambda}}= \int_0^{\interval} \sum_{\x_i\in\gamma_\Lambda}\sum_{\x_j\in\xi_{\Delta\setminus\Lambda}} \phi(\abs{x_i-x_j+m_i(s)-m_j(s)})ds\, \1_{\{\lvert x_i - x_j\rvert\leq a_0 + \norm{m_i} + \norm{m_j}\}}\\
%		&\phantom{\sum_{\x_i\in\gamma_\Lambda}}\overset{\mathclap{\eqref{eq:diff:hcstab}}}{\geq}\  \int_0^{\interval} \sum_{\x_i\in\gamma_\Lambda} -2\stabconst_\Phi \geq -2\stabconst_\Phi \abs{\gamma_\Lambda}. 
%	\end{split}
%	\end{equation*}
%	Together with the stability of $\gamma_\Lambda\mapsto \Energy(\gamma_\Lambda)$, this yields the following lower bound for the conditional energy:
%	\begin{equation*}
%		H_\Lambda(\gamma_\Lambda\xi_{\Lambda^c})\geq -(A_\Psi\vee 2\beta\stabconst_\Phi)\sum_{\x\in\gamma_\Lambda}(1+\norm{m}^{d+\delta}),
%	\end{equation*}	
	\end{description}

	We can now apply, having checked its three conditions \hyp{st.}, \hyp{r}, and \hyp{loc.st.}, Theorem 1 of \cite{roelly_zass_2020}: there exists an infinite-volume Gibbs measure $P^{z,\beta}\in\GibbsT$. 
	
	%The correlation functions of a Gibbs point process can be written as in \eqref{eq:diff:corrGibbs} whenever the term $e^{-\beta\sum_i\sum_{\y\in\xi}\Phi(\x_i,\y)}$ is well defined. Thanks to \eqref{eq:diff:condStab}, this is indeed the case, as we have $\sum_{\y\in\xi}\Phi(\x,\y)\geq -\bar \stabconst_\Phi(1+\norm{m}^{d+\delta})$.
\end{proof}

%\setcounter{Example}{1}
%\begin{Excont}[continued]
%	In order to obtain the lower bound for the conditional energy, in the context of Example \ref{ex:diff:0}, it is unfortunately not true -- as used instead in Section 4 of \cite{roelly_zass_2020} -- that we can control the cardinality of the second sum, i.e. the number of points of $\xi_\Delta$, \emph{uniformly in $\gamma$}. On the other hand, thanks to Lemma \ref{lem:diff:admissible}, we can assume that $\xi_\Delta$ is of finite energy, and therefore use \eqref{eq:diff:hcstab} to estimate
%	\begin{equation*}
%	\begin{split}
%		&\sum_{\x_i\in\gamma_\Lambda}\sum_{\x_j\in\xi_{\Delta\setminus\Lambda}} \Phi(\x_i,\x_j) \\
%		&\phantom{\sum_{\x_i\in\gamma_\Lambda}}= \int_0^{\interval} \sum_{\x_i\in\gamma_\Lambda}\sum_{\x_j\in\xi_{\Delta\setminus\Lambda}} \phi(\abs{x_i-x_j+m_i(s)-m_j(s)})ds\, \1_{\{\lvert x_i - x_j\rvert\leq a_0 + \norm{m_i} + \norm{m_j}\}}\\
%		&\phantom{\sum_{\x_i\in\gamma_\Lambda}}\overset{\mathclap{\eqref{eq:diff:hcstab}}}{\geq}\  \int_0^{\interval} \sum_{\x_i\in\gamma_\Lambda} -2\stabconst_\Phi \geq -2\stabconst_\Phi \abs{\gamma_\Lambda}. 
%	\end{split}
%	\end{equation*}
%	Together with the stability of $\gamma_\Lambda\mapsto \Energy(\gamma_\Lambda)$, this yields the following lower bound for the conditional energy:
%	\begin{equation*}
%		H_\Lambda(\gamma_\Lambda\xi_{\Lambda^c})\geq -\beta(A_\Psi\vee 2\stabconst_\Phi)\sum_{\x\in\gamma_\Lambda}(1+\norm{m}^{d+\delta}).
%	\end{equation*}
%\end{Excont}

We also obtain a Ruelle bound for the correlation functions of any tempered Gibbs point process.

\begin{Definition}
	Consider a configuration $\gamma\in\confs$. For any $N\geq 1$, its \emph{factorial measure of order $N$} is given by 
	\begin{equation*}
		\gamma^{(N)}(d\x_1,\dots,d\x_N) \defeq \gamma(d\x_1)(\gamma\setminus\{\x_1\})(d\x_2)\dots(\gamma\setminus\{\x_1,\dots,\x_{N-1}\})(d\x_N).
	\end{equation*}
	By taking the expectation (if it exists) under a point process $P$, we obtain its \emph{$N$-th factorial moment measure}: a measure $\alpha_N^{(P)}$ on $\states^N$ defined by
	\begin{equation*}
		\alpha_N^{(P)}(\cdot) \defeq \IE_P[\gamma^{(N)}(\cdot)].
	\end{equation*}
\end{Definition}
One can then consider, for any $N\geq 1$, its $N$-point correlation function, defined as the Radon--Nikodym derivative of its $N$-th factorial moment measure $\alpha_N^{(P)}$ with respect to the product measure $(z\sigmab)^{\otimes N}$, where
\begin{equation}\label{eq:diff:refSelf}
	\sigmab(d\x) \defeq e^{-\beta\, \Psi(\x)}\lambda(d\x).		
\end{equation}

\begin{Proposition}[\cite{nguyen_zessin_1979}]\label{prop:Ncorr}
	Let $P\in\Gibbs$, $z>0$, $\beta > 0$. Its \emph{$N$-point correlation function} admits, for $\sigmab^{\otimes N}$-almost all $(\x_1,\dots,\x_N)\in\states^N$, the following representation:
	\begin{equation}\label{eq:diff:corrGibbs}
		\corr{N}{P}(\x_1,\dots,\x_N) = e^{-\beta \PairEnergy(\x_1,\dots,\x_N)} \int_\confs e^{-\beta \CondEn{\x_1,\dots,\x_N}{\xi}}P(d\xi).
	\end{equation}
\end{Proposition}
\begin{Remark}
%	\comment{Note that the correlation functions of a given Gibbs point process $P$ are not necessarily well defined, as they involve the conditional energy of a finite configuration, given an infinite boundary condition. In our framework, they are indeed well defined (see Remark \ref{rmk:hyp2}).}
	Note that $\corr{N}{P}(\cdot)$ is a symmetric function, as for any $(\x_1,\dots,\x_N)\in\states^k$ and any permutation $\{i_1,\dots,i_N\}$, $\corr{N}{P}(\x_{i_1},\dots,\x_{i_N}) = \corr{N}{P}(\x_1,\dots,\x_N)$. 
\end{Remark}

\begin{Proposition}\label{prop:diff:existenceRB}
	For any $N\geq 1$, the $N$-point correlation function $\corr{N}{P^{z,\beta}}$ of any Gibbs point process $P^{z,\beta}\in\GibbsT$ satisfy a \emph{Ruelle bound}: for $\sigmab^{\otimes N}$-almost all $(\x_1,\dots,\x_N)\in\states^N$,
	\begin{equation}
		\corr{N}{P^{z,\beta}}(\x_1,\dots,\x_N)\leq \prod_{i=1}^N \constan(\x_i),
	\end{equation}
	where $\constan(x,m) \defeq \exp\left(\beta(\stabconst_\Phi+\bar\stabconst_\Phi)(1+\norm{m}^{d+\delta})\right).$
	\end{Proposition}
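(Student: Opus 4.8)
The plan is to start from the integral representation \eqref{eq:diff:corrGibbs} of the correlation functions supplied by Proposition \ref{prop:Ncorr}, which is available here precisely because Theorem \ref{thm:diff:existence} guarantees that the correlation functions of $P^{z,\beta}$ exist and admit this form. Writing
\[
	\corr{N}{P^{z,\beta}}(\x_1,\dots,\x_N) = e^{-\beta \PairEnergy(\x_1,\dots,\x_N)} \int_\confs e^{-\beta \CondEn{\x_1,\dots,\x_N}{\xi}}\,P^{z,\beta}(d\xi),
\]
I would bound the two factors separately, aiming in each case for a product over $i=1,\dots,N$.

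For the deterministic prefactor, the stability assumption \hyp{st.} in the form \eqref{eq:diff:stab} gives $\PairEnergy(\x_1,\dots,\x_N)\geq -\stabconst_\Phi N$, hence $e^{-\beta\PairEnergy(\x_1,\dots,\x_N)}\leq \prod_{i=1}^N e^{\beta\stabconst_\Phi}$. For the integrand, I would first use the additive decomposition $\CondEn{\x_1,\dots,\x_N}{\xi}=\sum_{i=1}^N \CondEn{\x_i}{\xi}$, which holds by the very definition of the conditional energy of a configuration, and then apply the local-stability assumption \hyp{loc.st} in the form \eqref{eq:diff:condStab} to each term $\CondEn{\x_i}{\xi}\geq -\bar\stabconst_\Phi(1+\norm{m_i}^{d+\delta})$. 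This yields, for every $\xi\in\confs^\tempered$,
\[
	e^{-\beta \CondEn{\x_1,\dots,\x_N}{\xi}}\leq \prod_{i=1}^N e^{\beta\bar\stabconst_\Phi(1+\norm{m_i}^{d+\delta})},
\]
a bound that is \emph{uniform in $\xi$} and therefore passes unchanged through the integration against $P^{z,\beta}$.

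The only point requiring care is that \hyp{loc.st} is stated for tempered boundary conditions $\xi\in\confs^\tempered$; this is exactly why the argument must be carried out for $P^{z,\beta}\in\GibbsT$, whose support lies in the tempered configurations, so that the estimate above holds for $P^{z,\beta}$-almost every $\xi$. Multiplying the two bounds and recognising that $e^{\beta\stabconst_\Phi}\cdot e^{\beta\bar\stabconst_\Phi(1+\norm{m_i}^{d+\delta})}=\constan(\x_i)$ delivers the claimed Ruelle bound. Because both estimates are explicit and uniform, I do not anticipate a genuine obstacle here; the one subtlety worth flagging is the well-definedness (and integrability) of the right-hand side of \eqref{eq:diff:corrGibbs}, which is however already ensured by \eqref{eq:diff:condStab} as noted at the end of the proof of Theorem \ref{thm:diff:existence}.
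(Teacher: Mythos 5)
Your proposal is correct and follows exactly the paper's own argument: starting from the representation \eqref{eq:diff:corrGibbs}, bounding the prefactor via the stability \eqref{eq:diff:stab} and the integrand via the local stability \eqref{eq:diff:condStab} applied termwise through the additive decomposition of the conditional energy, with the integral over tempered configurations handled by the uniformity of the bound in $\xi$. The additional remarks on temperedness of the support and well-definedness of \eqref{eq:diff:corrGibbs} are consistent with what the paper establishes in Theorem \ref{thm:diff:existence}.
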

	\begin{proof}
		Putting together \eqref{eq:diff:stab} and \eqref{eq:diff:condStab}, we estimate
	\begin{equation*}
		\corr{N}{P^{z,\beta}}(\x_1,\dots,\x_N)\leq e^{\beta\sum_{i=1}^N\stabconst_\Phi (1+\norm{m_i}^{d+\delta})} \int_{\confs^\tempered} e^{\beta\sum_{i=1}^N\bar \stabconst_\Phi(1+\norm{m_i}^{d+\delta})}P^{z,\beta}(d\xi),
	\end{equation*}
	yielding the desired bound.	
\end{proof}

\setcounter{Example}{1}
\begin{Excont}[continued]
	For the class of potentials described in Example \ref{ex:diff:0}, the Ruelle bound holds uniformly in $(\x_1,\dots,\x_N)\in\states^N$, and is of the form \begin{equation}\label{eq:diff:exunifRB}
 	\corr{N}{P^{z,\beta}}(\x_1,\dots,\x_N)\leq e^{3\beta\stabconst_\phi N}.
\end{equation}
\end{Excont}

\begin{Remark}\label{rmk:KS}
\comment{One could use the Kirkwood--Salsburg approach, presented in Section \ref{sec:diff:KS} in the context of uniqueness, to prove the existence of an infinite-volume Gibbs point process as well, in some activity regime. %$z<\zEx$.

However, while Kirkwood--Salsburg yields a limiting correlation function, one still needs to prove that it is indeed the correlation function of a Gibbs point process.
In order to do this, one needs additional regularity conditions (other than Assumption \ref{hyp:diff:4} below) on the potential. In Theorem 3 of \cite{poghosyan_zessin_2020}, for example, the authors propose sufficient assumptions (like \emph{modified regularity}) for existence which are -- to the best of our knowledge -- as weak as possible for the method. 

For this reason, we used here the entropy method (which yields an existence result for any $z>0$) to improve the quality of the result.}
\end{Remark}

\section{Ruelle bounds for correlation functions}\label{sec:diff:RBcorrelation}

At the end of the previous section, we have seen how a Ruelle bound for the correlation functions of a Gibbs point process holds. This is an essential estimate when proving uniqueness of such a process, as we will see in Section \ref{sec:diff:KS}.

Suppose now that you have a pair potential $\Phi$ -- not necessarily satisfying the assumptions of the previous section -- and that you already have an infinite-volume Gibbs point process $P\in\Gibbs$, not necessarily constructed as above. Using tools from cluster expansion (see, for example, \cite{ruelle_1963, ruelle_1970}) we find, for any inverse temperature $\beta>0$, a domain of activity $(0,\zRuelle)$ such that, for any $z \in(0,\zRuelle)$, the correlation functions of $P$ satisfy a Ruelle bound. An important tool is given by the Ursell kernel (see the work by R.A. Minlos and S. Poghosyan in \cite{minlos_poghosyan_1977}), introduced here in Subsection \ref{sec:diff:TreeEstimates}.

In this section we work under Assumptions \ref{hyp:diff:0c}+\ref{hyp:diff:2}, i.e. the classical stability assumption \eqref{eq:diff:stabunif} and an additional regularity condition, introduced in the next subsection. 
%As this method requires the correlation functions have a specific representation, we assume a priori here that, for any $N\geq 1$, the expression \eqref{eq:diff:corrGibbs} for the $N$-point correlation function $\corr{N}{P}$ of any $P\in\Gibbs$ is well defined.

\subsection{Correlation functions}

While we have so far decomposed the energy functional in \eqref{eq:diff:energy} into self- and pair-interaction terms, in order to set ourselves in the framework of cluster expansion -- that typically deals exclusively with pair interactions -- in what follows we include the self-interaction term in the reference measure and define, for $z>0$, the measure
\begin{equation*}
	\tilde\pi^{z\sigmab} = \sum_{N=0}^{+\infty} \frac{z^N}{N!}\sigmab^{\otimes N},
\end{equation*}
%[DAI DEFINIZIONE CON MISURA $\mu$ GENERALE]
and the corresponding Poisson point process $\pi^{z\sigmab}$, where $\sigmab$ is as in \eqref{eq:diff:refSelf}. The finite-volume Gibbs point process $P^{z,\beta}_\Lambda$ defined in \eqref{eq:diff:finiteGibbs} on $\confs_\Lambda$ can then be equivalently defined using $\pi^{z\sigmab}$ and just the pair interaction $\PairEnergy(\gamma) = \sum_{\{\x,\y\}\subset\gamma}\Phi(\x,\y)$ (in place of the full energy functional $\Energy$):
\begin{equation*}
	P^{z,\beta}_\Lambda(d\gamma) = \dfrac{1}{Z^{z\sigmab,\beta}_\Lambda}e^{-\beta\PairEnergy(\gamma_{\Lambda})}\, \pi^{z\sigmab}_{\Lambda}(d\gamma),
\end{equation*}
where $Z_\Lambda^{z\sigmab,\beta}$ is the normalisation constant.

As we have already mentioned, the proof of the uniqueness of the Gibbs point process revolves around the study of its correlation functions, which we now introduce.
We start by introducing a finite-volume correlation function induced by the interaction $\Phi$:
\begin{Definition}
	Let $\beta > 0$, $z>0$. For any finite volume $\Lambda\subset\IR^d$, the \emph{finite-volume correlation function} $\corr{\Lambda}{z,\beta}$ in $\Lambda$ (with free boundary condition) is given, for any $\gamma\in\confs_\Lambda$, by
	\begin{equation*}
		\corr{\Lambda}{z,\beta}(\gamma) = \dfrac{1}{\tilde Z^{z\sigmab,\beta}_\Lambda}\int_{\confs_\Lambda} e^{-\beta\PairEnergy(\xi\gamma)}\tilde \pi_\Lambda^{z\sigmab}(d\xi),
	\end{equation*}
	where $\tilde{Z}^{z\sigmab,\beta}_\Lambda$ is the normalisation constant.
\end{Definition}

\setcounter{Assumption}{1}
\begin{AssumptionOpt}{$^*$}[Classical stability]\label{hyp:diff:0c}
	Consider an energy functional $\Energy$ as in \eqref{eq:diff:energy}, where the self-potential $\Psi$ satisfies \eqref{eq:diff:Psi}, but for which the stability condition \hyp{t.st.} of the pair potential $\Phi$ is replaced by a stronger one:
\begin{description}[font = \normalfont,align=left]
	\item[\hyp{st.}] There exists a constant $\stabconst_\Phi\geq 0$ such that for any finite configuration $\gamma = \{\x_1,\dots,\x_N\}\in\confs_f$,
	\begin{equation}\label{eq:diff:stabunif}
		\PairEnergy(\gamma) = \sum_{1\leq i<j\leq N} \Phi(\x_i,\x_j) \geq - \stabconst_\Phi N.
	\end{equation}
\end{description}	
\end{AssumptionOpt}

%In the following, we 
%%fix the inverse temperature parameter $\beta > 0$ [VA DA UN'ALTRA PARTE], and 
%consider energy functionals $\Energy$ such that Assumption \ref{hyp:diff:0} holds with the following \emph{classical} stability:
%\begin{description}[font = \normalfont,align=left]
%	\item[\hyp{st.}] There exists a constant $\stabconst_\Phi\geq 0$ such that for any finite configuration $\gamma = \{\x_1,\dots,\x_N\}\in\confs_f$,
%	\begin{equation}\label{eq:diff:stabunif}
%		\PairEnergy(\gamma) = \sum_{1\leq i<j\leq N} \Phi(\x_i,\x_j) \geq - \stabconst_\Phi N.
%	\end{equation}
%\end{description}
\begin{Remark}
	Clearly, \hyp{st.} implies \hyp{t.st.}. Moreover, note that, from the stability 
	%\eqref{eq:diff:stabunif} 
	of the pair potential $\Phi$, there exists a functional
	\begin{equation*}
		\msb{i}:\confs_f\setminus\{\underline{o}\}\rightarrow\states		
	\end{equation*}
	such that for any non-empty configuration $\gamma$ there exists $\msb{i}(\gamma)\in\gamma$ where the sum of its interactions with the other marked points in $\gamma$ is bounded from below: 
	\begin{equation}\label{eq:diff:I}
		\forall \gamma\in \confs_f\setminus\{\underline{o}\},\quad \CondEn{\msb{i}(\gamma)}{\gamma\setminus\{\msb{i}(\gamma)\}} \geq -2\stabconst_{\Phi}.
	\end{equation}
	As a consequence, $\Phi$ is bounded from below by $-2\stabconst_\Phi$:
	\begin{equation}\label{eq:diff:bounded}
		\inf \Phi(\x,\y) \geq -2\stabconst_\Phi.
	\end{equation}
	In Example \ref{ex:diff:potential2} below we make use of \eqref{eq:diff:bounded}, while \eqref{eq:diff:I} is used in Proposition \ref{prop:diff:Q}. 
\end{Remark}

We further assume that:
\addtocounter{Assumption}{2}
\begin{AssumptionOpt}{}[regularity]\label{hyp:diff:2}
	The pair potential $\Phi$ satisfies the following uniform \emph{regularity condition}:
	%(for some, and therefore any, $\beta>0$):
	\begin{equation*}
		\RuelleC\defeq \sup_{\x\in\states}\int_\states\abs{e^{-\beta\Phi(\x,\y)}-1}\sigmab(d\y) < +\infty.
	\end{equation*}
\end{AssumptionOpt}

\setcounter{Example}{1}
\begin{Excont}[continued]\label{ex:diff:potential2}
	If the potential $\phi = \phi_{hc}+\phi_l$ is \emph{integrable} outside of the hard core, that is 
	\begin{equation*}
		\norm{\phi}_{R_+}\defeq \int_R^{+\infty} \abs{\phi_l(u)}u^{d-1}du<+\infty,
	\end{equation*}
	then Assumption \ref{hyp:diff:2} holds. Indeed, since for any $x\in(-\infty,+\infty]$, 
	\begin{equation}\label{eq:diff:basicbound}
		\abs{e^{-x}-1}\leq x^-e^{x^-} + (1-e^{-x^+})\leq \abs{x} e^{x^-},		
	\end{equation}
	where $x^-\defeq \max(0,-x)$ and $x^+ \defeq \max(0,x)$ are the negative and positive part of x, respectively, we have
	\begin{equation*}
		\lvert e^{-\beta\Phi}-1\rvert \leq \beta\abs{\bar \Phi}e^{2\beta\stabconst_\Phi},
		%\lvert \bar\Phi\rvert e^{\Phi^-}\leq \lvert\bar\Phi\rvert e^{2\stabconst_\phi}.
	\end{equation*}
	where we denote by $\bar\Phi$ the \emph{truncated pair potential}, defined, for any $\x,\y\in\states$, by
	\begin{equation*}
		\bar\Phi(\x,\y) = \begin{cases}
 			1 &\text{ if } \Phi(\x,\y)=+\infty\\
 			\Phi(\x,\y) &\text{ otherwise}.
 		\end{cases}
	\end{equation*}
	Let $\bar\phi(u)\defeq \1_{\{u < R\}} + \phi(u)\1_{\{u\geq R\}}$.
	Using the above bound, we can estimate, for any $\x_1\in\states$, 
	\begin{equation*}
	\begin{split}
		&\int_\states\abs{e^{-\beta\Phi(\x_1,\x_2)}-1}\sigmab(d\x_2)\leq e^{2\beta\stabconst_\phi}\int_\states\beta\abs{\bar\Phi(\x_1,\x_2)}\, \sigmab(d\x_2)\\
		&\leq e^{2\beta\stabconst_\Phi}\int_\states \int_0^{\interval} \beta\abs{\bar\phi(x_2 + m_2(s) - x_1 - m_1(s))}ds\ \1_{\{\lvert x_2-x_1\rvert\leq a_0 + \norm{m_2} + \norm{m_1}\}} \, \sigmab(d\x_2)\\
		&\overset{\eqref{eq:diff:Psi}}{\leq} e^{2\beta\stabconst_\Phi}\int_{C_0}\int_0^{\interval}\int_{\IR^d} \beta\abs{\bar\phi(x_2 + m_2(s) - x_1 - m_1(s))}\, dx_2\, ds\, e^{\constPsi\norm{m_2}^{d+\delta}}\refmark(dm_2)\\
		&\leq e^{2\beta\stabconst_\Phi}\beta \left(b_d R^d + \norm{\phi}_{R_+}\right) \int_{C_0}e^{\constPsi\norm{m_2}^{d+\delta}}\refmark(dm_2),
	\end{split}
	\end{equation*}
	which is finite thanks to the ultra-contractivity assumption, see \eqref{eq:diff:delta}.
	
	\medskip
	\noindent\emph{A particular case:} Suppose the potential $\phi$ is given by the sum of a hard-core potential $\phi_{hc}$ in $[0,R)$ and the Lennard--Jones potential $\phi_l\equiv\phi_{LJ}$ in $[R,+\infty)$. In particular, it is finite in $[R,+\infty)$, with maximum $\phi_l(R)$. Pictured in Figure \ref{fig:Penrose} is an example with $R=1$.
	\begin{figure}[t]
	\begin{center}
	\begin{minipage}{.9\textwidth}
	\begin{minipage}[b]{.5\textwidth}
		\includegraphics[width=1.2\textwidth]{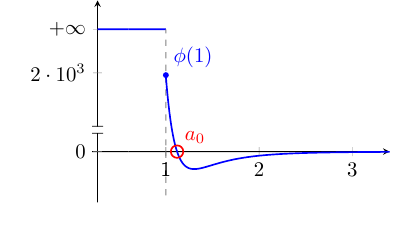}
	\end{minipage}\hfill
	\begin{minipage}[b]{.4\textwidth}
		\caption{The sum of a hard-core potential $\phi_{hc}$ and the Lennard--Jones potential $\phi_{LJ}$. The integrable component $\phi_{LJ}$ of the potential has a maximum in $1$.}
		\label{fig:Penrose}
	\end{minipage}
	\end{minipage}
	\end{center}
	\end{figure}
	\end{Excont}
	
\subsection{Cluster expansion: Ursell kernel and tree-graph estimates}\label{sec:diff:TreeEstimates}

In this subsection, after introducing the \emph{Ursell kernel}, we use it to rewrite the correlation functions of a Gibbs point process and -- following an approach inspired by \cite{kks_1998,kuna_1999} -- use \emph{tree-graph estimates} to obtain a Ruelle bound for them. Our innovation comes from being able to obtain that the correlation functions of any Gibbs point process satisfy a Ruelle bound with the same constant $\constanz$, uniformly in the finite volume, therefore yielding uniqueness in the set of tempered Gibbs point processes.

We consider here \emph{undirected connected graphs}. For any non-empty set $V\subset\IR^d$, a \emph{graph} $G\equiv G(V)$ on $V$ is given by the pair $(V,E)$, where $V$ is the vertex set, and the set of edges $E$ is a subset of $\{\{x,y\}\subset V:x\neq y\}$. Indeed, we write $\{x,y\}\in G$ to denote the edge $xy\in E$ between two vertices $x,y\in V$. A \emph{tree} $T$ is a connected graph without loops. We also introduce the following notations:
\begin{itemize}
	\item $\mathscr{C}_n(V)$ denotes the set of all undirected connected graphs with $n$ vertices belonging to $V$. 
	\item $\mathscr{T}(V)$ denotes the set of all \emph{trees} on $V$.
\end{itemize}
Note that the notion of graph $G=(V,E)\in\mathscr{C}_n(V)$ does not depend on the possible orderings of the points of the vertex set $V=\{x_1,\dots,x_n\}\subset\IR^d$. Moreover, when there is no risk of confusion, we identify a graph $G$ on $\{x_1,\dots,x_n\}$ with the corresponding one on the index set $\{1,\dots,n\}\in\IN$ (i.e. where the edge $\{x_i,x_j\}$ corresponds with the edge $\{i,j\}$, see Figure \ref{fig:diff:Tree}).

When using these notations on a finite configuration $\gamma\subset\IR^d\times \marks$, with an abuse of notations, we write $\mathscr{C}_n(\gamma)$ as shorthand for $\mathscr{C}_n(\text{proj}_{\IR^d}(\gamma))$ (analogously for $\mathscr{T}$). 
\begin{figure}[ht]
\begin{center}
\begin{minipage}{.9\textwidth}
	\begin{minipage}[]{.3\textwidth}
		\includegraphics{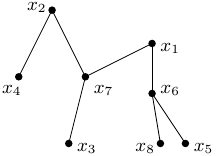}
	\end{minipage}\hfill
	\begin{minipage}[]{.6\textwidth}
		\caption{Example of a treee $T\in\mathscr{T}(V)$, where $V = \{x_1,\dots,x_8\}\subset\IR^2$. It can be equivalently described by placing the points of $V$ on the vertices of a tree $\tilde T$ on $\{1,\dots,8\}\in\IN$. More precisely, $\tilde T$ on $\{1,\dots,8\}$ is constructed by placing an edge $\{i,j\}\in \tilde T$ if and only if there is an edge $\{x_i,x_j\}\in T$.}
		\label{fig:diff:Tree}
	\end{minipage}
\end{minipage}
\end{center}
\end{figure}	

\begin{Definition}
	For any two measurable functionals $F,G\colon\confs_f\rightarrow\IR$, define their \emph{$*$-product} by
\begin{equation*}
	(F*G)(\gamma)\defeq \sum_{\xi\subset\gamma} F(\gamma\setminus\xi)G(\xi), \quad \gamma\in\confs_f.
\end{equation*}
with identity $1^*(\gamma) \defeq \1_{\{\gamma=\underline{o}\}}$. The space of measurable functionals with this operation is an algebra $\mathcal{A}$. Moreover, the set
\begin{equation*}
	\mathcal{A}_0\defeq \{ F\in\mathcal{A}\colon F(\underline{o}) = 0\}	
\end{equation*}
is an ideal of $\mathcal{A}$.
The \emph{exponential} and \emph{logarithm operators} are defined by
\begin{equation*}
	\exp^*F \defeq \sum_{n\geq 0}\tfrac{1}{n!}F^{*n},\quad \log^*(1^*+F)\defeq \sum_{n\geq 1}\tfrac{(-1)^{n-1}}{n} F^{*n}.
\end{equation*}
\end{Definition}

\begin{Definition}[Ursell function and kernel]
We introduce the two following notions:
\begin{itemize}
	\item The \emph{Ursell function} $k\colon\confs_f\rightarrow\IR$ is a functional on finite configurations, defined by setting
	\begin{equation*}
		k(\gamma) \defeq \log^*(e^{-\beta\PairEnergy})(\gamma), \quad \gamma\in\confs_f.
	\end{equation*}
	Equivalently (\cite{kks_1998}, Proposition 4.3), $k(\underline{o}) = 0$ and, for any $\gamma$ with $\abs{\gamma}=n\geq 1$,
	\begin{equation*}
		k(\gamma) = \sum_{G\in\mathscr{C}_n(\gamma)}\prod_{\{\x,\y\}\in G} \left( e^{-\beta\Phi(\x,\y)} - 1\right).
	\end{equation*}
	\item The \emph{Ursell kernel} $\bar{k}\colon\confs_f\times\confs_f\rightarrow\IR$ is defined on disjoint configurations by
	\begin{equation*}
		\bar k(\gamma,\xi) \defeq \left[\exp^*(-k)*(e^{-\beta\PairEnergy})(\gamma,\cdot)\right](\xi), \quad\gamma,\xi\in\confs_f,\ \gamma\cap\xi=\underline{o}.
	\end{equation*}
\end{itemize}
\end{Definition}
The Ursell kernel relates to the Ursell function as follows:
\begin{Lemma}[\cite{kks_1998}, Lemma 4.6]\label{lem:diff:KbarK}
	For any finite configuration $\gamma\neq\underline{o}$,
	\begin{equation*}
		\forall \x\in\gamma,\quad \bar k(\{\x\},\gamma\setminus\{\x\}) = k(\gamma).	
	\end{equation*}
\end{Lemma}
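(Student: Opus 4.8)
The plan is to prove the identity entirely within the commutative $*$-algebra $\mathcal{A}$, reducing it to two facts. Write $w\defeq e^{-\beta\PairEnergy}$, so that $w(\underline{o})=1$ and $w-1^*\in\mathcal{A}_0$. Since $k=\log^* w$ and $\mathcal{A}$ is commutative, the formal inversion of $\log^*$ by $\exp^*$ gives $\exp^* k=w$; the relevant series are locally finite, as $F^{*n}(\gamma)=0$ whenever $n>\abs{\gamma}$ for $F\in\mathcal{A}_0$, so no convergence issue arises. Because $\exp^*$ is a homomorphism on $\mathcal{A}_0$ and $k\in\mathcal{A}_0$ (recall $k(\underline{o})=0$), I would then record the formal collapse
\begin{equation*}
	\exp^*(-k)*w=\exp^*(-k)*\exp^*(k)=1^*.
\end{equation*}

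The only non-formal ingredient, and the step I expect to be the main obstacle, is a rooted decomposition of the Boltzmann functional: for a fixed path $\x$, the claim is that, as functionals of $\xi\in\confs_f$ with $\x\notin\xi$,
\begin{equation*}
	e^{-\beta\PairEnergy(\{\x\}\xi)}=\big(k(\{\x\}\,\cdot\,)*w\big)(\xi),
\end{equation*}
where $k(\{\x\}\,\cdot\,)$ denotes the functional $\xi\mapsto k(\{\x\}\xi)$. To prove it I would expand $w(\{\x\}\xi)=\prod_{\{\y,\z\}}\big(1+(e^{-\beta\Phi(\y,\z)}-1)\big)$ over the complete graph on $\{\x\}\cup\xi$ as the sum over all graphs $G$ of $\prod_{\{\y,\z\}\in G}(e^{-\beta\Phi(\y,\z)}-1)$, and split each $G$ according to the vertex set $\{\x\}\cup\xi'$ of the connected component containing $\x$. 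The component contributes the sum over connected graphs on $\{\x\}\cup\xi'$, which equals $k(\{\x\}\xi')$ by the graph representation of the Ursell function, while the remaining vertices $\xi\setminus\xi'$ carry an arbitrary graph contributing $w(\xi\setminus\xi')$. Summing over $\xi'\subset\xi$ is precisely the $*$-product on the right-hand side.

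Granting these two facts, the statement is immediate. Writing $\eta\defeq\gamma\setminus\{\x\}$ and recalling that $\bar k(\{\x\},\eta)=[\exp^*(-k)*w(\{\x\}\,\cdot\,)](\eta)$, I substitute the rooted decomposition $w(\{\x\}\,\cdot\,)=k(\{\x\}\,\cdot\,)*w$ and use commutativity and associativity of $*$ to obtain
\begin{equation*}
	\bar k(\{\x\},\eta)=\big(k(\{\x\}\,\cdot\,)*(\exp^*(-k)*w)\big)(\eta)=\big(k(\{\x\}\,\cdot\,)*1^*\big)(\eta)=k(\{\x\}\eta)=k(\gamma),
\end{equation*}
where the second equality is the collapse $\exp^*(-k)*w=1^*$ and the third uses that $1^*$ is the $*$-identity. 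I would also record the base case $\eta=\underline{o}$ as a sanity check, where both sides reduce to $w(\{\x\})=k(\{\x\})=1$; apart from that, the entire difficulty is concentrated in correctly isolating the maximal connected component of $\x$ in the rooted decomposition, the remaining manipulations being purely formal.
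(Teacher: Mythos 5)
Your argument is correct: the formal collapse $\exp^*(-k)*\exp^*(k)=1^*$ in the locally finite commutative $*$-algebra, combined with the rooted decomposition $w(\{\x\}\,\cdot\,)=k(\{\x\}\,\cdot\,)*w$ obtained by isolating the connected component of $\x$ in the graph expansion of the Boltzmann weight, is exactly the standard proof of this identity. The paper itself gives no proof and simply cites Lemma 4.6 of the reference, where the argument proceeds along the same lines as yours, so there is nothing to flag.
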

Moreover, it provides a new expression for the correlation functions:
\begin{Lemma}[\cite{kks_1998}, Proposition 4.5]\label{lem:diff:barKrho}
	Let $\gamma\in\confs_\Lambda$, $\Lambda\in\mathcal{B}_b(\IR^d)$. If $\int_{\confs_\Lambda}\abs{k(\xi)}\tilde\pi^{z\sigmab}(d\xi)<+\infty$, then
	\begin{equation}\label{eq:diff:corrBark}
		\corr{\Lambda}{z,\beta}(\gamma) = 	\int_{\confs_\Lambda} \bar k(\gamma,\xi)\tilde\pi^{z\sigmab}(d\xi).
	\end{equation}
\end{Lemma}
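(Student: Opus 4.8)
The plan is to reduce the identity to one algebraic fact: integration against the Lebesgue--Poisson measure $\tilde\pi^{z\sigma}$ turns the $*$-convolution into an ordinary product. I would introduce the linear functional $\mathcal{L}(F)\defeq\int_{\confs_\Lambda}F(\xi)\,\tilde\pi^{z\sigma}(d\xi)$ and first prove that
\begin{equation*}
	\mathcal{L}(F*G)=\mathcal{L}(F)\,\mathcal{L}(G),\qquad \mathcal{L}(1^*)=1,
\end{equation*}
whenever $\mathcal{L}(\abs{F})$ and $\mathcal{L}(\abs{G})$ are finite. This is the standard convolution identity for the exponential measure: expanding $\tilde\pi^{z\sigma}=\sum_{N\geq0}\tfrac{z^N}{N!}\sigma^{\otimes N}$ and splitting a configuration of $N$ points into a subset of size $j$ and its complement (in $\binom{N}{j}$ ways) reorganises the resulting double sum as the Cauchy product of the two series defining $\mathcal{L}(F)$ and $\mathcal{L}(G)$. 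Iterating gives $\mathcal{L}(F^{*n})=\mathcal{L}(F)^n$, hence $\mathcal{L}(\exp^*F)=e^{\mathcal{L}(F)}$.

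With this tool the statement is almost immediate. Since $k=\log^*(e^{-\beta\PairEnergy})$ and $e^{-\beta\PairEnergy}(\underline{o})=1$, the mutual inverseness of $\exp^*$ and $\log^*$ on the $*$-algebra gives $\exp^*(k)=e^{-\beta\PairEnergy}$, so the normalisation constant satisfies
\begin{equation*}
	\tilde{Z}^{z\sigma}_\Lambda=\mathcal{L}\big(e^{-\beta\PairEnergy}\big)=\mathcal{L}\big(\exp^*(k)\big)=e^{\mathcal{L}(k)},
\end{equation*}
and therefore $\mathcal{L}(\exp^*(-k))=e^{-\mathcal{L}(k)}=1/\tilde{Z}^{z\sigma}_\Lambda$. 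Reading the definition of the Ursell kernel as a $*$-convolution in its second argument, $\bar k(\gamma,\cdot)=\exp^*(-k)*\big(e^{-\beta\PairEnergy}(\gamma,\cdot)\big)$, and applying multiplicativity in that argument yields
\begin{align*}
	\int_{\confs_\Lambda}\bar k(\gamma,\xi)\,\tilde\pi^{z\sigma}(d\xi)
	&=\mathcal{L}\big(\exp^*(-k)\big)\,\mathcal{L}\big(e^{-\beta\PairEnergy}(\gamma,\cdot)\big)\\
	&=\frac{1}{\tilde{Z}^{z\sigma}_\Lambda}\int_{\confs_\Lambda}e^{-\beta\PairEnergy(\gamma\xi)}\,\tilde\pi^{z\sigma}(d\xi),
\end{align*}
which is exactly $\corr{\Lambda}{z,\beta}(\gamma)$ by definition.

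The only genuine work is justifying the interchanges of summation and integration underlying both multiplicativity and $\mathcal{L}(\exp^*F)=e^{\mathcal{L}(F)}$, and this is precisely where the hypothesis $\int_{\confs_\Lambda}\abs{k(\xi)}\,\tilde\pi^{z\sigma}(d\xi)<+\infty$ is used. For $\exp^*(-k)$ I would dominate termwise via $\abs{(-k)^{*n}}\leq\abs{k}^{*n}$, so that $\mathcal{L}(\abs{\exp^*(-k)})\leq e^{\mathcal{L}(\abs{k})}<+\infty$; this both makes $\exp^*(-k)$ admissible and legitimises $\mathcal{L}(\exp^*(-k))=e^{-\mathcal{L}(k)}$. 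For the factor $e^{-\beta\PairEnergy}(\gamma,\cdot)$ I would invoke the stability bound \eqref{eq:diff:stab}, giving $e^{-\beta\PairEnergy(\gamma\xi)}\leq e^{\beta\stabconst_\Phi(\abs{\gamma}+\abs{\xi})}$; since $z\sigma$ is a finite measure on the bounded volume $\Lambda\times\marks$ (by \eqref{eq:diff:Psi} and \eqref{eq:diff:delta}), this integrand is $\tilde\pi^{z\sigma}$-integrable and the multiplicativity step is licit. I expect the combinatorial convolution identity itself to be routine, so the real obstacle is this bookkeeping of absolute convergence rather than any new idea.
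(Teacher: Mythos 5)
Your proof is correct, and it is essentially the standard argument for this identity (the paper itself gives no proof, importing the statement from \cite{kks_1998}, where it is established in exactly this way): the multiplicativity of $F\mapsto\int F\,d\tilde\pi^{z\sigma}$ with respect to the $*$-convolution, applied to $\bar k(\gamma,\cdot)=\exp^*(-k)*e^{-\beta\PairEnergy}(\gamma,\cdot)$, together with $\mathcal{L}(\exp^*(\pm k))=e^{\pm\mathcal{L}(k)}$ and the identification $\tilde Z^{z\sigma}_\Lambda=\mathcal{L}(e^{-\beta\PairEnergy})$. Your handling of the absolute-convergence issues -- domination of $\abs{(-k)^{*n}}$ by $\abs{k}^{*n}$ using the hypothesis $\int\abs{k}\,d\tilde\pi^{z\sigma}<+\infty$, and stability \eqref{eq:diff:stab} plus finiteness of $\sigma$ on $\Lambda\times\marks$ for the factor $e^{-\beta\PairEnergy(\gamma\,\cdot)}$ -- is precisely what is needed to license the Fubini interchanges.
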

\begin{Lemma}[\cite{kks_1998}, Remark 4.8]
	The Ursell kernel $\bar k$ is the unique solution of the so-called non-integrated Kirkwood--Salsburg equation
	\begin{equation}\label{eq:diff:ursell}
	\begin{cases}
		\bar{k}(\gamma,\xi) = e^{-\beta\sum_{\y\in\gamma\setminus\{\x\}}\Phi(\x,\y)}\sum_{\eta\subset\xi}k_{\eta}(\x)\bar{k}\big((\gamma\setminus\{\x\})\eta,\xi\setminus\eta\big)\\
		\bar{k}(\underline{o},\xi) = \1_{\{\xi=\underline{o}\}},
	\end{cases}
	\end{equation}
	where $k_\eta(\x)\defeq \displaystyle\prod_{\y\in\eta}\left(e^{-\beta\Phi(\x,\y)}-1\right)$, and $\x\in\gamma$ is chosen arbitrarily.
\end{Lemma}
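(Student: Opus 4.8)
The plan is to prove both claims purely algebraically, working inside the $*$-algebra $\mathcal{A}$ and exploiting that $\exp^*$ and $\log^*$ are mutually inverse on the ideal $\mathcal{A}_0$. Write $w\defeq e^{-\beta\PairEnergy}$; since $w(\underline{o})=1$ one has $w-1^*\in\mathcal{A}_0$, and because $k=\log^* w$ it follows that $w=\exp^* k$ and that $\exp^*(-k)$ is exactly the $*$-inverse of $w$, i.e.\ $\exp^*(-k)*w=1^*$. Unfolding the $*$-product in the second variable, the defining formula for the kernel reads $\bar k(\gamma,\xi)=\sum_{\alpha\subset\xi}\exp^*(-k)(\xi\setminus\alpha)\,w(\gamma\alpha)$, and this explicit representation is what I would manipulate throughout.

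The core computation is the derivation of the recursion. Fix $\x\in\gamma$ and set $\gamma'\defeq\gamma\setminus\{\x\}$. The only structural input needed is the additivity of the pair energy when one point is singled out, $\PairEnergy(\gamma\alpha)=\PairEnergy(\gamma'\alpha)+\sum_{\y\in\gamma'\alpha}\Phi(\x,\y)$, which factorises $w$ as
\begin{equation*}
	w(\gamma\alpha)=e^{-\beta\sum_{\y\in\gamma'}\Phi(\x,\y)}\Big(\prod_{\y\in\alpha}e^{-\beta\Phi(\x,\y)}\Big)w(\gamma'\alpha).
\end{equation*}
Expanding the product via $\prod_{\y\in\alpha}e^{-\beta\Phi(\x,\y)}=\prod_{\y\in\alpha}\big(1+(e^{-\beta\Phi(\x,\y)}-1)\big)=\sum_{\eta\subset\alpha}k_\eta(\x)$ and inserting this into the representation of $\bar k(\gamma,\xi)$, I would then interchange the two sums over $\eta\subset\alpha\subset\xi$ and re-index $\alpha=\eta\beta$ with $\beta\subset\xi\setminus\eta$. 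The inner sum over $\beta$ collapses to $\sum_{\beta\subset\xi\setminus\eta}\exp^*(-k)((\xi\setminus\eta)\setminus\beta)\,w(\gamma'\eta\beta)$, which is precisely $\bar k(\gamma'\eta,\xi\setminus\eta)$; this yields the first line of \eqref{eq:diff:ursell}. The boundary condition is immediate from the same representation: $\bar k(\underline{o},\xi)=\big(\exp^*(-k)*w\big)(\xi)=1^*(\xi)=\1_{\{\xi=\underline{o}\}}$.

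For uniqueness I would argue by induction on the total cardinality $n\defeq\abs{\gamma}+\abs{\xi}$. The decisive observation is that the right-hand side of the recursion only ever evaluates $\bar k$ at the pair $(\gamma'\eta,\xi\setminus\eta)$, whose total cardinality is $\abs{\gamma'\eta}+\abs{\xi\setminus\eta}=\abs{\gamma}-1+\abs{\xi}=n-1$; the energy prefactor and the coefficients $k_\eta(\x)$ are explicit. Thus, given the values of any putative solution on all pairs of total size $<n$, the equation (for an arbitrary but fixed choice of $\x\in\gamma$ when $\gamma\neq\underline{o}$, and the boundary condition when $\gamma=\underline{o}$) pins down its value on every pair of total size $n$. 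Since the base case $n=0$ gives $\bar k(\underline{o},\underline{o})=1$, the solution is uniquely determined, and it must coincide with $\bar k$.

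The genuinely delicate points are bookkeeping rather than conceptual: justifying that every series in sight truncates to a finite sum on finite configurations (so that $\exp^*$, $\log^*$ and the $*$-inverse are well defined pointwise and the identity $\exp^*(-k)*w=1^*$ really holds), and carrying out the re-indexing $\alpha=\eta\beta$ cleanly so that the inner sum is correctly identified with $\bar k(\gamma'\eta,\xi\setminus\eta)$. I expect the re-indexing of the double sum to be the main place where care is required.
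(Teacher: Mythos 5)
The paper offers no proof of this lemma; it is quoted verbatim from \cite{kks_1998}, Remark 4.8. Your argument is correct and is essentially the standard derivation: the representation $\bar k(\gamma,\xi)=\sum_{\alpha\subset\xi}\exp^*(-k)(\xi\setminus\alpha)\,e^{-\beta\PairEnergy(\gamma\alpha)}$, the factorisation of the Boltzmann weight over the singled-out point $\x$, the expansion $\prod_{\y\in\alpha}e^{-\beta\Phi(\x,\y)}=\sum_{\eta\subset\alpha}k_\eta(\x)$ with the re-indexing $\alpha=\eta\beta$, and induction on $\abs{\gamma}+\abs{\xi}$ for uniqueness all go through exactly as you describe, with every series truncating on finite configurations. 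The only point I would make explicit is that the energy additivity $\PairEnergy(\gamma\alpha)=\PairEnergy(\gamma'\alpha)+\sum_{\y\in\gamma'\alpha}\Phi(\x,\y)$ remains a valid identity of exponentials when $\Phi$ takes the value $+\infty$ (no $\infty-\infty$ can occur since $\Phi$ is bounded below under \hyp{st.}), so the factorisation of $w(\gamma\alpha)$ holds without restriction.
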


We now introduce a second functional $Q$, which satisfies a similar equation to \eqref{eq:diff:ursell}, dominates the Ursell kernel, and whose simpler expression allows for more convenient computations.
\begin{Definition}
	Consider a functional $Q$ on $\confs_f\times\confs_f$ defined as follows: for any $\xi\in\confs_f$, $Q(\underline{o},\xi) = \1_{\{\xi=\underline{o}\}}$, and for any $\gamma=\{\x_1,\dots,\x_N\}$, $N\geq 1$,
\begin{equation*}
	Q(\gamma,\xi) \defeq \sum_{\substack{\xi_1,\dots,\xi_N\subset\xi\\\xi_i\cap\xi_j=\underline{o}\,\forall i\neq j}} Q(\{\x_1\},\xi_1)	\cdots Q(\{\x_N\},\xi_N),
\end{equation*}
where
\begin{equation}\label{eq:diff:Qi}
\begin{cases*}
	Q(\{\x\},\xi)\defeq e^{2\beta\stabconst_\Phi(\abs{\xi}+1)}\sum_{T\in\mathscr{T}(\{x\}\cup\xi)}\prod_{\{\y_1,\y_2\}\in T}\abs{e^{-\beta\Phi(\y_1,\y_2)}-1}\quad \text{if }\xi\neq\underline{o}\\
	Q(\{\x\},\underline{o})=e^{2\beta\stabconst_\Phi}.
\end{cases*}
\end{equation}
\end{Definition}

\begin{Proposition}[\cite{kks_1998}, Proposition 4.10]\label{prop:diff:Q}
The functional $Q$ defined above is the unique solution of
\begin{equation*}
\begin{cases}
	Q(\gamma,\xi) = e^{2\beta\stabconst_{\Phi}}\sum_{\eta\subset\xi}\abs{k_\eta\left(\msb{i}(\gamma)\right)}Q\left(\gamma\setminus\msb{i}(\gamma)\cup\eta,\xi\setminus\eta\right)\\
	Q(\underline{o},\xi) = \1_{\{\xi=\underline{o}\}},
\end{cases}
\end{equation*}
where the functional $\msb{i}$ was defined in \eqref{eq:diff:I}.
\end{Proposition}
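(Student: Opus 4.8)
The plan is to establish the two assertions separately: that $Q$ solves the displayed Kirkwood--Salsburg-type equation, via a combinatorial forest decomposition, and that the solution is unique, via a downward induction on the total number of points.

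First I would rewrite $Q$ in closed form. Unfolding the factorised definition of $Q(\gamma,\xi)$ over the covering partitions $\{\xi_\x\}_{\x\in\gamma}$ of $\xi$ into (possibly empty) disjoint blocks indexed by the points of $\gamma$, and inserting the tree expression \eqref{eq:diff:Qi} for each single-point factor $Q(\{\x\},\xi_\x)$, I see that every resulting term is indexed by a spanning rooted forest $F$ on the vertex set $\gamma\cup\xi$ whose trees contain exactly one point of $\gamma$ each (the block $\xi_\x$ recording which points of $\xi$ land in the tree rooted at $\x$). Since each single-point factor carries the exponential weight $e^{2\beta\stabconst_\Phi(\abs{\xi_\x}+1)}$ and the partition covers $\xi$, these weights combine into $e^{2\beta\stabconst_\Phi\sum_{\x\in\gamma}(\abs{\xi_\x}+1)}=e^{2\beta\stabconst_\Phi(\abs{\gamma}+\abs{\xi})}$, a factor independent of $F$. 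Thus
\[
	Q(\gamma,\xi)=e^{2\beta\stabconst_\Phi(\abs{\gamma}+\abs{\xi})}\sum_{F}\ \prod_{\{\y_1,\y_2\}\in F}\abs{e^{-\beta\Phi(\y_1,\y_2)}-1},
\]
the sum running over all such forests, and for $\gamma=\underline{o}$ this reduces to $\1_{\{\xi=\underline{o}\}}$.

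To verify the equation I would fix $\x^*\defeq\msb{i}(\gamma)$ and peel off the edges of $F$ incident to $\x^*$. As the tree of $F$ containing $\x^*$ holds no other point of $\gamma$, the neighbour set $\eta$ of $\x^*$ is a subset of $\xi$; deleting $\x^*$ breaks that tree into subtrees rooted at the points of $\eta$, which together with the trees rooted at $\gamma\setminus\{\x^*\}$ form a spanning rooted forest $F'$ on $(\gamma\setminus\{\x^*\}\cup\eta)\cup(\xi\setminus\eta)$ with one tree per point of $\gamma\setminus\{\x^*\}\cup\eta$. Re-attaching $\x^*$ to each point of $\eta$ inverts this operation, so $F\mapsto(\eta,F')$ is a bijection, and the edge-weight product factorises as $\abs{k_\eta(\x^*)}\prod_{F'}$. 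Grouping the forest sum by $\eta$ then rewrites the sum for $Q(\gamma,\xi)$ as $\sum_{\eta\subseteq\xi}\abs{k_\eta(\x^*)}$ times the forest sum for $Q(\gamma\setminus\{\x^*\}\cup\eta,\xi\setminus\eta)$. The prefactors agree: the new pair has $\abs{\gamma}+\abs{\xi}-1$ points, so its prefactor differs from that of $Q(\gamma,\xi)$ by exactly the displayed $e^{2\beta\stabconst_\Phi}$. This yields the recursion, valid in fact for any choice of distinguished point and in particular for $\msb{i}(\gamma)$.

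Uniqueness is then immediate, since the equation is strictly recursive in the total cardinality: on the right-hand side $Q$ is evaluated only at the pair $(\gamma\setminus\{\x^*\}\cup\eta,\xi\setminus\eta)$, whose number of points is $(\abs{\gamma}-1+\abs{\eta})+(\abs{\xi}-\abs{\eta})=\abs{\gamma}+\abs{\xi}-1$. Together with the initial condition $Q(\underline{o},\xi)=\1_{\{\xi=\underline{o}\}}$, the equation therefore determines $Q(\gamma,\xi)$ for all pairs by induction on $\abs{\gamma}+\abs{\xi}$, so the solution is unique. I expect the one genuinely delicate point to be the first step---matching the terms of the factorised definition with spanning rooted forests and checking that the per-tree stability weights telescope to the single global factor $e^{2\beta\stabconst_\Phi(\abs{\gamma}+\abs{\xi})}$, which is precisely what makes the prefactor drop by $e^{2\beta\stabconst_\Phi}$ when one vertex is removed; once this bookkeeping is in place, the decomposition bijection and the induction are routine.
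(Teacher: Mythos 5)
Your argument is correct and is essentially the proof of the cited reference: the paper itself offers no proof (it points to \cite{kks_1998}, Proposition 4.10), and the standard argument there is exactly your rooted-forest reading of the factorised definition, the peel-off bijection $F\mapsto(\eta,F')$ at the distinguished point, and downward induction on $\abs{\gamma}+\abs{\xi}$ for uniqueness. Your one interpretive step --- that the disjoint blocks $\xi_1,\dots,\xi_N$ in the definition of $Q(\gamma,\xi)$ must cover $\xi$, so that the single-point weights telescope to $e^{2\beta\stabconst_\Phi(\abs{\gamma}+\abs{\xi})}$ --- is indeed forced (without it the stated recursion fails already for $\abs{\gamma}=\abs{\xi}=1$), so the proof goes through as written.
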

\begin{Corollary}[\cite{kks_1998}, Proposition 4.11]\label{cor:diff:QandbarK}
	For any $\gamma=\{\x_1,\dots,\x_N\}$, $N\geq 1$, and $\xi\in\confs_f$ such that $\gamma\cap\xi=\underline{o}$, we have
	\begin{equation*}
	\begin{split}
		\abs{\bar k(\gamma,\xi)}&\leq Q(\gamma,\xi)\\
		&= \sum_{\substack{\xi_1,\dots,\xi_N\subset\xi\\\xi_i\cap\xi_j=\underline{o}\,\forall i\neq j}}Q(\{\x_1\},\xi_1)\dots Q(\{\x_N\},\xi_N),
	\end{split}
	\end{equation*}
	and 
	\begin{equation*}
		\abs{k(\gamma)}\leq e^{2\beta\stabconst_\Phi\abs{\gamma}}\sum_{T\in\mathscr{T}(\gamma)}\prod_{\{\x_i,\x_j\}\in T} \abs{e^{-\Phi(\x_i,\x_j)}-1}.
	\end{equation*}
\end{Corollary}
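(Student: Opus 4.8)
The second identity in the statement is simply the definition of $Q(\gamma,\xi)$, so the substance of the first claim is the inequality $\abs{\bar k(\gamma,\xi)}\leq Q(\gamma,\xi)$. The plan is to prove it by induction on the total cardinality $\abs{\gamma}+\abs{\xi}$, exploiting the fact that $\bar k$ and $Q$ obey structurally identical recursions: the non-integrated Kirkwood--Salsburg equation \eqref{eq:diff:ursell} for $\bar k$, and the equation of Proposition \ref{prop:diff:Q} for $Q$. Induction on the \emph{total} count is essential, since the first argument may actually grow when a subset $\eta\subset\xi$ of size $\geq 2$ is transferred; only the sum $\abs{\gamma}+\abs{\xi}$ is guaranteed to decrease. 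The base case is $\gamma=\underline{o}$ (for any $\xi$), where both kernels equal $\1_{\{\xi=\underline{o}\}}$, so equality holds.

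For the inductive step, fix $\gamma$ with $\abs{\gamma}=N\geq 1$. The key point is that the distinguished point $\x\in\gamma$ in \eqref{eq:diff:ursell} is \emph{arbitrary}, so I would choose $\x=\msb{i}(\gamma)$, matching the point used in the defining recursion for $Q$. Taking absolute values in \eqref{eq:diff:ursell} and using the triangle inequality yields
\begin{equation*}
\abs{\bar k(\gamma,\xi)}\leq e^{-\beta\CondEn{\msb{i}(\gamma)}{\gamma\setminus\{\msb{i}(\gamma)\}}}\sum_{\eta\subset\xi}\abs{k_\eta(\msb{i}(\gamma))}\,\abs{\bar k\big((\gamma\setminus\{\msb{i}(\gamma)\})\eta,\xi\setminus\eta\big)}.
\end{equation*}
By the stability property \eqref{eq:diff:I} of the distinguished point, $\CondEn{\msb{i}(\gamma)}{\gamma\setminus\{\msb{i}(\gamma)\}}\geq -2\stabconst_\Phi$, so the prefactor is bounded by $e^{2\beta\stabconst_\Phi}$. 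Each inner pair $\big((\gamma\setminus\{\msb{i}(\gamma)\})\eta,\ \xi\setminus\eta\big)$ has total cardinality $N-1+\abs{\xi}<N+\abs{\xi}$ and consists of disjoint configurations (since $\eta\subset\xi$ and $\gamma\cap\xi=\underline{o}$), so the induction hypothesis applies and bounds $\abs{\bar k}$ by $Q$ there. Substituting both bounds, the right-hand side becomes exactly $e^{2\beta\stabconst_\Phi}\sum_{\eta\subset\xi}\abs{k_\eta(\msb{i}(\gamma))}\,Q\big((\gamma\setminus\{\msb{i}(\gamma)\})\eta,\xi\setminus\eta\big)$, which equals $Q(\gamma,\xi)$ by Proposition \ref{prop:diff:Q}. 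This closes the induction.

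Finally, for the bound on $\abs{k(\gamma)}$ I would invoke Lemma \ref{lem:diff:KbarK}: choosing any $\x\in\gamma$, one has $k(\gamma)=\bar k(\{\x\},\gamma\setminus\{\x\})$, whence $\abs{k(\gamma)}\leq Q(\{\x\},\gamma\setminus\{\x\})$ by the inequality just established. The explicit single-point formula \eqref{eq:diff:Qi}, using $\abs{\gamma\setminus\{\x\}}+1=\abs{\gamma}$ and $\{\x\}\cup(\gamma\setminus\{\x\})=\gamma$, then produces precisely $e^{2\beta\stabconst_\Phi\abs{\gamma}}\sum_{T\in\mathscr{T}(\gamma)}\prod_{\{\x_i,\x_j\}\in T}\abs{e^{-\beta\Phi(\x_i,\x_j)}-1}$ (the degenerate case $\abs{\gamma}=1$ being covered by the convention $Q(\{\x\},\underline{o})=e^{2\beta\stabconst_\Phi}$). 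The main obstacle is getting the two recursions correctly aligned: one must use the freedom in the choice of $\x$ in \eqref{eq:diff:ursell} to force $\x=\msb{i}(\gamma)$, so that the stability estimate \eqref{eq:diff:I} delivers exactly the constant $e^{2\beta\stabconst_\Phi}$ appearing in the $Q$-recursion. Once this matching is in place, the induction on $\abs{\gamma}+\abs{\xi}$ proceeds mechanically.
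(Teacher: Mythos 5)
The paper does not prove this corollary at all -- it simply cites \cite{kks_1998}, Proposition 4.11 -- so there is no internal argument to compare against. Your proof is correct and self-contained: the induction on $\abs{\gamma}+\abs{\xi}$ is well-founded (the recursion \eqref{eq:diff:ursell} strictly decreases the total cardinality), the choice $\x=\msb{i}(\gamma)$ correctly aligns the two recursions so that \eqref{eq:diff:I} yields the prefactor $e^{2\beta\stabconst_\Phi}$ of Proposition \ref{prop:diff:Q}, and the deduction of the bound on $\abs{k(\gamma)}$ from Lemma \ref{lem:diff:KbarK} and \eqref{eq:diff:Qi} is exactly right. The only discrepancy is that the corollary as printed writes $\abs{e^{-\Phi(\x_i,\x_j)}-1}$ without the $\beta$; your version with $e^{-\beta\Phi}$ is the one consistent with the definition of $Q$, so the statement contains a typo rather than your proof containing an error.
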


\begin{Lemma}\label{lem:diff:Qinduction}
	For any finite volume $\Lambda\subset\IR^d$ and $N\geq 1$, for $\lambda$-a.a. $\x\in\states$,
	\begin{equation*}
	\begin{split}
		\int_{(\Lambda\times \marks)^N} &Q(\{\x\},\{\y_1,\dots,\y_N\})\sigmab(d\y_1)\cdots\sigmab(d\y_N)\\
		&\leq e^{2\beta\stabconst_\Phi(N+1)} \RuelleC^{N-1}(N+1)^{N-1} \int_{\Lambda\times \marks}\abs{e^{-\beta\Phi(\x,\y)}-1}\sigmab(d\y).
	\end{split}
	\end{equation*}
\end{Lemma}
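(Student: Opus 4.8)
The plan is to unfold the definition \eqref{eq:diff:Qi} of $Q(\{\x\},\xi)$ as a sum over trees, pull the (finite) sum outside the integral, and then estimate each tree contribution separately by integrating out the marks one vertex at a time. Writing $\xi=\{\y_1,\dots,\y_N\}$ so that $\abs{\xi}=N$, the definition reads
\begin{equation*}
	Q(\{\x\},\xi)=e^{2\beta\stabconst_\Phi(N+1)}\sum_{T\in\mathscr{T}(\{\x\}\cup\xi)}\ \prod_{\{\z,\w\}\in T}\abs{e^{-\beta\Phi(\z,\w)}-1}.
\end{equation*}
Since there are only finitely many trees on the $N+1$ vertices, I would interchange this sum with the integration over $(\Lambda\times C_0)^N$; it then suffices to bound, for each fixed $T$, the quantity
\begin{equation*}
	I_T\defeq\int_{(\Lambda\times C_0)^N}\ \prod_{\{\z,\w\}\in T}\abs{e^{-\beta\Phi(\z,\w)}-1}\ \sigma(d\y_1)\cdots\sigma(d\y_N).
\end{equation*}

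Next I would fix such a tree $T$, view it as rooted at $\x$, and integrate out the mark variables $\y_1,\dots,\y_N$ one at a time, at each step choosing a vertex that is currently a leaf and distinct from the root. A leaf $\y$ is joined to the rest of $T$ by a single edge, to its parent $\z$, so integrating the product over $\y$ factors out exactly one term $\int_{\Lambda\times C_0}\abs{e^{-\beta\Phi(\z,\y)}-1}\,\sigma(d\y)$ and leaves a product over a tree with one fewer vertex. By Assumption \ref{hyp:diff:2} this factor is at most $\RuelleC$, \emph{uniformly in the parent $\z$} — which is essential, since $\z$ is in general still one of the variables not yet integrated. Iterating over all $N$ edges, each contributes a factor at most $\RuelleC$; moreover, whenever the parent is the root $\x$ itself the factor equals exactly $\int_{\Lambda\times C_0}\abs{e^{-\beta\Phi(\x,\y)}-1}\,\sigma(d\y)$. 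Since $T$ is connected on $N+1\geq2$ vertices, $\x$ has at least one incident edge; keeping one such root-edge factor and bounding the remaining $N-1$ edges by $\RuelleC$, I obtain
\begin{equation*}
	I_T\leq \RuelleC^{\,N-1}\int_{\Lambda\times C_0}\abs{e^{-\beta\Phi(\x,\y)}-1}\,\sigma(d\y).
\end{equation*}

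Finally, summing over the $(N+1)^{N-1}$ labelled trees on $N+1$ vertices (Cayley's formula) and reinserting the prefactor $e^{2\beta\stabconst_\Phi(N+1)}$ produces exactly the asserted bound. The only real obstacle is the second step: one must argue that the leaf-by-leaf integration is legitimate and that every intermediate factor may be replaced by the constant $\RuelleC$ regardless of the values of the variables still to be integrated. This is precisely what the uniform-in-$\x$ regularity of Assumption \ref{hyp:diff:2} supplies, and it is what licenses integrating from the leaves toward the root via Fubini, independently of the combinatorial shape of $T$.
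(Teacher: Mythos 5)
Your proof is correct and follows essentially the same route as the paper's: both expand $Q(\{\x\},\cdot)$ as a sum over the $(N+1)^{N-1}$ trees and then peel off leaves distinct from the root one at a time, bounding each removed edge by the uniform constant $\RuelleC$ while retaining one exact root-edge factor $\int_{\Lambda\times C_0}\abs{e^{-\beta\Phi(\x,\y)}-1}\sigma(d\y)$. The only difference is presentational: the paper packages the leaf-by-leaf integration as an induction on $N$, whereas you run it as a direct iteration.
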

\begin{proof}
	Using \eqref{eq:diff:Qi}, we rewrite the l.h.s. as
	\begin{equation*}
		e^{2\beta\stabconst_\Phi(N+1)}\sum_{T\in\mathscr{T}([N+1])}\underbrace{\int_{(\Lambda\times \marks)^N} \prod_{\{i,j\}\in T}\abs{e^{-\beta\Phi(\y_i,\y_j)}-1}\,	\sigmab(d\y_1)\cdots\sigmab(d\y_N)}_{\eqdef\, I_N},
	\end{equation*}
	where we set $\y_{N+1} \defeq \x$, and $[N+1] \defeq \{1,\dots,N+1\}$. We estimate $I_N$ by induction on $N\geq 1$:
	\begin{itemize}
		\item For $N = 1$,
		\begin{equation*}
 			I_1 = \int_{\Lambda\times \marks} \abs{e^{-\beta\Phi(\x,\y_1)}-1}\sigmab(d\y_1).
 		\end{equation*}
		\item For the inductive step, assume that, for all $T\in \mathscr{T}([N])$,
		\begin{equation*}
			\int_{(\Lambda\times \marks)^{N-1}} \prod_{\{i,j\}\in T}\abs{e^{-\beta\Phi(\y_i,\y_j)}-1}\,	\bigotimes_{i=1}^{N-1} \sigmab(d\y_i) \leq \RuelleC^{N-2} \int_{\Lambda\times \marks} \abs{e^{-\beta\Phi(\y_{N},\y)}-1}\sigmab(d\y).
		\end{equation*}
		\item Let $T\in\mathscr{T}([N+1])$ be given, and root it in $\y_{N+1}$. There exists then an edge $\{j_1,j_2\}\in T$, where $\y_{j_1}$ is a leaf, and $\y_{j_1}\neq \y_{j_{N+1}}$. We obtain
		\begin{equation*}
		\begin{split}
			&\int_{(\Lambda\times \marks)^{N}} \prod_{\{i,j\}\in T}\abs{e^{-\beta\Phi(\y_i,\y_j)}-1}\,	 \bigotimes_{i=1}^N \sigmab(d\y_i)\\
			& = \int_{(\Lambda\times \marks)^{N-1}}\underbrace{\int_{\Lambda\times \marks} \abs{e^{-\beta\Phi(\y_{j_1},\y_{j_2})}-1}\, \sigmab(d\y_{j_1})}_{ \leq\, \RuelleC}\prod_{\{i,j\}\in T\setminus\{\{j_1,j_2\}\}} \abs{e^{-\beta\Phi(\y_i,\y_j)}-1}\bigotimes_{\substack{i=1\\i\neq j_1}}^N \sigmab(d\y_i)\\
			&\leq \RuelleC \int_{(\Lambda\times \marks)^{N-1}} \prod_{\{i,j\}\in T\setminus\{\{j_1,j_2\}\}} \abs{e^{-\beta\Phi(\y_i,\y_j)}-1}\bigotimes_{\substack{i=1\\i\neq j_1}}^N\sigmab(d\y_i).
		\end{split}
		\end{equation*}
		We can then use the inductive step to prove the assertion.
	\end{itemize}
	Moreover,
	\begin{equation*}
		e^{2\beta\stabconst_\Phi(N+1)}\sum_{T\in\mathscr{T}([N+1])} I_N \leq e^{2\beta\stabconst_\Phi(N+1)}\sum_{T\in\mathscr{T}([N+1])} \RuelleC^{N-1} \int_{\Lambda\times \marks} \abs{e^{-\beta\Phi(\y_{N+1},\y)}-1}\sigmab(d\y),
	\end{equation*}
	and the claim follows, since the number of elements of $\mathscr{T}([N+1])$ is $(N+1)^{N-1}$ (see Theorem 4.1.3 of \cite{ore_1995}).
\end{proof}

\begin{Lemma}\label{lem:diff:RB1}
	Define the threshold activity
	\begin{equation}\label{eq:diff:zRuelle}
		\zRuelle \defeq (\RuelleC e^{2\beta\stabconst_\Phi+1})^{-1},	
	\end{equation}
	and let $z<\zRuelle$. For any finite volume $\Lambda\subset\IR^d$, for $\tilde\pi^{z\sigmab}$-a.a. $\gamma\in\confs_f$, if $\abs{\gamma}=N\geq 1$,
	\begin{equation*}
		\int_{\confs_\Lambda} \abs{\bar k(\gamma,\xi)}\tilde\pi_\Lambda^{z\sigmab}(d\xi) \leq \constanz^{N},
	\end{equation*}
	where
	\begin{equation}\label{eq:diff:cz}
		\constanz \defeq e^{2\beta\stabconst_\Phi}\left(1 + \frac{e}{\sqrt{2\pi}} \log\left(\frac{1}{1 - z/\zRuelle}\right)\right)<+\infty.
	\end{equation}
	Moreover, for any $z<\zRuelle$,
	\begin{equation}\label{eq:diff:finitek}
		\int_{\confs_\Lambda} \abs{k(\xi)}\tilde\pi_\Lambda^{z\sigmab}(d\xi)<+\infty.
	\end{equation}
\end{Lemma}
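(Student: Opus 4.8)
The plan is to dominate the Ursell kernel by the functional $Q$ and then exploit the multiplicative structure of $Q$ in its second argument to reduce the integral over $\confs_\Lambda$ to a product of single-point integrals. Concretely, for $\gamma=\{\x_1,\dots,\x_N\}$ Corollary \ref{cor:diff:QandbarK} gives $\abs{\bar k(\gamma,\xi)}\leq Q(\gamma,\xi)$, and by \eqref{eq:diff:Qi} the functional $Q(\gamma,\cdot)$ is the $*$-product $Q(\{\x_1\},\cdot)*\cdots*Q(\{\x_N\},\cdot)$, the sum running over the ordered partitions of $\xi$ into the subconfigurations $\xi_1,\dots,\xi_N$. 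It therefore suffices to integrate each single-point factor against $\tilde\pi^{z\sigma}_\Lambda$ and multiply.

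First I would establish the factorization of the integral. Writing $\tilde\pi_\Lambda^{z\sigma}=\sum_{M\geq 0}\tfrac{z^M}{M!}\sigma^{\otimes M}$ and distributing the $M$ integration variables among the $N$ groups $\xi_1,\dots,\xi_N$ (a multinomial count that exactly cancels the weight $1/M!$), one obtains the product formula
\begin{equation*}
\int_{\confs_\Lambda}Q(\gamma,\xi)\,\tilde\pi_\Lambda^{z\sigma}(d\xi)=\prod_{i=1}^N\int_{\confs_\Lambda}Q(\{\x_i\},\xi)\,\tilde\pi_\Lambda^{z\sigma}(d\xi).
\end{equation*}
This is the standard identity $\int(F*G)\,d\tilde\pi^{z\sigma}=\big(\int F\,d\tilde\pi^{z\sigma}\big)\big(\int G\,d\tilde\pi^{z\sigma}\big)$, iterated $N-1$ times.

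Next I would bound the single-point integral uniformly in $\x$. Splitting off the $M=0$ term $Q(\{\x\},\underline{o})=e^{2\beta\stabconst_\Phi}$ from \eqref{eq:diff:Qi}, and applying Lemma \ref{lem:diff:Qinduction} together with $\int_{\Lambda\times C_0}\abs{e^{-\beta\Phi(\x,\y)}-1}\sigma(d\y)\leq\RuelleC$ (Assumption \ref{hyp:diff:2}) to the terms $M=n\geq 1$, gives
\begin{equation*}
\int_{\confs_\Lambda}Q(\{\x\},\xi)\,\tilde\pi_\Lambda^{z\sigma}(d\xi)\leq e^{2\beta\stabconst_\Phi}\Big(1+\sum_{n\geq 1}\frac{w^n}{n!}(n+1)^{n-1}\Big),\qquad w\defeq z\,e^{2\beta\stabconst_\Phi}\RuelleC=\frac{z}{e\,\zRuelle}.
\end{equation*}
The key computation is then to turn this series into the closed form \eqref{eq:diff:cz}. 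Using $n!\geq\sqrt{2\pi n}\,(n/e)^n$ and $(n+1)^{n-1}/n^n=(1+1/n)^n/(n+1)\leq e/(n+1)$, each summand is at most $\tfrac{e}{\sqrt{2\pi}}\tfrac{(ew)^n}{\sqrt n\,(n+1)}\leq\tfrac{e}{\sqrt{2\pi}}\tfrac{(ew)^n}{n}$; summing the logarithmic series, which converges precisely because $ew=z/\zRuelle<1$, produces $\tfrac{e}{\sqrt{2\pi}}\log\tfrac{1}{1-z/\zRuelle}$, i.e. exactly $\constanz$. As this bound is independent of $\x$, the product formula yields $\int_{\confs_\Lambda}\abs{\bar k(\gamma,\xi)}\,\tilde\pi_\Lambda^{z\sigma}(d\xi)\leq\constanz^N$, uniformly in $\Lambda$.

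Finally, for \eqref{eq:diff:finitek} I would use Lemma \ref{lem:diff:KbarK}, namely $k(\xi)=\bar k(\{\x\},\xi\setminus\{\x\})$ for any $\x\in\xi$, to write the $M$-point contribution of $\int\abs{k}\,d\tilde\pi_\Lambda^{z\sigma}$ as $\tfrac{z^M}{M!}\int_{\Lambda\times C_0}\big(\int_{(\Lambda\times C_0)^{M-1}}\abs{\bar k(\{\x\},\cdot)}\,\sigma^{\otimes(M-1)}\big)\,\sigma(d\x)$; bounding the inner integral by Lemma \ref{lem:diff:Qinduction} uniformly in $\x$ and using that $\sigma(\Lambda\times C_0)<+\infty$ (finite by \eqref{eq:diff:Psi} and \eqref{eq:diff:delta}, since $\Lambda$ is bounded) reduces matters to the convergence of a tree-type series $\sum_{M}\tfrac{w^M}{M!}M^{M-2}$, whose radius of convergence is $1/e$; since $w<1/e$ for $z<\zRuelle$, the series converges and the integral is finite. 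The main obstacle is the third step: extracting the sharp, $\x$-independent constant $\constanz$ in closed form, which requires arranging the Stirling estimate so that Cayley's count $(n+1)^{n-1}$ collapses against $1/n!$ into a single logarithmic term — and it is exactly this uniformity in the starting point, and hence in the volume, that makes the bound usable for the uniqueness argument.
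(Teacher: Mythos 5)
Your argument is correct and follows the paper's proof essentially step for step: domination of $\bar k$ by $Q$, factorization of the integral into single-point factors, Lemma \ref{lem:diff:Qinduction} combined with Cayley's count $(n+1)^{n-1}$ and a Stirling estimate to collapse the series into the logarithm defining $\constanz$, and a tree-graph bound for \eqref{eq:diff:finitek}. The only difference is cosmetic: you spell out in full the final finiteness claim, which the paper dispatches with a one-line reference to Corollary \ref{cor:diff:QandbarK} and the proof of Lemma \ref{lem:diff:Qinduction}.
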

\begin{proof}
	Let $\gamma=\{\x_1,\dots,\x_N\}$. From Corollary \ref{cor:diff:QandbarK},
	\begin{equation*}
		\int_{\confs_\Lambda} \abs{\bar k(\gamma,\xi)}\tilde\pi_\Lambda^{z\sigmab}(d\xi)\leq \prod_{i=1}^N \int_{\confs_\Lambda} Q(\{\x_i\},\xi)\tilde\pi_\Lambda^{z\sigmab}(d\xi).
	\end{equation*}
	Thanks to Lemma \ref{lem:diff:Qinduction},
	\begin{equation*}
	\begin{split}
		\int_{\confs_\Lambda}& Q(\{\x_i\},\xi)\tilde\pi_\Lambda^{z\sigmab}(d\xi)= \sum_{N=0}^{+\infty} \frac{z^N}{N!} \int_{(\Lambda\times \marks)^N} Q(\{\x_i\},\{\y_1,\dots,\y_N\})\,\sigmab(d\y_1)\cdots\sigmab(d\y_N) \\
		&= e^{2\beta\stabconst_\Phi} + \sum_{N=1}^{+\infty}\frac{z^N}{N!}e^{2\beta\stabconst_\Phi(N+1)} \RuelleC^{N-1}(N+1)^{N-1} \underbrace{\int_{\Lambda\times \marks}\abs{e^{-\beta\Phi(\x,\y)}-1}\sigmab(d\y)}_{\leq \RuelleC}\\
		&\leq e^{2\beta\stabconst_\Phi}\left(1 + \frac{e}{\sqrt{2\pi}}\sum_{N=1}^{+\infty} \frac{(z\, \RuelleC e^{2\beta\stabconst_\Phi+1})^N}{N^{3/2}}\right)\leq e^{2\beta\stabconst_\Phi}\left(1 + \frac{e}{\sqrt{2\pi}}\sum_{N=1}^{+\infty} \frac{(z\, \RuelleC e^{2\beta\stabconst_\Phi+1})^N}{N}\right),
	\end{split}
	\end{equation*}
	where, in the third step, we used the inequality $(N+1)^{N-1}\leq \frac{1}{\sqrt{2\pi}} e^{N+1}\frac{N!}{(N+1)^{3/2}}$, which is a consequence of Stirling's formula: for any $n\geq 0$, 
	\begin{equation*}
		\sqrt{2\pi}n^{n+1/2}e^{-n}e^{1/(12n+1)}\leq n! \Rightarrow n^{n-2}\leq \frac{1}{\sqrt{2\pi}}e^n \frac{(n-1)!}{n^{3/2}}.
	\end{equation*}
	For $z<(\RuelleC e^{2\beta\stabconst_\Phi+1})^{-1}\eqdef\zRuelle$, the above series converges, and we obtain
	\begin{equation*}
		\int_{\confs_\Lambda} Q(\{\x_i\},\xi)\tilde\pi_\Lambda^{z\sigmab}(d\xi)\leq  e^{2\beta\stabconst_\Phi}\left(1 + \frac{e}{\sqrt{2\pi}} \log\left(\frac{1}{1 - z/\zRuelle}\right)\right)\eqdef \constanz.	
	\end{equation*}
	By using Corollary \ref{cor:diff:QandbarK}, and proceeding similarly to the proof of Lemma \ref{lem:diff:Qinduction}, we obtain that, for $z<\zRuelle$,
	\begin{equation*}
		\int_{\confs_\Lambda}\abs{k(\xi)}\tilde\pi_\Lambda^{z\sigmab}(d\xi) <+\infty.
	\end{equation*}
\end{proof}

\begin{Remark}
	Note that $\constanz$ depends on $z$ but is uniform in $\Lambda$; moreover, $\constan_0=e^{2\beta\stabconst_\Phi}$.	
	We also note how the estimates of the above proof, while not optimal, are essential to the method. This leads to an explicit but potentially not optimal Ruelle bound.
\end{Remark}

\subsection{A Ruelle bound for correlation functions}\label{sec:diff:RB}

As a consequence of \eqref{eq:diff:finitek}, we can use the representation \eqref{eq:diff:corrBark} of the correlation function
\begin{equation*}
	\corr{\Lambda}{z,\beta}(\gamma) = \int_{\confs_\Lambda} \bar k(\gamma,\xi)\tilde\pi_\Lambda^{z\sigmab}(d\xi),
\end{equation*}	
and use the above tree-graph estimates to obtain the following Ruelle bound:
\begin{Proposition}\label{prop:diff:RB}
	Let $\beta>0$ and $\zRuelle$ as defined in \eqref{eq:diff:zRuelle}. For a pair potential $\Phi$ satisfying Assumptions \ref{hyp:diff:0c}+\ref{hyp:diff:2}, for any activity $z\in(0,\zRuelle)$ and any finite volume $\Lambda\subset\IR^d$, the finite-volume correlation function $\corr{\Lambda}{z,\beta}$ satisfies, for $\tilde\pi^{z\sigmab}$-a.a. $\gamma\in\confs_\Lambda$,
	\begin{equation}\label{eq:diff:RB0}
		\corr{\Lambda}{z,\beta}(\gamma) \leq \constanz^{\abs{\gamma}},
	\end{equation}
	where the constant $\constanz$ is defined in \eqref{eq:diff:cz}.	Moreover, a similar bound holds for the $N$-point correlation functions of any $P\in\Gibbs$: for any $z\in(0,\zRuelle)$, for any $N\geq 1$, for $\sigmab^{\otimes N}$-almost all $\{\x_1,\dots,\x_N\}\subset\states^N$
	\begin{equation}\label{eq:diff:RB}
		\corr{N}{P}(\x_1,\dots,\x_N)\leq \constanz^N.
	\end{equation}
\end{Proposition}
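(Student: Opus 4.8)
The plan is to treat the two bounds separately: the finite-volume bound \eqref{eq:diff:RB0} is essentially immediate, while the infinite-volume bound \eqref{eq:diff:RB} requires transferring it to an arbitrary Gibbs measure through the DLR equations.

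For \eqref{eq:diff:RB0}, the point is that for $z\in(0,\zRuelle)$ the integrability \eqref{eq:diff:finitek} of the Ursell function holds by Lemma \ref{lem:diff:RB1}, so the representation \eqref{eq:diff:corrBark} of Lemma \ref{lem:diff:barKrho} is available, namely $\corr{\Lambda}{z,\beta}(\gamma)=\int_{\confs_\Lambda}\bar{k}(\gamma,\xi)\,\tilde\pi^{z\sigma}_\Lambda(d\xi)$. Bounding the integrand by its absolute value and invoking the uniform estimate $\int_{\confs_\Lambda}|\bar{k}(\gamma,\xi)|\,\tilde\pi^{z\sigma}_\Lambda(d\xi)\leq\constanz^{|\gamma|}$ from Lemma \ref{lem:diff:RB1} yields \eqref{eq:diff:RB0} at once; the essential feature, already built into the definition of $\constanz$, is that this bound is uniform in $\Lambda$.

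To reach \eqref{eq:diff:RB} for an arbitrary $P\in\Gibbs$, I would localise by means of the DLR equation \eqref{eq:diff:DLR}. Fixing $\x_1,\dots,\x_N$ with starting points in a finite volume $\Lambda$, combining the Nguyen--Zessin representation \eqref{eq:diff:corrGibbs} with \eqref{eq:diff:DLR} expresses $\corr{N}{P}(\x_1,\dots,\x_N)$ as the $P$-expectation over $\xi$ of finite-volume correlation functions carrying the boundary condition $\xi_{\Lambda^c}$, i.e.\ of quantities of the type $\corr{\Lambda}{z,\beta}(\cdot\,|\,\xi_{\Lambda^c})$. This identity holds exactly for every such $\Lambda$, so it suffices to control these boundary-conditioned correlation functions by $\constanz^N$, average, and then let $\Lambda\uparrow\IR^d$.

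The main obstacle is precisely this uniform control over boundary conditions, with the \emph{same} constant $\constanz$ rather than a boundary-worsened one. The natural route is to re-run the Ursell-kernel and tree-graph analysis of Lemmas \ref{lem:diff:Qinduction}--\ref{lem:diff:RB1} with the boundary absorbed: the pairwise kernel $\bar{k}$ is unchanged, while the interaction with $\xi_{\Lambda^c}$ enters only through a modified single-site weight $e^{-\beta\CondEn{\cdot}{\xi_{\Lambda^c}}}$, producing both an external prefactor $e^{-\beta\CondEn{\gamma}{\xi_{\Lambda^c}}}$ and a boundary-dependent regularity constant in place of $\RuelleC$. Since $P$ is supported on tempered configurations $\confs^\tempered$, for fixed $\gamma$ these boundary corrections are controlled and tend to $1$ as the boundary recedes: using the lower bound \eqref{eq:diff:bounded}, the regularity of Assumption \ref{hyp:diff:2}, and the integrability \eqref{eq:diff:delta}, I would exhibit a $P$-integrable dominating function, so that dominated convergence in $\xi$ together with $\Lambda\uparrow\IR^d$ collapses the boundary contributions and leaves the clean bound $\corr{N}{P}(\x_1,\dots,\x_N)\leq\constanz^N$. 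Verifying that the tree-graph estimate survives the boundary with this same constant, uniformly over tempered configurations, is the crux of the argument.
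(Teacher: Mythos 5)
Your treatment of the finite-volume bound \eqref{eq:diff:RB0} is exactly the paper's: Lemma \ref{lem:diff:RB1} gives \eqref{eq:diff:finitek}, hence the representation \eqref{eq:diff:corrBark} applies, and the uniform-in-$\Lambda$ estimate $\int_{\confs_\Lambda}\abs{\bar k(\gamma,\xi)}\tilde\pi^{z\sigma}_\Lambda(d\xi)\leq\constanz^{\abs{\gamma}}$ does the rest. No issue there.

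For the second bound \eqref{eq:diff:RB} your route diverges from the paper's, and it has a genuine gap: you correctly identify the crux --- re-running the tree-graph estimate with the boundary condition $\xi_{\Lambda^c}$ absorbed, with the \emph{same} constant $\constanz$, uniformly over tempered boundary configurations --- but you do not carry it out, and under the hypotheses actually in force here it is doubtful that it can be carried out as described. In Section \ref{sec:diff:RBcorrelation} only Assumptions \ref{hyp:diff:0} and \ref{hyp:diff:2} are assumed; there is no finite-range hypothesis (Assumption \ref{hyp:diff:1} is not in play), so the prefactor $e^{-\beta\CondEn{\gamma}{\xi_{\Lambda^c}}}$ is only controlled from below pointwise via $\Phi\geq-2\stabconst_\Phi$, which yields a bound growing with the number of boundary points interacting with $\gamma$ --- not a bound uniform over $\confs^\tempered$, and not one that tends to $1$ as $\Lambda\uparrow\IR^d$ without further decay input. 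Consequently the boundary-conditioned finite-volume correlation functions need not satisfy $\constanz^N$ for any finite $\Lambda$, and your dominated-convergence step lacks an exhibited dominating function. The paper sidesteps all of this: it never introduces boundary-conditioned expansions, but instead observes that the bound \eqref{eq:diff:RB0} passes to the infinite-volume \emph{free-boundary} cluster series $\corr{f}{z}(\gamma)=\int_{\confs_f}\bar k(\gamma,\xi)\tilde\pi^{z\sigma}(d\xi)$, and then invokes Lemmas 12 and 15 of \cite{poghosyan_zessin_2020} to identify $\corr{f}{z}$ with $\corr{N}{P}$ for \emph{every} $P\in\Gibbs$, valid as soon as the representation \eqref{eq:diff:corrGibbs} is well defined (which is assumed a priori in this section). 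That identification is the missing ingredient in your argument; either import it as the paper does, or you must actually prove the uniform boundary-conditioned estimate you defer, which would require additional assumptions beyond \ref{hyp:diff:0} and \ref{hyp:diff:2}.
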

\begin{proof}
	Fix $z<\zRuelle$. The first statement is an immediate consequence of Lemma 	\ref{lem:diff:RB1}.
	Moreover, as the right hand side of \eqref{eq:diff:RB0} does not depend on $\Lambda$, this bound also holds in the limit as $\Lambda\uparrow \IR^d$, so for the limiting correlation function $\corr{f}{z}(\gamma) \defeq \int_{\confs_f} \bar k(\gamma,\xi)\tilde\pi^{z\sigmab}(d\xi)$, $\gamma\in\confs_f$.
	
	For the second statement, consider $\gamma = \{\x_1,\dots,\x_N\}$. It is known (see \cite{poghosyan_zessin_2020}, Lemmas 11 and 14), that the limiting correlation functional $\corr{f}{z}(\gamma)$ coincides with the correlation function $\corr{N}{P}(\gamma)$ whenever the expression in \eqref{eq:diff:corrGibbs} is well defined. As this is indeed the case in our setting,
	%for point processes supported on the tempered configurations (cf. Remark after Definition \ref{prop:Ncorr}), 
	the Ruelle bound \eqref{eq:diff:RB} holds for any $P\in\Gibbs$.
\end{proof}

\section{Uniqueness via the Kirkwood--Salsburg equations}\label{sec:diff:KS}

We are in the following situation: we have an infinite-volume Gibbs point process $P\in\Gibbs$ associated to a potential $\Phi$ (not necessarily constructed as in Section \ref{sec:diff:existence}) and whose correlation functions satisfy a Ruelle bound (not necessarily that of Section \ref{sec:diff:RBcorrelation}), and wish to understand whether it is indeed the unique such process associated to $\Phi$ and with activity $z$.

%In this section we assume that, additionally to Assumption \ref{hyp:diff:0}, the correlation functions of any $P\in\Gibbs$ can be represented as in \eqref{eq:diff:corrGibbs}:
%\begin{equation*}
%	\corr{N}{P}(\x_1,\dots,\x_N) = e^{-\beta \PairEnergy(\x_1,\dots,\x_N)} \int_\confs e^{-\beta \CondEn{\x_1,\dots,\x_N}{\xi}}P(d\xi).
%\end{equation*}

The uniqueness proof is structured as follows: we prove that the correlation functions of a Gibbs point process satisfy the Kirkwood--Salsburg equations. Moreover, thanks to the Ruelle bounds, these correlation functions belong to an appropriate Banach space, where these equations have at most one solution. From this, we obtain the uniqueness of the Gibbs point process $P$.

In this section we work under Assumptions \ref{hyp:diff:0b}+\ref{hyp:diff:4}+\ref{hyp:diff:RBnonunif}, introduced in Subsection \ref{sec:diff:generalRB} below.

\subsection{The Kirkwood--Salsburg equations}

The key goal of this section is to show that the correlation functions $(\corr{N}{P})_N$ of any $P\in\Gibbs$ solve, for all $N\geq 1$, for $\sigmab^{\otimes (N+1)}$-almost all $(\x_0,\dots,\x_N)\in\states^{N+1}$, the sequence of \emph{Kirkwood--Salsburg equations}
\begin{equation}\label{eq:diff:KS}\tag*{(KS)$_z$}
\begin{split}
	&\corr{N+1}{P}(\x_0,\dots,\x_N) = e^{-\beta\CondEn{\x_0}{\x_1,\dots,\x_N}} \big( \corr{N}{P}(\x_1,\dots\x_N) \\
	&\quad+ \sum_{k=1}^{+\infty}\frac{z^k}{k!}\int \prod_{j=1}^k(e^{-\beta\Phi(\x_0,\y_j)}-1)\corr{N+k}{P}(\x_1,\dots,\x_N,\y_1,\dots,\y_k)\sigmab^{\otimes k}(d\y_1,\dots,d\y_k)\big),
\end{split}
\end{equation}	
where, by convention, $\corr{0}{P}=1$.

We can interpret the Kirkwood--Salsburg equations as an operator acting on the Banach space $\Banach$, defined below.
%Note that the different nature of the Ruelle bounds of Section \ref{sec:diff:existence} and \ref{sec:diff:RBcorrelation} -- the former allows for a dependence on the marks of the $N$ points that the latter does not -- requires two different approaches. We first treat, in Section \ref{sec:diff:RBunif}, the simpler case where the Ruelle bound holds for a constant $\constan>0$; in Section \ref{sec:diff:RBnonunif} we consider the situation in which the Ruelle bound holds for a positive function $\constan\colon\states\to\IR_+$. Accordingly, the Banach space in which we prove uniqueness is defined as follows:
\begin{Definition}
	The Banach space $\Banach$ is the set of all sequences $r=(r_N)_N$ such that
	\begin{equation*}
		\exists\, \tg{b}_r\geq 0: \forall N\geq 1,\ \abs{r_N(\x_1,\dots,\x_N)}\leq \tg{b}_r \prod_{i=1}^N\constan(\x_i),
	\end{equation*}
	endowed with the norm $\norm{r}_\constan$ equal to the smallest such $\tg{b}_r$.
\end{Definition}
Note that, in the case of $\constan>0$ constant, the right hand side reads $\tg{b}_r \constan^N$.

\begin{Definition}
	Consider the \emph{Kirkwood--Salsburg operator} $\K_z$, $z>0$, acting on $\Banach$, given by
	\begin{align}\label{eq:diff:KSoperator}
		&(\K_z r)_1(\x_0) = \sum_{k=1}^{+\infty}\frac{z^k}{k!}\int \prod_{j=1}^k(e^{-\beta\Phi(\x_0,\y_j)}-1)r_{N+k}(\x_1,\dots,\x_N,\y_1,\dots,\y_k)\ \sigmab^{\otimes k}(d\y_1,\dots,d\y_k)\big);\nonumber \\
		&(\K_z r)_{N+1}(\x_0,\dots,\x_N) = e^{-\beta\sum_{i=1}^N\Phi(\x_0,\x_i)} \big( r_N(\x_1,\dots\x_N) \\
		&+ \sum_{k=1}^{+\infty}\frac{z^k}{k!}\int \prod_{j=1}^k(e^{-\beta\Phi(\x_0,\y_j)}-1)r_{N+k}(\x_1,\dots,\x_N,\y_1,\dots,\y_k)\ \sigmab^{\otimes k}(d\y_1,\dots,d\y_k)\big),\ N\geq 1 \nonumber.
	\end{align}
\end{Definition}
The Kirkwood--Salsburg equations \ref{eq:diff:KS} can now be rewritten as the following fixed-point problem in the Banach space $\Banach$:
	\begin{equation}\label{eq:diff:fixed}
		r = \K_z r + \underline{1}_z,	
	\end{equation}
	where $\underline{1}_z = (\underline{1}_{z,N})_N$ is given by $\underline{1}_{z,1}(\x_1) = 1$, $\underline{1}_{z,N} = 0$ for $N\geq 2$. 

%Note that the Gibbs point process $P^z$ constructed in Theorem \ref{thm:diff:existence} is supported on $\confs^\temp$. For this reason, when looking at whether such a process is unique, we will restrict ourselves to such a class.

\subsection{The Kirkwood--Salsburg operator}\label{sec:diff:RBnonunif}

In this subsection we show that, under the assumptions presented below -- in particular when we already know that the correlation functions of a Gibbs point process satisfy a Ruelle bound -- there exists an activity threshold $\zCrit>0$ such that, for any $z<\zCrit$, the Kirkwood--Salsburg operator $\K_z$ is a contraction.

\subsubsection{The general case}\label{sec:diff:generalRB}

We allow for a weakening of Assumptions \ref{hyp:diff:0} and \ref{hyp:diff:2}. See Subsection \ref{sec:diff:RBunif} for a comparison, when assuming that a \emph{uniform} Ruelle bound holds.

\setcounter{Assumption}{1}
\begin{AssumptionOpt}{$'$}[Weak stability]\label{hyp:diff:0b}
	Consider an energy functional $\Energy$ as in \eqref{eq:diff:energy}, where the self-potential $\Psi$ satisfies \eqref{eq:diff:Psi}, but for which the stability condition \hyp{t.st.} of the pair potential $\Phi$ is replaced by a weaker one:
\begin{description}[font = \normalfont,align=left]
	\item[\hyp{w.st.}] The pair potential between two \comment{marked points} is given by a symmetric functional $\Phi:\states\times\states\rightarrow\IRi$ such that there exists a function $\msb{b}\colon\states\to\IR_+$ where, for any $\{\x_0,\dots,\x_N\}\subset\states$ and some $\x\in \{\x_0,\dots,\x_N\}$ (w.l.o.g. $\x_0$), the following holds
	\begin{equation}\label{eq:diff:Pstab}
		\sum_{i=1}^N\Phi(\x_0,\x_i)\geq -\msb{b}(\x_0).
	\end{equation}
\end{description}	
\end{AssumptionOpt}
 
\begin{Remark}\label{rmk:diff:stab}
	\comment{Note that \hyp{w.st.} is actually implied by the conditional stability condition \eqref{eq:diff:condStab}.}
	%, if the latter is extended to be a condition on the full configuration space $\mathcal{M}$ and not only on tempered configurations. 
	Moreover, if $\Phi$ satisfies \hyp{t.st.} [resp. \hyp{st.}], then \hyp{w.st.} holds for $\msb{b}(x,m)=2\stabconst_\Phi(1+\norm{m}^{d+\delta})$ [resp. $2\stabconst_\Phi$, see \eqref{eq:diff:I}]. In other words, we have the following implications: \hyp{st.} $\Rightarrow$ \hyp{t.st.} $\Rightarrow$ \hyp{w.st.} $\Leftarrow$ \hyp{t.cond.st.}.

	Conversely, if $\Phi$ satisfies \hyp{w.st.} for $\msb{b}(x,m) = \stabconst_\Phi(1+\norm{m}^{d+\delta})$ [resp. $\msb{b}\equiv \stabconst_\Phi$], then is also satisfies \hyp{t.st.} [resp. \hyp{st.}] for the same function [resp. constant].
	%\comment{Under this weaker stability condition, one cannot use the entropy method of Section \ref{sec:diff:existence} for the existence result. With stronger assumptions one could, for example, use cluster expansion itself (see \cite{ruelle_1969,poghosyan_zessin_2020}).}
\end{Remark}

While in the previous section we assumed that the pair potential $\Phi$ satisfied a \emph{uniform} regularity condition (Assumption \ref{hyp:diff:2}), here we work with potentials $\Phi$ that satisfy the following \emph{weighted} regularity condition (cf. \cite{poghosyan_ueltschi_2009}):
\setcounter{Assumption}{3}
\begin{AssumptionOpt}{$'$}[Weighted regularity]\label{hyp:diff:4}
	There exist a positive function $\msb{a}\colon\states\to [a,+\infty)$ and 
	a critical activity $\zCrit>0$ such that, for any $\x\in\states$,
	\begin{equation}\label{eq:diff:reg}
		\zCrit\int e^{\msb{a}(\y)+ \msb{b}(\y)}\abs{ e^{-\beta\Phi(\x,\y)}-1} \ \sigmab(d\y) \leq \msb{a}(\x),
	\end{equation}
	with $\msb{a}+\msb{b}$ locally integrable in the following sense: for any bounded $A\subset \states$,
	\begin{equation}\label{eq:diff:abinteg}
		\int_{A} e^{\msb{a}(\x)+\msb{b}(\x)}	\sigmab(d\x) < +\infty.
	\end{equation}
\end{AssumptionOpt}
\begin{Remark}
	Under Assumptions \ref{hyp:diff:0c}+\ref{hyp:diff:2}, one can choose $\msb{a}\equiv 1$ and $\msb{b}\equiv 2\beta \stabconst_\Phi$, yielding $\zCrit = (e\RuelleC e^{2\beta \stabconst_\Phi})^{-1}$. Note that a Ruelle bound does not automatically follow.
	
	The local integrability assumption \eqref{eq:diff:abinteg} is needed for the method of moments in step \emph{(iv)} of the proof of Theorem \ref{thm:diff:uniqueness}.
\end{Remark}

Let $\msb{a}$ and $\msb{b}$ as above. We also assume that the correlation functions satisfy a Ruelle bound of the following form:

\setcounter{Assumption}{4}
\begin{AssumptionOpt}{$'$}[Non-uniform Ruelle bound]\label{hyp:diff:RBnonunif}
	For any $P\in\Gibbs$, for any $N\geq 1$, for $\sigmab^{\otimes N}$-almost all $\{\x_1,\dots,\x_N\}\subset\states^N$, the following holds:
	\begin{equation}\label{eq:diff:RBnonunif}
		\corr{N}{P}(\x_1,\dots,\x_N)\leq \prod_{i=1}^N e^{\msb{a}(\x_i) + \msb{b}(\x_i)}.
	\end{equation}
\end{AssumptionOpt}

\begin{Remarks}
%\comment{Note that this stronger regularity condition eventually leads to a smaller (less explicit) uniqueness regime (see Example \ref{ex:diff:reg} below).}
The Ruelle bound assumption guarantees that the sequence $\rho^{(P)}=(\corr{N}{P})_N$ of correlation functions of any $P\in\Gibbs$ is an element of the Banach space $\Banach$, for $\constan(\x)=e^{\msb{a}(\x)+\msb{b}(\x)}$.

The choice of functions $\msb{a}$ and $\msb{b}$ has to take into account all three of the above assumptions, so some tuning is needed; in particular, the Ruelle bound has to hold for precisely $\constan(\x)\defeq e^{\msb{a}(\x)+\msb{b}(\x)}$. Indeed, in order to prove that a Ruelle bound for such a choice of $\msb{c}$ holds for the correlation functions of any Gibbs point process, the proof becomes more technical (see Corollary 2 and Theorems 2 and 3 of \cite{poghosyan_zessin_2020}). It is however true that, under some additional uniform assumption (either for the conditional energy, as in Example \ref{ex:diff:reg} below, or for the Ruelle bound, as in Subsection \ref{sec:diff:RBunif}) one is able to tune the constants in such a way that $\msb{c} = e^{\msb{a}+\msb{b}}$.

%We will see in Subsection \ref{sec:diff:RBunif} how the above assumptions are consistent with the ``uniform'' ones of the previous section.

%Indeed, assume $\msb{a}\equiv a$ and $\msb{b}\equiv b$ to be constant. One then has a uniform Ruelle bound with constant $\constan = e^{a+b}$, stability holds with $\beta \stabconst_\Phi=a$, and Assumption \ref{hyp:diff:4} yields $\zCrit=a(\RuelleC e^{a+b})$. In particular, notice how this is precisely the bound obtained in Example \ref{ex:diff:RB} below.
\end{Remarks}

\setcounter{Example}{2}
\begin{Example}\label{ex:diff:reg}
	Consider a potential $\phi = \phi_{hc} + \phi_l$, satisfying Assumptions \ref{hyp:diff:0c}+\ref{hyp:diff:1}, given by the sum of a hard-core potential (with hard-core diameter $R>0$) and a bounded potential $\phi_l$, on $[R,+\infty)$:
	\begin{equation*}
		\exists M_\phi>0: \phi_l(u)\leq M_\phi \ \forall u\geq R.
	\end{equation*}
	In particular, we recall from \eqref{eq:diff:condStab} that there exists a constant $\bar\stabconst_\Phi\geq 0$ such that, for any $\x=(x,m)\in\states$, for any $\xi\in\confs^{\tempered}$,
	%we can apply the existence result of Theorem \ref{thm:diff:existence}. In  
	%Moreover, Assumption \ref{hyp:diff:RBnonunif} holds with $\msb{a}(\x) = \bar \stabconst_\Phi(1+\norm{m}^{d+\delta})$ and $\msb{b}(\x) \equiv 2\stabconst_\Phi$, where $\bar \stabconst_\Phi\geq 0$ is a constant such that, for any $\x=(x,m)\in\states$, for any $\xi\in\confs^{\temp}$,
 	\begin{equation}\label{eq:diff:excondStab}
 		\CondEn{\x}{\xi} \geq -\bar \stabconst_\Phi(1+\norm{m}^{d+\delta}).
	\end{equation}
	We show here that there exist functions $\msb{a}$ and $\msb{b}$, and a threshold activity $\zCrit>0$ such that Assumptions \ref{hyp:diff:4} and \ref{hyp:diff:RBnonunif} hold for any $z\in(0,\zCrit)$.
	\begin{proof}
	Using \eqref{eq:diff:basicbound}, we have $\lvert e^{-\beta\Phi}-1\rvert \leq \abs{\beta\bar \Phi}e^{2\beta\stabconst_\Phi}$, and the weighted regularity condition follows as soon as
	\begin{equation*}
	\begin{split}
		ze^{2\beta\stabconst_\Phi} \int_\states e^{\msb{a}(\x_2)}\int_0^{\interval} \beta\abs{\bar\phi(x_2 + m_2(s) - x_1 - m_1(s))}ds\ \1_{\{\lvert x_2-x_1\rvert\leq a_0 + \norm{m_2} + \norm{m_1}\}} \, \sigmab(d\x_2) \leq \msb{a}(\x_1).
	\end{split}
	\end{equation*}
	Considering a function $\msb{a}$ of the form $\msb{a}(x,m)=\msb{a}(m) = A(1+ \norm{m}^{d+\delta})$, for some constant $A>0$ to be determined, and recalling that $\Psi(x,m)\geq -\constPsi\norm{m}^{d+\delta}$, this reduces to
	\begin{equation*}
	\begin{split}
		z\beta e^{2\beta\stabconst_\Phi} \int_{\marks} e^{A(1 + \norm{m_2}^{d+\delta})} \int_{\IR^d}\int_0^1 \big\lvert \phi\big(\abs{x_1+m_1(s) - x_2 - m_2(s)}\big)\big\rvert ds\1_{\{\lvert x_1-x_2\rvert\leq a_0 + \norm{m_1} + \norm{m_2}\}}\\
		dx_2\ e^{\constPsi\norm{m_2}^{d+\delta}} \refmark(dm_2)
		 \leq A(1+ \norm{m_1}^{d+\delta}).
	\end{split}
	\end{equation*}
	Estimating the left hand side leads to:
	\begin{equation*}
		z\beta e^{2\beta\stabconst_\Phi} \int_{\marks} \left(b_d R^d + M_\phi k_d b_d (a_0^d + \norm{m_1}^d + \norm{m_2}^d)\right) e^{A(1 + \norm{m_2}^d) + \constPsi\norm{m_2}^{d+\delta}} \refmark(dm_2),
	\end{equation*}
	where $k_d$ is such that $(x+y+z)^d\leq k_d(x^d+y^d+z^d)$, and $b_d$ the volume of the unit ball in $\IR^d$. Setting 
	\begin{equation*}
		\upsilon_A:=\int e^{A(1+\norm{m}^{d+\delta}) + \constPsi \norm{m}^{d+2\delta}} \refmark(dm),	
	\end{equation*}
	which is finite thanks to the definition of the measure $R$, we can fix $A$ by the following (note that $A\geq \bar\stabconst_\Phi$; the reason for this choice will be apparent shortly):
	\begin{equation*}
		A \defeq \sup_{u\geq 0} \frac{\bar\stabconst_\Phi(1+u^{d+\delta})\ \vee\ b_d\left(R^d + M_\phi k_d (a_0^d + u^d + 1)\right)}{1+u^{d+\delta}} < +\infty,	
	\end{equation*}
	so that the regularity assumption is satisfied for $\msb{a}(x,m) = A(1 + \norm{m_1}^{d+\delta})$, $\msb{b} \equiv 2\stabconst_\Phi$, and
	\begin{equation*}
		\zCrit\defeq (\upsilon_A \beta e^{2\beta\stabconst_\Phi})^{-1}.
	\end{equation*}
	%Note that, as the value of the integral $\upsilon_A$ is not easily controlled, the threshold activity $\zCrit>0$ may be very small.
	
	From the representation \eqref{eq:diff:corrGibbs} of the correlation functions and \eqref{eq:diff:excondStab}, the Ruelle bound of Assumption \ref{hyp:diff:RBnonunif} follows as well, since $A\geq \bar\stabconst_\Phi$ by construction.
 	\end{proof}
\end{Example}

\begin{Proposition}\label{prop:diff:corrKS}
	Let $\beta>0$, $z>0$. Under Assumptions \ref{hyp:diff:0b}+\ref{hyp:diff:4}+\ref{hyp:diff:RBnonunif}, the correlation functions $(\corr{N}{P})_N$ of any $P\in\Gibbs$ solve, for all $N\geq 1$, for $\sigmab^{\otimes (N+1)}$-almost all $(\x_0,\dots,\x_N)\in\states^{N+1}$, the \emph{Kirkwood--Salsburg equation} \ref{eq:diff:KS} defined above.
\end{Proposition}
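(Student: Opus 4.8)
The plan is to single out the point $\x_0$ and to combine the representation \eqref{eq:diff:corrGibbs} with a Mayer-type expansion and the Nguyen--Zessin partial-integration formula of \cite{nguyen_zessin_1979}. First I would apply \eqref{eq:diff:corrGibbs} to $\corr{N+1}{P}(\x_0,\dots,\x_N)$ and exploit the additivity of the two energies, $\PairEnergy(\x_0,\dots,\x_N) = \PairEnergy(\x_1,\dots,\x_N) + \CondEn{\x_0}{\x_1,\dots,\x_N}$ and $\CondEn{\x_0,\dots,\x_N}{\xi} = \CondEn{\x_1,\dots,\x_N}{\xi} + \CondEn{\x_0}{\xi}$, in order to factor out the prefactor $e^{-\beta\CondEn{\x_0}{\x_1,\dots,\x_N}}$ and to isolate the single factor $e^{-\beta\CondEn{\x_0}{\xi}}$ inside the integral over $\confs$.

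The next step is the Mayer expansion. Writing $f(\y)\defeq e^{-\beta\Phi(\x_0,\y)}-1$, I would expand $e^{-\beta\CondEn{\x_0}{\xi}} = \prod_{\y\in\xi}(1+f(\y)) = \sum_{\eta\subset\xi}\prod_{\y\in\eta}f(\y)$. This is legitimate $P$-almost surely: a first-moment computation gives $\IE_P[\sum_{\y\in\xi}\abs{f(\y)}] = z\int_\states\abs{f(\y)}\corr{1}{P}(\y)\,\sigma(d\y)\leq z\,\constan\,\RuelleC<+\infty$ thanks to the regularity Assumption \ref{hyp:diff:2} and the uniform Ruelle bound \ref{hyp:diff:RB}, so $\sum_{\y\in\xi}\abs{f(\y)}<+\infty$ almost surely and the infinite product converges absolutely. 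Grouping the expansion by $k=\abs{\eta}$ leaves, for each $k\geq 1$, the term $T_k \defeq \int_\confs\sum_{\eta\subset\xi,\,\abs{\eta}=k}\big(\prod_{\y\in\eta}f(\y)\big)e^{-\beta\CondEn{\x_1,\dots,\x_N}{\xi}}\,P(d\xi)$, while the $k=0$ term reproduces $\corr{N}{P}(\x_1,\dots,\x_N)$ via \eqref{eq:diff:corrGibbs}.

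The heart of the argument is the evaluation of $T_k$. Here I would invoke the $k$-fold iterate of the Nguyen--Zessin equation (a reduced-Campbell identity for $P\in\Gibbs$, obtained by induction from the single-point version in \cite{nguyen_zessin_1979}): it turns the sum over $k$-point subsets $\eta\subset\xi$ into a $\tfrac{z^k}{k!}\sigma^{\otimes k}$-integral, at the price of the factors $e^{-\beta\CondEn{\y_1,\dots,\y_k}{\xi}}$ and $e^{-\beta\PairEnergy(\y_1,\dots,\y_k)}$ for the freshly integrated points $\y_1,\dots,\y_k$. Combining the conditional energies additively and recognising, via \eqref{eq:diff:corrGibbs}, that $\int_\confs e^{-\beta\CondEn{\x_1,\dots,\x_N,\y_1,\dots,\y_k}{\xi}}P(d\xi) = e^{\beta\PairEnergy(\x_1,\dots,\x_N,\y_1,\dots,\y_k)}\corr{N+k}{P}(\x_1,\dots,\x_N,\y_1,\dots,\y_k)$, the decomposition $\PairEnergy(\x_1,\dots,\x_N,\y_1,\dots,\y_k)=\PairEnergy(\x_1,\dots,\x_N)+\PairEnergy(\y_1,\dots,\y_k)+\CondEn{\x_1,\dots,\x_N}{\y_1,\dots,\y_k}$ makes all the spurious energy terms cancel, and I expect to be left exactly with $e^{-\beta\PairEnergy(\x_1,\dots,\x_N)}T_k = \tfrac{z^k}{k!}\int\prod_j f(\y_j)\,\corr{N+k}{P}(\x_1,\dots,\x_N,\y_1,\dots,\y_k)\,\sigma^{\otimes k}(d\y)$, which is precisely the $k$-th summand of \ref{eq:diff:KS}.

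Finally I would justify every interchange (the expansion of the product, Tonelli for the sum over $\eta$ against $P$, and the rearrangement of the resulting series) by running the same computation with $f$ replaced by $\abs{f}$: the Ruelle bound $\corr{N+k}{P}\leq\constan^{N+k}$ and $\int\abs{f}\,\sigma\leq\RuelleC$ furnish the majorant $\sum_{k\geq 0}\tfrac{z^k}{k!}\constan^{N+k}\RuelleC^k = \constan^N e^{z\constan\RuelleC}<+\infty$. I expect the main obstacle to be exactly this bookkeeping: keeping the energy splittings consistent through the iterated Nguyen--Zessin formula while simultaneously controlling absolute convergence. The two new hypotheses enter precisely here --- Assumption \ref{hyp:diff:2} makes the single-point integrals finite and Assumption \ref{hyp:diff:RB} makes the $k$-sum summable --- which is why this uniform case is separated from the weighted bound treated in Section \ref{sec:diff:RBnonunif}.
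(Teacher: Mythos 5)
Your proposal is correct and follows essentially the same route as the paper: isolate $\x_0$ via the additivity of $\PairEnergy$ and $E_\Phi(\cdot\,\vert\,\xi)$, expand $e^{-\beta\CondEn{\x_0}{\xi}}$ over subsets of $\xi$ (the paper writes this with the factorial measure $\xi^{(k)}$, which is the same sum), convert to $\tfrac{z^k}{k!}\sigma^{\otimes k}$-integrals of $\corr{N+k}{P}$ via the iterated Nguyen--Zessin identity, and dominate everything by $\constan^N e^{z\constan\RuelleC}$ using Assumptions \ref{hyp:diff:2} and \ref{hyp:diff:RB}. The only cosmetic difference is that the paper additionally normalises the distinguished point to $\x_0=\msb{i}(\gamma)$ from \eqref{eq:diff:I}, a choice that only becomes relevant for the contraction estimate in the following proposition.
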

\begin{proof}
	%Thanks to the stability of $\Phi$, we can define $\msb{i}$ as in \eqref{eq:diff:I}, and assume, without loss of generality, that $\x_0=\msb{i}(\gamma)$.
	
	We note first that the absolute convergence of the right hand side of \eqref{eq:diff:KSoperator} is guaranteed by the Ruelle bound and the regularity condition. Indeed, for $z<\zCrit$, one has
	\begin{equation*} 
	\begin{split}
		&\sum_{k=1}^{+\infty}\frac{z^k}{k!}\int \prod_{j=1}^k\abs{e^{-\beta\Phi(\x_0,\y_j)}-1}\corr{N+k}{P}(\x_1,\dots,\x_N,\y_1,\dots,\y_k)\ \sigmab^{\otimes k}(d\y_1,\dots,d\y_k)\\
		&\overset{\mathclap{\eqref{eq:diff:RBnonunif}}}{\leq}\ \ \prod_{i=1}^{N}e^{\msb{a}(\x_i)+\msb{b}(\x_i)}\sum_{k=1}^{+\infty}\frac{z^k}{k!}\int \prod_{j=1}^k e^{\msb{a}(\y_j)+ \msb{b}(\y_j)}\abs{ e^{-\beta\Phi(\x_i,\y_j)}-1} \ \sigmab^{\otimes k}(d\y_1,\dots,d\y_k)\\
		&\overset{\mathclap{\eqref{eq:diff:reg}}}{\leq}\ \ \prod_{i=1}^{N}e^{\msb{a}(\x_i)+\msb{b}(\x_i)} \sum_{k=1}^{+\infty}\frac{(\zCrit\msb{a}(\x_i))^k}{k!} = \prod_{i=1}^{N}e^{\msb{a}(\x_i)+\msb{b}(\x_i)+\zCrit\msb{a}(\x_i)}.
	\end{split}
	\end{equation*}
	Consider the $(N+1)$-point correlation function of a Gibbs point process $P$:
	\begin{equation*}
	\begin{split}
		&\corr{N+1}{P}(\x_0,\dots,\x_N) = e^{-\beta\PairEnergy(\x_0,\dots,\x_N)} \int_\confs e^{-\beta\CondEn{\x_0,\dots,\x_N}{\xi}}P(d\xi)\\
		&= e^{-\beta\CondEn{\x_0}{\x_1,\dots,\x_N}} e^{-\beta\PairEnergy(\x_1,\dots,\x_N)} \int_\confs e^{-\beta\CondEn{\x_0}{\xi}} e^{-\beta\CondEn{\x_1,\dots,\x_N}{\xi}}P(d\xi).
	\end{split}
	\end{equation*}
	Using the factorial measure $\xi^{(k)}$, we have the following expansion:
	\begin{equation*}
		e^{-\beta\CondEn{\x_0}{\xi}} = 1 + \sum_{k=1}^{+\infty}\frac{1}{k!} \int_{\states^k} \prod_{j=1}^k(e^{-\beta\Phi(\x_0,\y_j)}-1)\, \xi^{(k)}(d\y_1,\dots,d\y_k),
	\end{equation*}
	which is indeed absolutely convergent, since using the Georgii--Nguyen--Zessin equations (see \cite{nguyen_zessin_1979}) one has:
	\begin{equation}\label{eq:diff:unifConvergent}
	\begin{split}
		\int_\confs& \left(1 + \sum_{k=1}^{+\infty}\frac{1}{k!} \int_{\states^k} \prod_{j=1}^k\abs{e^{-\beta\Phi(\x_0,\y_j)}-1}\, \xi^{(k)}(d\y_1,\dots,d\y_k)\right)\, P(d\xi)\\
		&\overset{\mathclap{\text{(GNZ)}}}{=}\ \ 1 + \sum_{k=1}^{+\infty}\frac{z^k}{k!}\int_{\states^k} \prod_{j=1}^k \abs{e^{-\beta\Phi(\x_0,\y_j)}-1}\, e^{-\beta \PairEnergy(\y_1,\dots,\y_k)}\\
		&\phantom{1 + \sum_{k=1}^{+\infty}\frac{z^k}{k!}\int_{\states^k}} \int_\confs e^{-\beta\CondEn{\y_1,\dots,\y_k}{\xi}} P(d\xi)\, \sigmab^{\otimes k}(d\y_1,\dots,d\y_k)\\
		&=\ \  1 + \sum_{k=1}^{+\infty}\frac{z^k}{k!} \int_{\states^k} \prod_{j=1}^k\abs{e^{-\beta\Phi(\x_0,\y_j)}-1}\, \corr{k}{P}(\y_1,\dots,\y_k)\sigmab^{\otimes k}(d\y_1,\dots,d\y_k)<+\infty,
		%\\
		%&\overset{\mathclap{\eqref{eq:diff:RBnonunif}}}{\leq}\ \  1 + \sum_{k=1}^{+\infty}\frac{z^k}{k!} \int_{\states^k} \prod_{j=1}^k e^{\msb{a}(\y_j)+ \msb{b}(\y_j)} \abs{e^{-\beta\Phi(\x_0,\y_j)}-1}\, \sigmab^{\otimes k}(d\y_1,\dots,d\y_k)\leq e^{z\constan\RuelleC} <+\infty,
	\end{split}
	\end{equation}
	where we have used the Ruelle bound and the regularity assumption as before, to prove the integral is finite.
	%\ref{hyp:diff:2}.
	We can then exchange summation over $k$ and integration over $\confs$, yielding
	\begin{equation*}
	\begin{split}
		&e^{-\beta\PairEnergy(\x_1,\dots,\x_N)} \int_\confs e^{-\beta\CondEn{\x_0}{\xi}} e^{-\beta\CondEn{\x_1,\dots,\x_N}{\xi}}P(d\xi)\\
		&= e^{-\beta\PairEnergy(\x_1,\dots,\x_N)} \int_\confs e^{-\beta\CondEn{\x_1,\dots,\x_N}{\xi}}\, P(d\xi)\\
		&\phantom{1 + }+ \sum_{k=1}^{+\infty}\frac{1}{k!}\int_\confs \int_{\states^k}e^{-\beta\PairEnergy(\x_1,\dots,\x_N) - \beta\CondEn{\x_1,\dots,\x_N}{\xi}} \prod_{j=1}^k(e^{-\beta\Phi(\x_0,\y_j)}-1)\, \xi^{(k)}(d\y_1,\dots,d\y_k)\, P(d\xi)\\
		&= \corr{N}{P}(\x_1,\dots,\x_N)\\
		&\phantom{1 + }+ \sum_{k=1}^{+\infty}\frac{z^k}{k!}\int_{\states^k}
		%e^{-\beta\left(\PairEnergy(\x_1,\dots,\x_N) - \CondEn{\x_1,\dots,\x_N}{\y_1,\dots,\y_k} -  \PairEnergy(\y_1,\dots,\y_k)\right)}
		e^{-\beta \PairEnergy(\x_1,\dots,\x_N,\y_1,\dots,\y_k)}
		\prod_{j=1}^k(e^{-\beta\Phi(\x_0,\y_j)}-1)\\
		&\phantom{1 + \sum_{k=1}^{+\infty}\frac{1}{k!} \int_{\states^k}}\int_\confs e^{-\beta\CondEn{\x_1,\dots,\x_N,\y_1,\dots,\y_k}{\xi}} \, P(d\xi)\, \sigmab^{\otimes k}(d\y_1,\dots,d\y_k)\\
		&= \corr{N}{P}(\x_1,\dots,\x_N)\\
		&\phantom{1 + }+ \sum_{k=1}^{+\infty}\frac{z^k}{k!} \int_{\states^k}\prod_{j=1}^k(e^{-\beta\Phi(\x_0,\y_j)}-1)\corr{N+k}{P}(\x_1,\dots,\x_N,\y_1,\dots,\y_k)\, \sigmab^{\otimes k}(d\y_1,\dots,d\y_k),
	\end{split}
	\end{equation*}
	and concluding the proof.
\end{proof}
	
\begin{Proposition}\label{prop:diff:KSX}
	Let $\Phi$ such that Assumptions \ref{hyp:diff:0b}+\ref{hyp:diff:4}+\ref{hyp:diff:RBnonunif} hold, and set $\constan(\x)\defeq e^{\msb{a}(\x) + \msb{b}(\x)}$. For any $\beta>0$ and $z\in(0,\zCrit)$, the operator $\K_z$ is a contraction in $\Banach$. For such activities there exists then at most one solution of \ref{eq:diff:KS} in $\Banach$.
\end{Proposition}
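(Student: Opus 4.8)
The plan is to follow the proof of Proposition \ref{prop:diff:KS} verbatim in structure, merely replacing the uniform constant $\constan$ by the weight $\constan(\x)=e^{\msb{a}(\x)+\msb{b}(\x)}$ and the uniform regularity by the weighted condition \eqref{eq:diff:reg}. Fix $z\in(0,\zCrit)$ and take $r\in\Banach$ with $\norm{r}_\constan\le 1$, so that $\abs{r_M(\x_1,\dots,\x_M)}\le\prod_{i=1}^M\constan(\x_i)$ for every $M$. I would estimate $(\K_z r)_{N+1}(\x_0,\dots,\x_N)$ from \eqref{eq:diff:KSoperator} term by term. The prefactor is controlled by weak stability \hyp{w.st.}: as in the proof of Proposition \ref{prop:diff:corrKS}, I take the peeled point $\x_0$ to be the distinguished point of $\{\x_0,\dots,\x_N\}$, so that $\sum_{i=1}^N\Phi(\x_0,\x_i)\ge-\msb{b}(\x_0)$ and hence $e^{-\beta\sum_i\Phi(\x_0,\x_i)}\le e^{\beta\msb{b}(\x_0)}$. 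In the bracket, the weight bound on $r_N$ and $r_{N+k}$ lets me factor out $\prod_{i=1}^N\constan(\x_i)$, and since each $\y_j$ enters only through $\abs{e^{-\beta\Phi(\x_0,\y_j)}-1}\constan(\y_j)$ the $k$-fold integral factorises, so the series sums to an exponential:
\begin{equation*}
  \abs{(\K_z r)_{N+1}(\x_0,\dots,\x_N)}\le e^{\beta\msb{b}(\x_0)}\Big(\prod_{i=1}^N\constan(\x_i)\Big)\exp\Big(z\int\abs{e^{-\beta\Phi(\x_0,\y)}-1}\,\constan(\y)\,\sigma(d\y)\Big).
\end{equation*}
The term $(\K_z r)_1(\x_0)$ (the case $N=0$, without the leading $r_N$) is bounded in the same way, and its convergence — like the convergence of the series above — is guaranteed by Assumption \ref{hyp:diff:4} together with the non-uniform Ruelle bound \ref{hyp:diff:RBnonunif}, exactly as the analogue of Proposition \ref{prop:diff:corrKS} is obtained.

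Next I would invoke the weighted regularity Assumption \ref{hyp:diff:4}. Since $\constan(\y)=e^{\msb{a}(\y)+\msb{b}(\y)}$, the defining inequality \eqref{eq:diff:reg} reads $\int\abs{e^{-\beta\Phi(\x_0,\y)}-1}\constan(\y)\sigma(d\y)\le\msb{a}(\x_0)/\zCrit$. Hence the exponential above is at most $e^{(z/\zCrit)\msb{a}(\x_0)}$, and dividing through by $\constan(\x_0)=e^{\msb{a}(\x_0)+\msb{b}(\x_0)}$ to read off the operator norm gives
\begin{equation*}
  \frac{\abs{(\K_z r)_{N+1}(\x_0,\dots,\x_N)}}{\prod_{i=0}^N\constan(\x_i)}\le \exp\big((\beta-1)\msb{b}(\x_0)+(z/\zCrit-1)\,\msb{a}(\x_0)\big).
\end{equation*}
Because the right-hand product $\prod_{i=0}^N\constan(\x_i)$ is symmetric, it does not matter that $\x_0$ was chosen as the distinguished point: the bound holds for the tuple as an element of $\Banach$, uniformly in $N$ and in $\x_1,\dots,\x_N$, so $|||\K_z|||_\constan$ is the supremum of the right-hand side over $\x_0\in\states$.

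It then remains to argue that this supremum is some $\theta<1$. For $z<\zCrit$ the coefficient $z/\zCrit-1$ is strictly negative, and — with the choice of $\msb{a},\msb{b}$ and $\zCrit$ realised in Example \ref{ex:diff:reg}, where $\msb{a}$ is bounded below by a positive constant and the stability contribution is absorbed into the weight — the exponent stays negative and bounded away from $0$, forcing $\norm{\K_z r}_\constan\le\theta\norm{r}_\constan$. Thus $\K_z$ is a contraction on $\Banach$. Finally, if $r,r'\in\Banach$ both solve the fixed-point form $r=\K_z r+\underline{1}_z$ of \ref{eq:diff:KS}, then $r-r'=\K_z(r-r')$ yields $\norm{r-r'}_\constan\le\theta\norm{r-r'}_\constan$, whence $r=r'$; this gives at most one solution of \ref{eq:diff:KS} in $\Banach$.

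I expect the main obstacle to be exactly this last step: guaranteeing that $\sup_{\x_0}\exp\big((\beta-1)\msb{b}(\x_0)+(z/\zCrit-1)\msb{a}(\x_0)\big)$ over the non-compact space $\states$ is \emph{strictly} below $1$. This is where the weak-stability prefactor $e^{\beta\msb{b}(\x_0)}$ must be dominated by the $e^{\msb{b}(\x_0)}$ built into the weight, and the residual $\msb{a}$-exponent must remain negative and bounded away from $0$ uniformly in $\x_0$ — both of which rely on $\inf_\x\msb{a}(\x)>0$ and on the $\beta$-bookkeeping that is fixed precisely when $\msb{a},\msb{b},\zCrit$ are constructed. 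The rest of the argument is the routine Kirkwood--Salsburg machinery already set up in the excerpt.
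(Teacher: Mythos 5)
Your argument is essentially the paper's own proof: bound the prefactor via weak stability \hyp{w.st.}, pull the weight $\prod_i\constan(\x_i)$ out of $r_{N+k}$, factorise the $k$-fold integral, apply the weighted regularity \eqref{eq:diff:reg} to sum the series to $e^{(z/\zCrit)\msb{a}(\x_0)}$, and divide by $\constan(\x_0)=e^{\msb{a}(\x_0)+\msb{b}(\x_0)}$. The two caveats you flag --- the factor $e^{\beta\msb{b}(\x_0)}$ from the stability prefactor versus the bare $e^{\msb{b}(\x_0)}$ built into the weight, and the need for $\inf_{\x}\msb{a}(\x)>0$ to promote the pointwise strict inequality to an operator norm strictly below $1$ --- are legitimate, but they are equally present in (and silently elided by) the paper's proof, which simply writes $e^{\msb{b}(\x_0)}e^{\msb{a}(\x_0)z/\zCrit}<\constan(\x_0)$ and concludes $|||\K_z|||_\constan<1$; in the intended application (Example \ref{ex:diff:reg}) both points are harmless since $\msb{a}\geq A>0$ there.
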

\begin{proof}
	For any $r\in \Banach$, with $\norm{r}_\constan\leq 1$, we estimate
	\begin{equation*}
	\begin{split}
		\abs{(\K_z r)_{N+1}&(\x_0,\dots,\x_N)}\leq e^{-\sum_{i=1}^N\Phi(\x_0-\x_i)} \bigg(\prod_{i=1}^N\constan(\x_i)\\
		&+ \sum_{k=1}^{+\infty}\frac{z^k}{k!}\int \prod_{j=1}^k\abs{e^{-\Phi(\x_0,\y_j)}-1} \prod_{i=1}^N\constan(\x_i) \prod_{j=1}^k\constan(\y_j)\ \sigmab^{\otimes k}(d\y_1,\dots,d\y_k)\bigg)\\
		&\overset{\eqref{eq:diff:Pstab}}{\leq} e^{\msb{b}(\x_0)}  \Big(1 + \sum_{k=1}^{+\infty}\frac{z^k}{k!}\int \prod_{j=1}^k \constan(\y_j)\abs{ e^{-\Phi(\x_0,\y_j)}-1} \ \sigmab^{\otimes k}(d\y_1,\dots,d\y_k)\Big) \prod_{i=1}^N\constan(\x_i)\\
		&= e^{\msb{b}(\x_0)}  \Big(1 + \sum_{k=1}^{+\infty}\frac{z^k}{k!}\int \prod_{j=1}^k e^{\msb{a}(\y)+\msb{b}(\y)}\abs{ e^{-\Phi(\x_0,\y_j)}-1} \ \sigmab^{\otimes k}(d\y_1,\dots,d\y_k)\Big)\prod_{i=1}^N\constan(\x_i)\\
		&\overset{\eqref{eq:diff:reg}}{\leq} e^{\msb{b}(\x_0)} \sum_{k=0}^{+\infty}\frac{(z/\zCrit)^k\msb{a}^k(\x_0)}{k!} \prod_{i=1}^N\constan(\x_i)\\
		&= e^{\msb{b}(\x_0)} e^{\msb{a}(\x_0)z/\zCrit} \prod_{i=1}^N\constan(\x_i)\leq e^{-a}\prod_{i=0}^N\constan(\x_i).
	\end{split}
	\end{equation*}
	The Kirkwood--Salsburg operator is then a contraction: $|||\K_z|||_\constan < 1$, so that \eqref{eq:diff:fixed} has at most one fixed point, and the associated Kirkwood--Salburg equations \ref{eq:diff:KS} have at most one solution.
\end{proof}
 
\subsubsection{The case of uniform Ruelle bounds}\label{sec:diff:RBunif}

In Subsection \ref{sec:diff:RB} we presented techniques to obtain, under relatively weak assumptions for the interaction, an activity regime in which a uniform Ruelle bound holds for a constant $\constanz>0$. A natural question is then whether the framework presented above applies to this case as well.
%We explore now how having such a bound compares to the results presented in the previous subsection.

We work here with energy functionals $\PairEnergy$ and activities $z>0$ such that Assumptions \ref{hyp:diff:0c}+\ref{hyp:diff:2}+\ref{hyp:diff:RB} hold, i.e. with the classical stability \hyp{st.}.

We consider the case of a Ruelle bound that holds for a constant $\constan$, \emph{uniformly} in the points $\x_1,\dots,\x_N$.
\comment{Note that, although under Assumptions \ref{hyp:diff:0c}+\ref{hyp:diff:2} one can compute some constant $\constanz$ such that the Ruelle bound \eqref{eq:diff:unifRB} is satisfied (Proposition \ref{prop:diff:RB}), this constant may not be optimal. Indeed, in some cases one already has a Ruelle bound -- not obtained via the techniques presented in Subsection \ref{sec:diff:RB} -- which yields a larger uniqueness regime (see Example \ref{ex:diff:RB} at the end of this subsection). For this reason, we introduce the following:}
\addtocounter{Assumption}{0}
\begin{AssumptionOpt}{}[Uniform Ruelle bound]\label{hyp:diff:RB}
	Assume there exist a constant $\constan>0$ such that, for any $P\in\Gibbs$, for any $N\geq 1$, for $\sigmab^{\otimes N}$-almost all $\{\x_1,\dots,\x_N\}\subset\states^N$, its correlation function $\corr{N}{P}$ satisfy, uniformly in $\{\x_1,\dots,\x_N\}$, the following Ruelle bound:
	\begin{equation}\label{eq:diff:unifRB}
		\corr{N}{P}(\x_1,\dots,\x_N)\leq \constan^N.	
	\end{equation}
\end{AssumptionOpt}

We want to show that this set of assumptions yields results which are consistent with those presented in the non-uniform case, in particular Propositions \ref{prop:diff:corrKS} and \ref{prop:diff:KSX}.

Indeed, the computations of Propositions \ref{prop:diff:corrKS} and \ref{prop:diff:KSX} are the same, except for using Assumption \ref{hyp:diff:2} to prove the absolute convergence of the series in \eqref{eq:diff:unifConvergent}. To prove that the operator is a contraction, following the proof of Proposition \ref{prop:diff:KSX} with $\msb{b}(\x)\equiv 2\beta \stabconst_\Phi$, and using the uniform Ruelle bound \eqref{eq:diff:unifRB} in place of \eqref{eq:diff:RBnonunif} yields
\begin{equation*}
\begin{split}
	\abs{(\K_z r)_{N+1}&(\x_0,\dots,\x_N)}\leq \constan^{N+1} \constan^{-1}e^{2\beta\stabconst_\Phi + z\RuelleC\constan}.
\end{split}
\end{equation*}
Hence, if there exists $\zCrit>0$ such that
\begin{equation}\label{eq:diff:unifZ}
	\constan^{-1}e^{2\beta\stabconst_\Phi + \zCrit\RuelleC\constan}<1,	
\end{equation}
the operator is a contraction. Note that this condition is equivalent to Assumption \ref{hyp:diff:4} for constant values of $\msb{a}\equiv a>0,\msb{b}\equiv b>0$. More precisely: since we want a Ruelle bound to hold for $e^{a+b}$, set $a\defeq \log(\constan)-b$, if $\constan>e^b$. Condition \eqref{eq:diff:unifZ} then reads
\begin{equation*}
	\constan^{-1}e^{2\beta\stabconst_\Phi + \zCrit\RuelleC\constan} <1 \iff b+\zCrit\RuelleC\constan < \log(\constan)\iff \zCrit\leq a(\RuelleC\constan)^{-1},
\end{equation*}	
which corresponds to \eqref{eq:diff:reg} for this choice of constants.

%\begin{Remark}
%	The first condition in the above assumption serves to guarantee that the norm of the Kirkwood--Salsburg operator is not just bounded but is indeed smaller than $1$. In the non-uniform case, this automatically follows from the regularity assumption \ref{hyp:diff:4} because of the specific form of the Ruelle constant $\constan$.	Under Assumption \ref{hyp:diff:0}, the value for $\msb{b}$ is determined to be $2\beta \stabconst_\Phi$, so we can tune $\msb{a}$.
%\end{Remark}
\begin{Remark}
	If $\constan\leq e^b$, e.g. for non-negative potentials, one can optimise the value of $\constan$. See Example \ref{ex:diff:RB} below.
	
	Without \eqref{eq:diff:unifZ}, one can only prove that the norm of the Kirkwood--Salsburg operator is bounded, but not that it is smaller than $1$. We present below two examples in which such a $\zCrit>0$ exists.
\end{Remark}

%We are now ready to present the analogue of Propositions \ref{prop:diff:corrKS} and \ref{prop:diff:KSX} for the case of a uniform Ruelle bound. After the proof, we present and compare two examples for which the result holds.

%\begin{Proposition}
%	Let $z>0$, $\beta>0$. Under Assumptions \ref{hyp:diff:0}+\ref{hyp:diff:2}+\ref{hyp:diff:RB}, the correlation functions $(\corr{N}{P})_N$ of any $P\in\Gibbs$ solve, for all $N\geq 1$, for $\sigmab^{\otimes (N+1)}$-almost all $(\x_0,\dots,\x_N)\in\states^{N+1}$, the \emph{Kirkwood--Salsburg equation} \ref{eq:diff:KS} defined above.
%	
%	Moreover, for any $\beta>0$ and $z\in(0,\zCrit)$, the operator $\K_z$ is a contraction in $\Banach$. For such activities there exists then at most one solution of \ref{eq:diff:KS} in $\Banach$.
%\end{Proposition}

%\begin{proof}
%	The computations are very similar to those of Propositions \ref{prop:diff:corrKS} and \ref{prop:diff:KSX}, except for using Assumption \ref{hyp:diff:2} to prove the absolute convergence of the series in \eqref{eq:diff:unifConvergent}. 
%
%To prove that the operator is a contraction, following the proof of Proposition \ref{prop:diff:KSX} with $\msb{b}(\x)\equiv 2\beta \stabconst_\Phi$, and using the uniform Ruelle bound \eqref{eq:diff:unifRB} in place of \eqref{eq:diff:RBnonunif} yields
%	\begin{equation*}
%	\begin{split}
%		\abs{(\K_z r)_{N+1}&(\x_0,\dots,\x_N)}\leq \constan^{N+1} \constan^{-1}e^{2\beta\stabconst_\Phi + z\RuelleC\constan}<1.
%	\end{split}
%	\end{equation*}
%\end{proof}

\setcounter{Example}{3}
\begin{Example}
	The uniform Ruelle bound holds, under Assumption \ref{hyp:diff:2}, with $\constan = \constanz $ as defined in \eqref{eq:diff:cz}, for any $z<\zRuelle$. Moreover, set $f(z)\defeq \frac{e^{2\beta\stabconst_\Phi + z\constanz\RuelleC}}{\constanz}$. We have $f(0)=1$ and
	\begin{equation*}
		f'(z) = e^{2\beta\stabconst_\Phi}\frac{e^{z\constanz\RuelleC}}{\constanz^2}\big(\RuelleC(\constanz^2 + z\constanz' \constanz - \constanz'\big).
	\end{equation*}
	so that $f'(0)<0$. Indeed,
	\begin{equation*}
		\text{sign } f'(0) = \text{sign }\big(\RuelleC\constan_0^2 - \constan_0'\big) = \text{sign } \big(\RuelleC e^{4\beta\stabconst_\Phi}(1 - e^2/\sqrt{2\pi})\big) = -1.
	\end{equation*}
	(see Figures \ref{fig:zplot2} and \ref{fig:zplot1}), The set $\{z>0: \constanz^{-1}e^{2\beta\stabconst_\Phi + z\constanz\RuelleC}<1\}$ is then non-empty, and defining
	\begin{equation*}
		\zCrit\defeq \inf \{z>0: \constanz^{-1}e^{2\beta\stabconst_\Phi + z\constanz\RuelleC}>1\}>0,
	\end{equation*}
	we have that, for any $z<\zCrit$, the norm of $\K_z$ in $\Banachz$ is smaller than $1$, i.e. is a contraction in $\Banachz$.
	
	Finally, note that, since $\lim_{z\to\zRuelle^-}\constanz = +\infty$ and $\constanz^{-1}e^{2\beta\stabconst_\Phi + z\constanz\RuelleC}=+\infty$ for $z\geq \zRuelle$, we have that $\zCrit\leq\zRuelle$. 
	\begin{figure}[ht]
\begin{center}
\begin{minipage}{.9\textwidth}
\begin{minipage}[t]{.45\textwidth}
	\includegraphics[width=\textwidth]{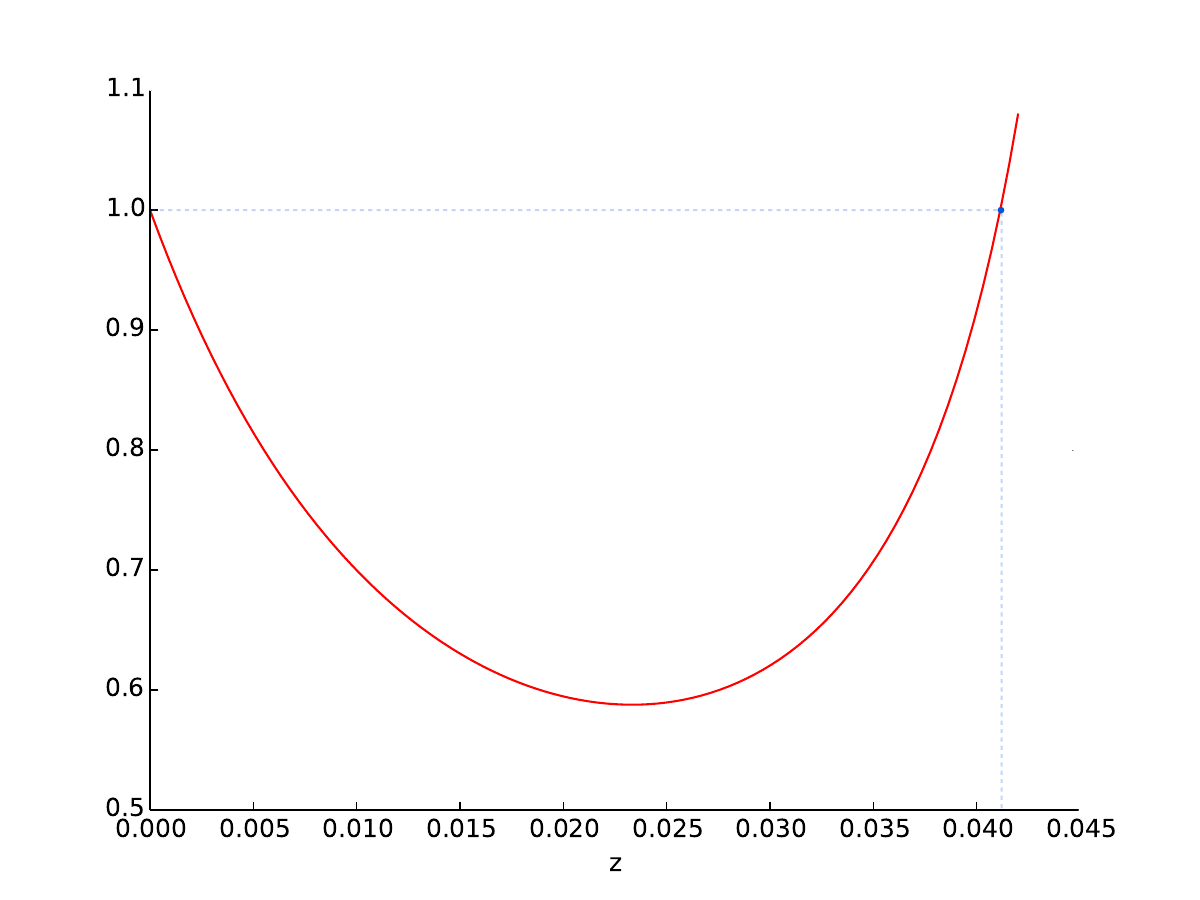}
		\captionof{figure}{Let $\stabconst_\Phi =1$, $\beta=1$, $\mathscr{C}(1) = 1$. Plot of $z\mapsto\constanz^{-1}e^{2 + z\constanz}$. The curve explodes as $z$ approaches $\tg{z}_{Ru}(1)\simeq 0.05$, and the uniqueness domain is $(0,\tg{z}_{\text{crit}}(1))$, where $\tg{z}_{\text{crit}}(1)\simeq 0.041$.}\label{fig:zplot2}
\end{minipage}\hfill
\begin{minipage}[t]{.45\textwidth}
	\includegraphics[width=\textwidth]{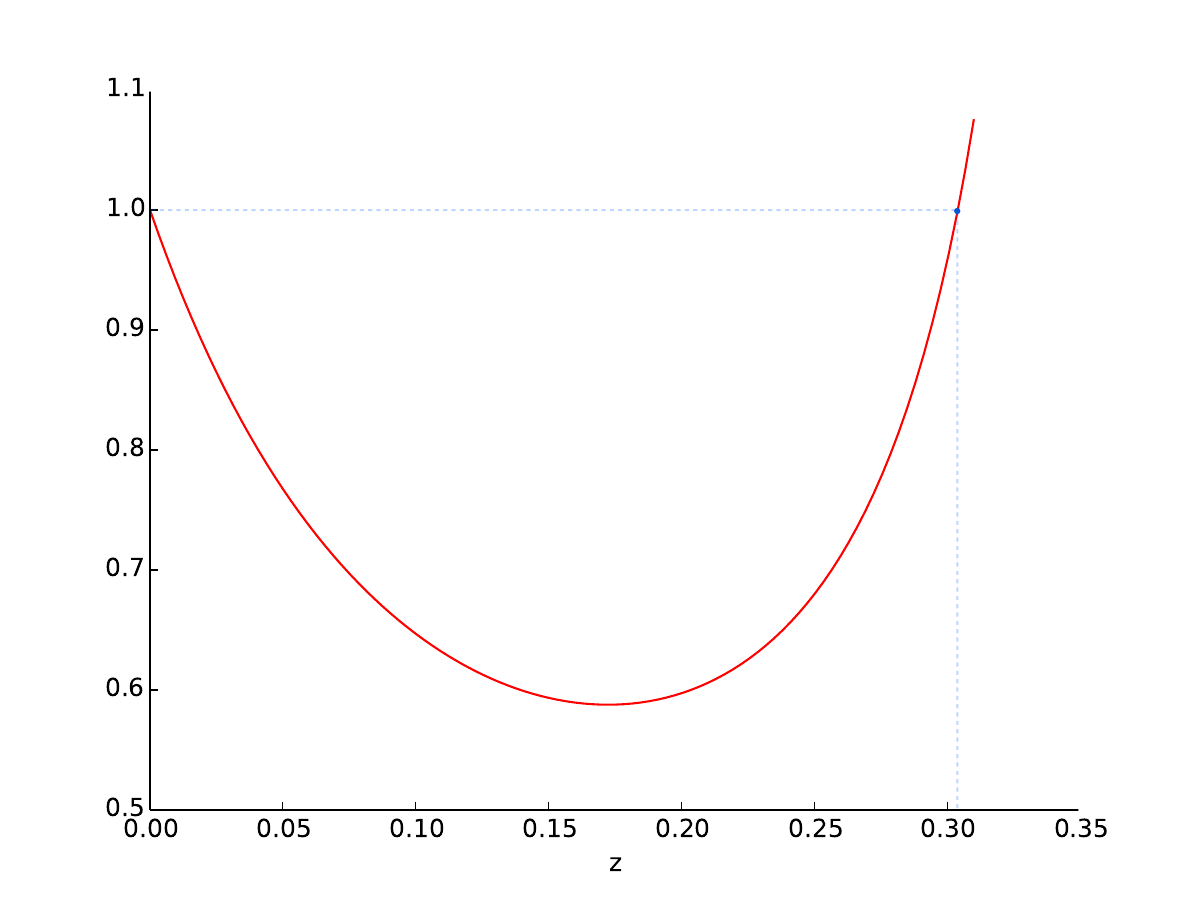}
	\captionof{figure}{Let $\stabconst_\Phi =0$ (i.e. a repulsive potential), $\beta=1$, $\mathscr{C}(1) = 1$. Plot of $z\mapsto\constanz^{-1}e^{z\constanz}$. The curve explodes as $z$ approaches $\tg{z}_{Ru}(1) = 1/e\simeq 0.37$, and the uniqueness domain is $(0,\tg{z}_{\text{crit}}(1))$, where $\tg{z}_{\text{crit}}(1)\simeq 0.304$.}\label{fig:zplot1}
\end{minipage}
\end{minipage}
\end{center}
\end{figure}
\end{Example}

\setcounter{Example}{1}
\begin{Excont}[continued]\label{ex:diff:RB}
	Consider a potential $\Phi$ in the class of Example \ref{ex:diff:potential2}. The Ruelle bound is satisfied, for any $z>0$, for $\constan = e^{3\beta\stabconst_\Phi}$ (see \eqref{eq:diff:exunifRB}).
	
	Let $\stabconst_\Phi>0$. For such a value of $\constan$, the Kirkwood--Salsburg operator $\K_z$ on $\Banach$ is a contraction as soon as $\constan^{-1}e^{2\beta\stabconst_\Phi + z\constan\RuelleC}<1$, that is for $z<\tg{z}_{\text{crit}}(1)=\beta\stabconst_\Phi(\RuelleC e^{3\beta\stabconst_\Phi})^{-1}$. With the choice of constants of Figure \ref{fig:zplot2}, one has $\tg{z}_{\text{crit}}(1)\simeq 0.050$. Notice how in this case, as we predicted at the beginning of this subsection, Assumption \ref{hyp:diff:RB} with $\constan=e^{3\beta\stabconst_{\Phi}}$ yields a larger uniqueness regime than by just using the Ruelle bound with $\constanz$.
	
	\comment{Note that, for repulsive potentials, a Ruelle bound $\msb{c}=1$ trivially holds, so one can actually find that the optimal domain is actually obtained for $\msb{c}=e}$, yielding, for $\beta=1$, $\mathscr{C}(1) = 1$, the critical activity $\tg{z}_{\text{crit}}(1)=1/e=\tg{z}_{Ru}(1)$ (cf. Figure \ref{fig:zplot1}). This is indeed analogous to taking $\msb{a}=1$ and $\msb{b}=0$ in Assumption \ref{hyp:diff:RBnonunif}.
	 %one can find that the optimal domain is obtained for $\stabconst_\Phi = (3\beta)^{-1}$, yielding, for $\beta=1$, $\mathscr{C}(1) = 1$, $\tg{z}_{\text{crit}}(1)=(3\RuelleC e)^{-1}\simeq 0.123$ (cf. Figure \ref{fig:zplot1}.} Furthermore, for repulsive potentials, a Ruelle bound with $\msb{c}=1$ trivially holds, so that one can use Assumption \ref{hyp:diff:RBnonunif} with $\msb{a}=1$ and $\msb{b}=0$ to find $\zCrit=(e\RuelleC)^{-1}\simeq 0.37$.
\end{Excont}

\subsection{Uniqueness domain}

We can now state the main result of this section. Recall that, for any $\beta>0$, we have a critical threshold $\zCrit>0$ such that for any $z\in(0,\zCrit)$, the Kirkwood--Salsburg operator $\K_z$ is a contraction in $\Banach$.

\begin{Theorem}\label{thm:diff:uniqueness}
	Let $\Energy$ be an energy functional as in \eqref{eq:diff:energy}, satisfying Assumptions \ref{hyp:diff:0b}+\ref{hyp:diff:4}+\ref{hyp:diff:RBnonunif}. For any fixed inverse temperature $\beta > 0$, there exists a critical activity $\zCrit>0$ such that, for any $z\in(0,\zCrit)$, there exists at most one infinite-volume Gibbs point process $P$ in $\Gibbs$.	
\end{Theorem}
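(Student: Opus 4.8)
The plan is to show that any two infinite-volume Gibbs point processes sharing the activity $z\in(0,\zCrit)$ and inverse temperature $\beta$ must have identical correlation functions, and then to conclude equality of the laws from the determinacy of the factorial moment problem under a Ruelle bound. All the analytic machinery has been assembled in the preceding propositions, so the theorem reduces to stringing these together and supplying the final determinacy step.

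First I would fix two processes $P_1,P_2\in\Gibbs$ and collect their correlation functions into sequences $r^{(1)}=(\corr{N}{P_1})_N$ and $r^{(2)}=(\corr{N}{P_2})_N$. In either assumption regime the pertinent Ruelle bound (Assumption \ref{hyp:diff:RB} or \ref{hyp:diff:RBnonunif}) shows, since correlation functions are nonnegative, that both sequences lie in the Banach space $\Banach$ with $\norm{r^{(i)}}_\constan\leq 1$ (here $\constan$ is the appropriate constant or function, matching the one for which the contraction below is established). By Proposition \ref{prop:diff:corrKS}, and by its non-uniform analogue in the second regime, both sequences solve the Kirkwood--Salsburg equations \ref{eq:diff:KS}, i.e. both are fixed points of $r\mapsto \K_z r+\underline{1}_z$ in $\Banach$.

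Next, because $z<\zCrit$, Proposition \ref{prop:diff:KS} (respectively Proposition \ref{prop:diff:KSX}) guarantees that $\K_z$ is a contraction on $\Banach$. Subtracting the two fixed-point identities gives $r^{(1)}-r^{(2)}=\K_z\big(r^{(1)}-r^{(2)}\big)$, whence
\begin{equation*}
	\norm{r^{(1)}-r^{(2)}}_\constan \leq |||\K_z|||_\constan\,\norm{r^{(1)}-r^{(2)}}_\constan,
\end{equation*}
and since the operator norm is strictly below $1$ this forces $r^{(1)}=r^{(2)}$. Consequently $P_1$ and $P_2$ possess the same $N$-point correlation function for every $N\geq 1$, equivalently the same factorial moment measures $\alpha_N^{(P)}$ (recall that $\corr{N}{P}$ is precisely the Radon--Nikodym density of $\alpha_N^{(P)}$).

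The final, and most delicate, step is to pass from equality of all correlation functions to equality of the measures $P_1=P_2$. This is a determinacy statement: a simple point process is pinned down by its family of correlation functions provided these obey a Ruelle-type bound. I would realise it by recovering the generating functional $\IE_P\big[\prod_{\x\in\gamma}(1-f(\x))\big]$, for test functions $f$ supported in a finite volume, as an absolutely convergent series in the $\corr{N}{P}$; the Ruelle bound together with the integrability \eqref{eq:diff:abinteg} of $\constan$ against $\sigma$ secures convergence on a class of $f$ rich enough to separate laws on $\confs$. I expect the main difficulty to lie precisely in this convergence/determinacy argument, as it is the only ingredient not already furnished by the preceding propositions, and it is exactly here that the Ruelle bound is indispensable; to close it I would appeal to the classical treatment of the moment problem for point processes (see \cite{kuna_1999,poghosyan_zessin_2020}).
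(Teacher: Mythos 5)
Your proposal is correct and follows essentially the same route as the paper: correlation functions of both processes solve the Kirkwood--Salsburg equations, lie in $\Banach$ by the Ruelle bound, and coincide because $\K_z$ is a contraction for $z<\zCrit$, after which equality of the laws follows from determinacy of the point process by its factorial moment measures. The only cosmetic difference is in the last step, where the paper simply verifies the geometric bound $\alpha^{(P)}_N(\Gamma^N)\leq (z c_\Gamma)^N$ (using that $\constan$ is constant or integrable) and invokes the classical determinacy results of Lenard and Last--Penrose, rather than expanding the generating functional explicitly as you propose.
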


\begin{proof}
	Let $\beta > 0$, $z\in(0,\zCrit)$, and consider two Gibbs point processes $P,\hat{P}\in\Gibbs$.
	\begin{enumerate}[label=\emph{(\roman*)}]
		\item 	We know from Proposition \ref{prop:diff:corrKS}, that the correlation functions $\rho^{(P)}$ and $\rho^{(\hat P)}$ both satisfy the Kirkwood--Salsburg equations \ref{eq:diff:KS}.
		\item By assumption, $\rho^{(P)}$ and $\rho^{(\hat P)}$ satisfy a Ruelle bound for the same $\constan$, and are therefore both elements of $\Banach$.
		\item For $z<\zCrit$, \ref{eq:diff:KS} has a unique solution, so that the correlation functions of $\hat P$ -- and therefore its factorial moment measures $(\alpha^{(\hat P)}_N)_N$ -- must coincide with those of $P$. 
		\item For any $N\geq 1$ and any bounded $\Gamma\subset\mathcal{E}$, we compute
	\begin{equation*}
	\begin{split}
		\alpha^{(P)}_N(\Gamma^N) &= \IE\left[\abs{\gamma_\Gamma}\big(\abs{\gamma_\Gamma}-1\big)\dots\big(\abs{\gamma_\Gamma}-N+1\big)\right]\\
		&= \int_{\Gamma^N} \rho_N(\x_1,\dots,\x_N)z^N\sigmab(d\x_1)\dots \sigmab(d\x_N)\\
		&\leq \int_{\Gamma^N} \prod_{i=1}^N(z\constan(\x_i)) \sigmab(d\x_1)\dots \sigmab(d\x_N) = (zc_\Gamma)^N,
	\end{split}
	\end{equation*}
	with $c_\Gamma\defeq \int_{\Gamma} \constan(\x) \sigmab(d\x)<+\infty$, since $\constan$ is locally integrable (under Assumptions \ref{hyp:diff:4}+\ref{hyp:diff:RBnonunif}).
	\end{enumerate}
	We can then conclude that $P = \hat{P}$ (see \cite{Lenard_1973}, and Proposition 4.12 of \cite{last_penrose_2017}); in other words, $\Gibbs=\{P\}$.
\end{proof}

\setcounter{Example}{1}
\begin{Excont}[continued]\label{ex:diff:end}
	Consider here a potential $\phi=\phi_{hc}$ with a pure hard core at some diameter $R>0$, i.e. $\phi_l\equiv 0$.
	Taking $a_0=R$ in the range Assumption \ref{hyp:diff:1} yields a path potential $\Phi$ (stable, with stability constant $\stabconst_\Phi = 0$) of the form
	\begin{equation*}
		\Phi(\x_1,\x_2) = (+\infty)\,\1_{[0,R)}\big(\inf_{s\in[0,1]} \abs{x_1+m_1(s)-x_2-m_2(s)} \big).
	\end{equation*}
	Under this interaction, two Langevin diffusions are forbidden from coming closer than $R$ to each other, at any given time $s\in[0,1]$. 
	
	For such a choice of $\Phi$ -- which satisfies Assumptions \ref{hyp:diff:0}+\ref{hyp:diff:1}+\ref{hyp:diff:2}+\ref{hyp:diff:RB} -- for any inverse temperature $\beta>0$ and for any activity $z<(e\RuelleC)^{-1}$, the Gibbs point process $P^{z,\beta}$ constructed in Theorem \ref{thm:diff:existence} is the unique element of $\GibbsT$.
\end{Excont}

\textbf{\sffamily Acknowledgements.} This work has been funded by Deutsche Forschungsgemeinschaft (DFG) - SFB1294/1 - 318763901 and Deutsch-Franz\"osische Hochschule (DFH) -- DFDK 01-18. The author would like to warmly thank Sylvie R{\oe}lly and Hans Zessin for the many delightfully fruitful discussions on the topic, as well as Suren Poghosyan for the careful reading and insightful comments. He also thanks the anonymous referee whose comments and questions greatly improved the presentation and redaction of this paper.

%\bibliographystyle{abbrv}
%\bibliography{References.bib}

\begin{thebibliography}{10}

\bibitem{ACK11}
S.~Adams, A.~Collevecchio, and W.~K{\"o}nig.
\newblock A variational formula for the free energy of an interacting
  many-particle system.
\newblock {\em Ann. Probab.}, 39(2), 2011.

\bibitem{kks_1998}
J.~L. Da~Silva, Y.~G. Kondratiev, and T.~Kuna.
\newblock Marked {G}ibbs measures via cluster expansion.
\newblock {\em Meth. Funct. Anal. Topol.}, 4(04):50--81, 1998.

\bibitem{davies_1989}
E.~W. Davies.
\newblock {\em Heat kernels and spectral theory}.
\newblock Cambridge University Press, 1989.

\bibitem{feynman}
R.~P. Feynman.
\newblock Atomic theory of the two-fluid model of liquid {H}elium.
\newblock {\em Phys. Rev.}, 94:262--277, Apr 1954.

\bibitem{fichtner_1991}
K.-H. Fichtner.
\newblock On the position distribution of the ideal {B}ose gas.
\newblock {\em Math. Nachr.}, 151(1):59--67, 1991.

\bibitem{ginibre}
J.~Ginibre.
\newblock {\em Some Applications of Functional Integration in Statistical
  Mechanics}.
\newblock Stat. Mech. Field Th. C. de Witt and R. Stora (eds), Gordon and
  Breach, 1971.

\bibitem{jansen_2019}
S.~Jansen.
\newblock Cluster expansions for {G}ibbs point processes.
\newblock {\em Adv. Appl. Probab.}, 51(4):1129--1178, 2019.

\bibitem{kac}
M.~Ka\v{c}.
\newblock On distributions of certain {W}iener functionals.
\newblock {\em Transactions of the American Mathematical Society}, 65(1):1--13,
  1949.

\bibitem{kavian_1993}
O.~Kavian, G.~Kerkyacharian, and B.~Roynette.
\newblock Quelques remarques sur l'ultracontractivit{\'e}.
\newblock {\em J. Funct. Anal.}, 111:155--196, 1993.

\bibitem{kuna_1999}
T.~Kuna.
\newblock Studies in configuration space analysis and applications.
\newblock {\em Bonner Math. Schr.}, 324, 1999.

\bibitem{last_penrose_2017}
G.~Last and M.~Penrose.
\newblock {\em Lectures on the Poisson Process}.
\newblock Institute of Mathematical Statistics Textbooks. Cambridge University
  Press, 2017.

\bibitem{Lenard_1973}
A.~Lenard.
\newblock Correlation functions and the uniqueness of the state in classical
  statistical mechanics.
\newblock {\em Commun. Math. Phys.}, 30:35--44, 1973.

\bibitem{malyshev_1980}
V.~A. Malyshev.
\newblock Cluster expansions in lattice models of statistical physics and the
  quantum theory of fields.
\newblock {\em Russian Math. Surveys}, 35(2):1--62, 1980.

\bibitem{malyshev_minlos_1991}
V.~A. Malyshev and R.~A. Minlos.
\newblock {\em Gibbs random fields: cluster expansions}, volume~44 of {\em
  Mathematics and its Applications}.
\newblock Springer, 1991.

\bibitem{minlos_poghosyan_1977}
R.~A. Minlos and S.~Poghosyan.
\newblock Estimates of {U}rsell functions, group functions, and their
  derivatives.
\newblock {\em Theor. Math. Phys.}, 31(2):408--418, 1977.

\bibitem{nehring_poghosyan_zessin_2012}
B.~Nehring, S.~Poghosyan, and H.~Zessin.
\newblock On the construction of point processes in statistical mechanics.
\newblock {\em J. Math. Phys.}, 54(6):063302, 2013.

\bibitem{nguyen_zessin_1979}
X.~X. Nguyen and H.~Zessin.
\newblock Integral and differential characterizations of the {G}ibbs process.
\newblock {\em Math. Nachr.}, 88(1):105--115, 1979.

\bibitem{ore_1995}
O.~Ore.
\newblock {\em Theory of graphs}.
\newblock American Math. Soc., 1967.

\bibitem{poghosyan_ueltschi_2009}
S.~Poghosyan and D.~Ueltschi.
\newblock Abstract cluster expansion with applications to statistical
  mechanical systems.
\newblock {\em J. Math. Phys.}, 50(5):053509, 2009.

\bibitem{poghosyan_zessin_2020}
S.~Poghosyan and H.~Zessin.
\newblock Penrose-stable interactions in classical statistical mechanics.
\newblock {\em Annales Henri PoincarÈ}, 2021.

\bibitem{rafler_thesis}
M.~Rafler.
\newblock {\em Gaussian loop- and {P}{\'o}lya processes: a point process
  approach}.
\newblock Doctoral thesis, Universit{\"a}t Potsdam, 2009.

\bibitem{roelly_zass_2020}
S.~R{\oe}lly and A.~Zass.
\newblock Marked {G}ibbs point processes with unbounded interaction: an
  existence result.
\newblock {\em J. Stat. Phys.}, 179(4):972--996, 2020.

\bibitem{royer_2007}
G.~Royer.
\newblock {\em An initiation to logarithmic Sobolev inequalities}.
\newblock American Mathematical Society, 2007.

\bibitem{ruelle_1963}
D.~Ruelle.
\newblock Correlation functions of classical gases.
\newblock {\em Ann. Phys.}, 25(1):109--120, 1963.

\bibitem{ruelle_1969}
D.~Ruelle.
\newblock {\em Statistical mechanics: rigorous results}.
\newblock W. A. Benjamin, Inc., New York-Amsterdam, 1969.

\bibitem{ruelle_1970}
D.~Ruelle.
\newblock Superstable interactions in classical statistical mechanics.
\newblock {\em Comm. Math. Phys.}, 18(2):127--159, 1970.

\bibitem{ueltschi_06}
D.~Ueltschi.
\newblock Feynman cycles in the {B}ose gas.
\newblock {\em J. Math. Phys.}, 47(12):123303, 2006.

\end{thebibliography}

\end{document}